\documentclass[11pt]{amsart}
\usepackage{graphicx} 
\usepackage{amsfonts,amsmath,amssymb,amsthm,mathrsfs,mathdots}
\usepackage[left=3cm, top=2cm, bottom=2cm, right=3cm]{geometry}
\usepackage{tikz}
\usetikzlibrary{arrows.meta,bending,positioning,calc}

\usepackage[english]{hyperref}

\DeclareMathOperator{\trop}{trop}
\DeclareMathOperator{\Gr}{Gr}
\DeclareMathOperator{\pr}{pr}
\DeclareMathOperator{\val}{val}
\DeclareMathOperator{\cone}{cone}
\DeclareMathOperator{\sign}{sign}

\newcommand\barRR{\overline{\RR}}
\newcommand\flag{\mathcal{F}l}
\newcommand\shortFlag{\mathcal{F}l_{\text{short}} }
\newcommand\TT{{\mathbb T}}
\newcommand\PP{{\mathbb P}}

\newcommand\ZZ{{\mathbb Z}}
\newcommand\RR{{\mathbb R}}
\newcommand\CC{{\mathbb C}}
\newcommand\puiseux{\CC\{\!\!\{t\}\!\!\} }
\newcommand*{\1}{\text{\usefont{U}{bbold}{m}{n}1}}

\theoremstyle{plain}
    \newtheorem{theorem}{Theorem}
    \newtheorem{theoremalph}{Theorem}
    
    \newtheorem{corollary}[theorem]{Corollary}
    \newtheorem{lemma}[theorem]{Lemma}
    \newtheorem{proposition}[theorem]{Proposition}
    \newtheorem*{convention*}{Convention}
\theoremstyle{definition}
    \newtheorem{remark}[theorem]{Remark}
    \newtheorem{example}[theorem]{Example}
    \newtheorem{definition}[theorem]{Definition}

\title{Tropicalization and degeneration of short flag varieties and their fibers}
\author{Hannah Markwig, Michael Schl\"o\ss er}
\address[Hannah Markwig]{
    Universit\"at T\"ubingen,
    Fachbereich Mathematik,
    Auf der Morgenstelle 10,
    72076 T\"ubingen}
\email{hannah@math.uni-tuebingen.de}
\address[Michael Schl\"o\ss er]{
    RWTH Aachen, 
    Lehrstuhl f\"ur Algebra und Darstellungstheorie, 
    Pontdriesch 10-12, 52062 Aachen}
\email{schloesser@art.rwth-aachen.de}

\begin{document}
\begin{abstract}
    Tropicalizations of Grassmannians resp.\ flag varieties parametrizing (flags of) linear spaces have been studied intensely and reveal rich combinatorial structure. Linear degenerations of flag varieties provide ways of interpolating between products of Grassmannians and flag varieties. Tropical versions of these have been introduced recently \cite{BS23}, but their combinatorial description and computation remains a challenge. We consider a particular case which allows the use of matroidal methods, namely linear degenerate short flag varieties parametrizing tuples of linear spaces such that a projection of one is contained in the other. Their fibers turn out to be linear spaces themselves under mild assumptions, which allows a comprehensive combinatorial description of their tropicalizations, and accordingly, of tropicalizations of linear degenerate short flag varieties.
\end{abstract}

\subjclass{14T15, 14T20, 05B35, 14M15}
\keywords {Grassmannian, Matroid, Flag variety, Linear Degeneration, Tropicalization}

\maketitle

\section{Introduction}\label{section: 1 - intro}
Tropicalization can be viewed as a degeneration process which enables an exchange of methods between combinatorics and algebraic geometry.

Tropicalizations of linear spaces are the building blocks to understand smooth tropicalized varieties. They can be studied in terms of their underlying (valuated) matroid. Accordingly, the study of tropicalized linear spaces and their parameter spaces, the tropical Grassmannians, form a rich and intense area of study over recent years (see e.g.\ \cite{SS04a, Spe08, Rin13, Mun, BKKUV, SW21}).

Tropical Grassmannians, and, more generally, tropical flag varieties parametrizing flags of tropicalized linear spaces are in general very hard to compute, and understanding their combinatorial properties remains a major challenge for the coming years \cite{BBRS}.

Linear degenerate flag varieties arise from ordinary flag varieties by replacing inclusions for arbitrary linear maps, and have been studied extensively for example in \cite{Feigin2012}, \cite{CFFFM17} and \cite{CFFFM19}.
%
%
%
%
From our point of view they offer an interpolation between flag varieties and products of Grassmannians (whose tropicalizations are better understood compared to flag varieties), thus it is only natural to consider their tropicalization. From a theoretical perspective, this has been done previously in \cite{BS23}. Nevertheless, explicitly computing such tropicalizations and understanding their combinatorics remains difficult.

In this article, we approach this problem in the context of short linear degenerate flag varieties, by first considering their fibers. 
Given $S \subseteq [n]$, the short linear degenerate flag variety $\shortFlag^S$ parametrizes tuples $Y, X \subseteq \PP^{n-1}$ of $d-1$, resp.\ $d$-dimensional linear subvarieties such that $Y$ is contained in $X$ after projection onto the coordinates outside $S$.

Fixing $X$, and collecting all $Y$ from above, leads to the moduli space $L^S(X)$ of $S$-degenerate codimension-one subspaces of $X$. 
When $S$ is empty, it is shown in \cite{JMRS} that $L(X) = L^\emptyset(X)$ turns out to be linear itself, viewed as subvariety of $\PP^{\binom{n}{d}-1}$. This is desirable, because then tropicalization can again be studied in terms of tropicalized linear spaces.
In Theorem \ref{theorem: L^S(X) = L(X^S)} we show that, under some mild assumptions, this statement extends.

\begin{theoremalph}\label{theorem: A}
    Suppose $X \leq K^n$ is of dimension $d+1$ and $S \subset [n]$ are such that there is a non vanishing Plücker coordinate $q_C$ of X, whose index set $C$ contains $S$. Then $L^S(X) = L(X^S)$ for a suitable vector space $X^S$ implying that $L^S(X)$ is a linear variety of dimension $d$. 
\end{theoremalph}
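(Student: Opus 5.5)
We first fix conventions. Write $\pr_S\colon K^n \to K^n$ for the coordinate projection that kills the coordinates in $S$. Then $L^S(X)$ is the set of $d$-dimensional $Y \leq K^n$ (as points of $\Gr(d,n) \subseteq \PP^{\binom{n}{d}-1}$) with $\pr_S(Y) \subseteq X$. By \cite{JMRS}, for a vector space $Z$ of dimension $d+1$, $L(Z)$ is the set of $d$-dimensional subspaces of $Z$. It is cut out linearly in Plücker coordinates by the relations $\sum_{j} \pm q_{Cj\setminus\{i\}}\,p_{\dots}=0$, i.e. the incidence relations $p_I q_J$ of Grassmann–Plücker type with $q$ fixed, and it is a $\PP^d$.

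The plan is to construct $X^S$ explicitly. Set
\[
X^S := \pr_S^{-1}(X) \cap W,
\]
where $W$ is a complement chosen using $C$. More concretely, since $q_C \neq 0$ and $S\subseteq C$, the projection $X \to K^C$ is an isomorphism. Hence $X$ is the graph of a linear map $K^C \to K^{[n]\setminus C}$. The condition $\pr_S(Y)\subseteq X$ only constrains the coordinates outside $S$. I would define $X^S$ as the $(d+1)$-dimensional space obtained from $X$ by rescaling/modifying the $S$-coordinates. A candidate is to take $X^S = X$ itself when $S \subseteq C$. However, $Y$ may then have arbitrary $S$-coordinates, so $\{Y : \pr_S Y\subseteq X\}$ is bigger than $\{Y\subseteq X\}$ as sets of subspaces. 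It is therefore not linear of dimension $d$ in general, unless the dimension count works out.

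The key step is to understand this precisely. I will count: $\pr_S^{-1}(X)$ has dimension $d+1+|S|$, and the $Y$'s of dimension $d$ inside it form a Grassmannian. The claim $\dim L^S(X)=d$ suggests that $L^S(X)$ is defined in \emph{Plücker} terms, via the degenerate incidence relations $\sum_j \pm p_{I\cup j}\, q_{J\setminus j}$ with $j$ ranging only over $[n]\setminus S$. These relations are linear in $p$, so $L^S(X)$ is the linear space they cut out, and the content of the theorem is to identify this linear space.

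So I would proceed as follows. (1) Write the relations defining $L^S(X)$ as a linear system in $p$ with coefficients $q_J$, where the summation index avoids $S$. (2) Use $q_C\ne0$ to normalize $q_C=1$ and pass to the reduced row-echelon form of $X$ with pivots $C$. (3) Define $X^S$ as the row space of the matrix obtained from this echelon form by zeroing the entries in columns $S$ outside the pivot positions. Equivalently, $X^S$ has Plücker vector $q^S_J = q_J$ if $J\supseteq S$ and $0$ otherwise. Verify this is a valid Plücker vector: it is the contraction/extension $X^S = (X/ S\text{-part})\oplus$, so that its matroid is obtained from that of $X$ by making $S$ coloops after contracting. (4) Show the relations defining $L^S(X)$ coincide, up to invertible linear change, with those defining $L(X^S)$. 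In the relation with index set $J$ not containing $S$, the terms $q_{J\setminus j}$ with $j\in S$ are missing in the degenerate version. This matches $q^S$ vanishing on sets missing an element of $S$.

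The main obstacle is step (4), in particular showing that relations with $J\not\supseteq S$ in the degenerate system are implied by (or equivalent to) those of $L(X^S)$. I would handle it by expanding everything in the echelon coordinates around $C$, where the $q_J$ become minors of the matrix $A$. In these coordinates both systems have rank $\binom{n}{d}-d-1$, so it is enough to show one inclusion, and the dimension $d$ follows from \cite{JMRS} applied to $X^S$.
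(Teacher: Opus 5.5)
\textbf{Verdict: there is a genuine gap.} Nothing in your proposal actually proves that $L^S(X)$ is linear of dimension $d$.
\begin{itemize}
\item By Definition \ref{definition: L^S spaces}, $L^S(X)$ is the intersection of (the projectivization of) $\ker U^S$ with $\Gr(d,n)$. Your sentence ``these relations are linear in $p$, so $L^S(X)$ is the linear space they cut out'' silently discards the Plücker relations. That they are superfluous is precisely the nontrivial point. The paper derives it from Theorem \ref{theorem: algebraic statement} via Corollary \ref{corollary: L^S(X) is linear}, and Lemma \ref{lemma: need pl.rels if |S| >= d+2} shows it fails for $|S|\ge d+2$, where the hypothesis on $C$ cannot hold.
\item Step (4), which you rightly call the heart of the matter, is only announced. ``Expanding in echelon coordinates'' is not an argument. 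The claim that $U^S(q)$ has rank $\binom{n}{d}-d-1$ is equivalent to $\ker U^S$ being $(d+1)$-dimensional, i.e.\ to the conclusion itself.
\item Step (3) is wrong as stated. Since $S\subseteq C$, the columns indexed by $S$ are pivot columns, so zeroing their non-pivot entries changes nothing. You get back $X$, with Plücker vector $q$ rather than $q^S$. What produces $q^S$ is replacing the \emph{rows} indexed by $s\in S$ by $e_s^T$, which is the paper's matrix $W^S$.
\end{itemize}

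\textbf{The root cause is your dimension count.} One has $\pr_S^{-1}(X)=(X\cap\pr_S(K^n))\oplus\langle e_s : s\in S\rangle$. Because $X\to K^C$ is an isomorphism and $S\subseteq C$, the coordinates indexed by $S$ are independent functionals on $X$. Hence $\dim(X\cap\pr_S(K^n))=d+1-|S|$ and $\dim\pr_S^{-1}(X)=d+1$, not $d+1+|S|$, so no intersection with a complement is needed. In fact $\pr_S^{-1}(X)$ is exactly the row space of $W^S$: the rows $W_{i,-}$ with $i\in C\setminus S$ vanish in the columns indexed by $S$ and form a basis of $X\cap\pr_S(K^n)$. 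Since $\pr_S(Y)\subseteq X$ if and only if $Y\subseteq\pr_S^{-1}(X)$, your very first candidate already gives $L^S(X)=L(\pr_S^{-1}(X))$. Linearity and dimension $d$ then follow from Theorem \ref{theorem: L(X) is linear + V \& U}.

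Your step (4) can also be completed. The nonzero rows of $U(q^S)$ are exactly the rows $(A,B)$ of $U^S(q)$ with $S\subseteq B$, so $\ker U^S(q)\subseteq\ker U(q^S)=L(X^S)$. Conversely, every point of $L(X^S)$ is the Plücker vector of some $Y\subseteq X^S$; such $Y$ satisfies $\pr_S(Y)\subseteq X$ and so lies in $\ker U^S(q)$. Hence $\ker U^S(q)=L(X^S)\subseteq\Gr(d,n)$ and $L^S(X)=L(X^S)$.

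Both routes avoid Theorem \ref{theorem: algebraic statement}. The paper instead takes linearity and $\dim L^S(X)\le d$ from Corollary \ref{corollary: L^S(X) is linear}. It then checks that the $d+1$ spaces spanned by all but one row of $W^S$ lie in $L^S(X)$ and span $L(X^S)$.
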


We furthermore extend the explicit descriptions of $L(X)$ as rowspace respectively kernel of suitable matrices found in \cite{JMRS} (see chapter \ref{section: 2 - L^S(X)}).

Whenever one investigates a vector space it is natural to study its associated linear matroid as well.
In \cite{JMRS}, the linear matroid of $L(X)$ is studied, especially for generic $X$.
We continue this study in Proposition \ref{proposition: matroid of L^S(X)}, where we show that linear degeneration leads to a direct sum decomposition of associated linear matroid.
For generic $X$, this leads to a complete description of the matroid associated to $L^S(X)$. 

Ultimately, our study of the matroid of $L^S(X)$ sets up the investigation of $\trop L^S(X)$, whose points describe tropicalized linear spaces such that their $S$-projection, defined by setting the coordinates indexed by $S$ to $\mathcal{\infty}$, is contained in $\mathcal{X} = \trop(X)$.

When $L^S(X)$ is linear, $\trop L^S(X)$ depends solely on its associated (valuated) matroid and if $X$ happens to have constant coefficients, the tropicalization of $L^S(X)$ arises as the (extended) Bergman fan of its matroid. 
For the general, non-constant coefficient case we describe the Plücker coordinates of $L^S(X)$ (and thus its valuated matroid) in terms of oriented hypergraphs. 
Note that since $L^S(X)$ is a linear subspace of $\PP^{\binom{n}{d}-1}$, its Plücker coordinates are indexed by a set of $d$-subsets. 
Thus it is only natural to interpret labels as hypergraphs.
Roughly speaking, a hypergraph can be oriented by picking a target inside each of its hyperedges. 
The remaining vertices of an edge are called sources, and any vertex appearing as a source of some hyperedge, but never as a target, is called a root. 
The Theorem below summarizes our description of Plücker coordinates (see chapter \ref{section: 2 - L^S(X)} for more details and notation).
\begin{theoremalph}\label{theorem: B}
    Let $X \leq K^n$ and $S \subseteq [n]$ as in Theorem \ref{theorem: A}.
    Let $J_1, \ldots, J_{d+1} \in \binom{[n]}{d}$ be distinct and sorted in lexicographic order. Let $H$ be the hypergraph on $[n]$ with hyperedges $h_i = [n] \setminus J_i$. Then the Plücker coordinate $p_{J_1 \ldots J_{d+1}}$ of $L^S(X)$ equals
    %
    \begin{equation*}
        p_{J_1 \ldots J_{d+1}} = 
        \begin{cases}
            \sum_{\mathcal{O}} \sign(I_0, \mathcal{O}) \prod_{i \neq i_0} q_{[n] \setminus s_{\mathcal{O}}(h_i)} & \text{if $S$ consists of leaves} \\
            0 & \text{otherwise}.
        \end{cases}
    \end{equation*}
    The sum runs over all orientations $\mathcal{O}$ of $H$ with roots given by $I_0$,
    where ${i_0} \in [d+1]$ and $I_0 \subset h_{i_0}$ of size $n-d-1$ can be chosen freely.
\end{theoremalph}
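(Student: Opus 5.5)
We would reduce Theorem \ref{theorem: B} to the case $S=\emptyset$ by means of Theorem \ref{theorem: A}. Since $L^S(X)=L(X^S)$, the Plücker coordinates of $L^S(X)$ are those of $L(X^S)$, once we express the Plücker coordinates $q^S$ of $X^S$ through those of $X$. By construction, $X^S$ is obtained from $X$ by forcing the coordinates in $S$ to become free directions. The hypothesis that some $q_C\neq 0$ with $S\subseteq C$ lets us choose a basis adapted to $S$. The expected outcome is that $q^S_D$ vanishes unless $D\supseteq S$, and is (up to a common scalar) $q_D$ when $D\supseteq S$. The first step is therefore to write this description of $q^S$ down explicitly.

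The second step treats $S=\emptyset$. Here $L(X)$ is given in \cite{JMRS} (and in the paper's matrix description) as the rowspace of a matrix with rows indexed by $(d+1)$-subsets $C$ and columns indexed by $d$-subsets $J$. Its entries are $\pm q_C$ when $J\subset C$, with the sign determined by the position of the removed element $C\setminus J$. The Plücker coordinate $p_{J_1\ldots J_{d+1}}$ is a maximal minor on the columns $J_1,\ldots,J_{d+1}$, taken after fixing a basis of the row space. Passing to complements, $h_i=[n]\setminus J_i$ has size $n-d$, and a row $C$ with $J_i\subset C$ corresponds to deleting one vertex $v\in h_i$, so that $[n]\setminus C=h_i\setminus\{v\}$. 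Choosing that deleted vertex is exactly choosing a target of $h_i$, and $h_i\setminus\{v\}$ is its source set $s_{\mathcal O}(h_i)$. I would pick a basis of rows by fixing $i_0$ and the set $I_0\subset h_{i_0}$ of size $n-d-1$. Expanding the determinant via Leibniz or Cauchy–Binet, each nonzero term becomes an assignment of targets to the hyperedges $h_i$ with $i\neq i_0$, that is, an orientation. The compatibility conditions force the vertices never used as targets to be exactly $I_0$, so $I_0$ is the root set. This yields $\sum_{\mathcal O}\sign(I_0,\mathcal O)\prod_{i\neq i_0}q_{[n]\setminus s_{\mathcal O}(h_i)}$. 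The independence of the result from the choice of $(i_0,I_0)$ follows because different choices give the same Plücker vector, up to the projective scalar that we normalise away.

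The third step substitutes $q^S$. A term survives only if every $[n]\setminus s_{\mathcal O}(h_i)$ contains $S$, that is, no element of $S$ is ever a source. For $s\in S$ this means $s$ appears only as a target, and such vertices are leaves in the paper's terminology. If some $s\in S$ is not a leaf of $H$, then every orientation uses it as a source somewhere, so every term vanishes and $p=0$. Otherwise the surviving terms are exactly the orientations in the stated sum, with $q$ in place of $q^S$.

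The main obstacle is the sign bookkeeping in step two. We must verify that the determinant expansion gives precisely $\sign(I_0,\mathcal O)$ as the paper defines it, and that orientations not rooted at $I_0$ cancel or do not occur. I would handle this by first computing the sign for a single transposition of targets and then arguing inductively over orientations that differ along a directed path.
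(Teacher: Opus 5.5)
Your skeleton is the paper's: reduce to $S=\emptyset$ via $L^S(X)=L(X^S)$, read a maximal minor as a sum over orientations, then substitute $q^S$ and use leaves. Step two, however, does not work as you describe it.

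\textbf{The matrix is wrong.} You take rows indexed by all $(d+1)$-sets $C$, with entries $\pm q_C$ for $J\subset C$. Rescaling each row by $q_C^{-1}$ shows that for generic $X$ this row space does not depend on $X$ at all. Moreover, its Leibniz terms (distinct rows $J_l\cup v_l$) do not force the targets $v_l$ to be distinct. The matrix $V$ of \eqref{equation: formula for V} has rows indexed by the single vertices $i$ of one fixed set $C_0=[n]\setminus I_0$ with $q_{C_0}\neq 0$, and $V_{i,J}=(-1)^{|[i]\cap J|}q_{J\cup i}$. With this indexing the row label \emph{is} the target. A permutation therefore assigns distinct targets exhausting $C_0$, and the roots are exactly $I_0$ once $\bigcup_l h_l=[n]$. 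If $\bigcup_l h_l\neq[n]$, the minor vanishes for every $X$, because the $h_l$ are dependent in $\text{D}_{n-d}(U_{n,n})$ (Theorem~\ref{theorem: matroid of L(X)}). So you need neither a cancellation argument nor a path-by-path sign induction. For fixed $I_0$, the Leibniz signs equal $\varepsilon_{\mathcal O}$ up to a factor depending only on $I_0$. Note also that all $d+1$ hyperedges, $h_{i_0}$ included, receive a target. Since $I_0\subset h_{i_0}$ forces $s_{\mathcal O}(h_{i_0})=I_0$, every term carries the common factor $q_{[n]\setminus I_0}$.

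\textbf{The real gap is the dependence on $(i_0,I_0)$, which you dismiss.} Because $I_0\subset h_{i_0}$ depends on $J$, you use different row bases for different coordinates. The $I_0$-expansion equals $\lambda(I_0)\,p_J$, where the scalar $\lambda(I_0)$ is independent of $J$ but depends on $I_0$. The vector assembled from $J$-dependent choices is therefore the Plücker vector only if each entry is divided by exactly $\lambda(I_0(J))$, up to one global constant. The case $S=\emptyset$ of the theorem is precisely the statement $\lambda(I_0)=c\,(-1)^{I_0}q_{[n]\setminus I_0}$. This is what justifies dropping the $i_0$-th factor, and what produces the sign $\sign(I_0,\mathcal O)=(-1)^{I_0}\varepsilon_{\mathcal O}$; nothing in your argument generates it.

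The paper proves this in Lemma~\ref{lemma: Plücker coords of L(X) changing I_0}. For neighbouring $I_0=Aa_i$, $I_0'=Aa_j$ it evaluates both expansions on the test hypergraph $h_{d+2-l}=Aa_la_{l+1}$, where each root set admits a unique orientation, and compares signs. It then connects arbitrary $I_0,I_0'$ by basis exchange in the dual matroid of $X$.

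Alternatively, the rows $V_{i,-}$ for all $i\in[n]$ satisfy exactly the linear relations $y\in X^\perp$ (interior product with $y$). Hence $\det V_{C_0,J}$ (rows $C_0$, columns $J_1,\dots,J_{d+1}$) equals $\mu\,q_{C_0}\,p_J$ with $\mu$ independent of $C_0$ and $J$. The sign $(-1)^{I_0}$ then arises, up to a global sign, when converting Leibniz signs into $\varepsilon_{\mathcal O}$.

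\textbf{A smaller point in step three.} You apply the formula to $q^S$ with arbitrary $I_0$, including $I_0\cap S\neq\emptyset$. There $q^S_{[n]\setminus I_0}=0$ and $C_0$ is not a row basis. This is legitimate only because, after dividing by $q_{[n]\setminus I_0}$, the formula is a polynomial identity on all of $\Gr(d+1,n)$, extended from the dense generic locus. You need to say so explicitly.
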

In the special case $d+2=n$, the hypergraphs above reduce to oriented graphs. This is to be expected, since in this case, for sufficiently generic $X$ the matroid associated to $L(X)$ is isomorphic to the graphical matroid of the complete graph on $n$ vertices.
Lastly, we also investigate how to obtain its valuated circuits in terms of oriented hypergraphs through Proposition \ref{proposition: valuated circuits of L^S(X)} and the subsequent discussion.

To come back to short linear degenerate flag varieties again, we need to vary $X$.
We place a particular focus on the case where $d+2=n$, where we can give a complete description of the fan structure (in the different strata) of $\trop( \shortFlag^R)$.
 We show in Theorem \ref{theorem: trop (degen) short flag} that $\trop \shortFlag^R$ is built from tropicalizations of $L^S$-spaces as well as Grassmannians. Here our earlier observation $L^S(X) = L(X^S)$ plays a central role, as it allows for the case distinction necessary to derive our result.
In Theorem \ref{theorem: bergman fan for trop short flags} and the subsequent discussion we thoroughly describe the fan structure in each cell of $\trop \shortFlag^R$. Outside of cells containing a Grassmannian, the cones making up these fans  come from tuples of chains of flats. Depending on the precise cell these flats live in certain summands of the matroid associated to a suitable $L^S$-space.
Our results regarding $\trop( \shortFlag^R)$ can be summed up as follows (see Section \ref{section: 4 - short flags}):

\begin{theoremalph}\label{theorem: C}
    Suppose $d+2 = n$ and $R \subseteq [n]$. 
We can describe $\trop( \shortFlag^R)$ completely in terms of tropicalized $L^S(X)$-spaces and tropical Grassmannians. More precisely:
    
    Define $w^S$ by setting $w^S_I = 0$ whenever $S \subseteq I$, and otherwise set $w^S_I = \infty$.
    Then $\shortFlag^R$ tropicalizes to the disjoint union 
    \begin{align*}
        \trop( \shortFlag^R) = 
        &\bigcup_{R^C \subseteq S \subset [n]} ~ \trop (\Gr(n-2,n)) \times (\RR^{\binom{[n]}{n-1}}/\RR + w^S) ~~ \cup \\
        &\bigcup_{R^C \nsubseteq S \subset [n]} ~ A_S + \trop(L^{R \cup S}(X)) \times w^S.
    \end{align*}
    Here, $A_S$ is the subspace of $\RR^{\binom{[n]}{n-2}}/\RR \times \RR^{\binom{[n]}{n-1}}/\RR$
    generated by $\langle a_i \times e_{[n] \setminus i} ~:~ i \notin S \rangle_\RR, $ 
    with $(a_i)_J = |\{i\} \setminus J|$.
    Moreover, each nonempty stratum of $\trop (\shortFlag^R)$ has the structure of a fan arising either from phylogenetic trees or as the Bergman fan of a suitable matroid.
\end{theoremalph}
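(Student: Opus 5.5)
We prove Theorem C by decomposing $\trop(\shortFlag^R)$ according to the support pattern of the second factor $\mathcal{X}=\trop(X)\in\trop\Gr(n-1,n)$. Since $d+2=n$, the space $X$ is a hyperplane (or a $(d+1)$-dimensional space with $d+1=n-1$) with Plücker vector $(q_{[n]\setminus i})_i$. Its tropicalization lies in the boundary stratum where exactly the coordinates $q_{[n]\setminus i}$ with $i\in S$ vanish; here $S\subset[n]$ is proper, since a Plücker vector is nonzero. This stratum is recorded by the translate $w^S$.

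\textbf{Stratification and fibration.} I would first note that $\shortFlag^R\to\Gr(n-1,n)$, $(Y,X)\mapsto X$, has fiber $L^R(X)$, and that tropicalization commutes with intersecting with torus strata. This gives
$\trop\shortFlag^R=\bigsqcup_S \trop(\shortFlag^R\cap\{\text{stratum } S\})$.
On the $S$-stratum, every $X$ is of the form $X=t\cdot X_0$, with $X_0$ the coordinate-type hyperplane whose normal vector is supported on $[n]\setminus S$ and $t$ in the torus acting on coordinates $i\notin S$. The torus action shifts the $Y$-Plücker coordinates by $a_i$ and the $X$-coordinate $e_{[n]\setminus i}$ simultaneously. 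This is exactly where the lineality space $A_S$ comes from. The stratum is therefore $A_S+\trop(\text{fiber over a fixed } X_0)\times w^S$.

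\textbf{Identifying the fibers.} The condition that $Y$ projected away from $R$ lies in $X_0$ only involves coordinates outside $R$. Since the normal of $X_0$ is supported on $[n]\setminus S$, the fiber equals $L^{R\cup S}(X_0)$ as a set, after checking that the condition ``$\pi_{R^C}(Y)\subseteq X_0$'' is equivalent to ``$\pi_{(R\cup S)^C}(Y)\subseteq\pi(X_0)$''. Now split into two cases.

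If $R^C\subseteq S$, then $R\cup S=[n]$ and the condition is vacuous, so the fiber is all of $\Gr(n-2,n)$. This gives the first union, and its fan structure is the space of phylogenetic trees.

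If $R^C\not\subseteq S$, I would verify the hypothesis of Theorem A, namely that some nonvanishing $q_C$ of $X_0$ has $C\supseteq R\cup S$. Since $X_0$ is a hyperplane, $C=[n]\setminus i$ for any $i\notin S$, and we may pick $i\in R^C\setminus S$. Theorem A then gives $L^{R\cup S}(X_0)=L(X_0^{R\cup S})$, which is linear. Its tropicalization is therefore the Bergman fan of its matroid, with constant coefficients since $X_0$ can be taken to have $0/1$ entries. Proposition \ref{proposition: matroid of L^S(X)} supplies the direct-sum decomposition used to describe the fan through chains of flats in the summands.

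\textbf{Main obstacle.} The delicate step is justifying that tropicalizing each stratum gives exactly $A_S+\trop(\text{fiber})\times w^S$. This requires the stratum to be the torus-orbit product $T\cdot(\text{fiber}\times\{X_0\})$ with $T$ acting freely on the $X$-part, so that valuations split as a sum. To do this I would write an explicit isomorphism of the stratum with $T'\times L^{R\cup S}(X_0)$ and use that $\trop$ of a product with a torus equals the sum with the image of its cocharacter lattice. This step also requires checking that the $a_i$ are the correct weights: scaling coordinate $i$ multiplies $p_J$ by $t_i$ exactly when $i\in J$, which after projective normalization matches $(a_i)_J=|\{i\}\setminus J|$ up to sign and convention.
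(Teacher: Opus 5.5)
For the displayed decomposition you follow the paper's route. Each hyperplane in the $S$-stratum is a torus translate of $X^S$ (your $X_0$), and the fiber over it is $L^R(X^S)=L^{R\cup S}(X)$. Theorem~\ref{theorem: L^S(X) = L(X^S) in cocodim 1} makes this fiber linear exactly when $R^C\not\subseteq S$, and it is all of $\Gr(n-2,n)$ otherwise. The tropicalized torus action then produces $A_S$.

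The paper justifies the key step by taking valuations of $T_S\cdot\bigl((L^{R\cup S}(X)\cap\mathcal O_F)\times q^S\bigr)$ and noting that the right-hand side is a finite union of closed cones. Your product isomorphism plus functoriality of $\trop$ is a valid alternative, with one caveat. A torus acting freely on the $X$-part must be cut down, e.g.\ $t_i=1$ for $i\in S$ and $t_{i_0}=1$ for a fixed $i_0\notin S$. Its tropicalization $A'$ is then in general of codimension one in $A_S$. Modulo infinite coordinates and the two scalings, the missing direction is $e_{E^R_S}\times 0$ (Lemma~\ref{lemma: A_S up to infinities} with $E_S$ replaced by $E^R_S$). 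To conclude $A'+\trop(L^{R\cup S}(X))\times w^S=A_S+\trop(L^{R\cup S}(X))\times w^S$, you must note that $E^R_S$ is a union of direct summands of $\mathcal M(L^{R\cup S}(X))$.

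There is also a small slip in your fiber identification. The image $\pi(X_0)$ is the whole coordinate subspace once $R\not\subseteq S$. The equivalence you actually need is $\pr_R(Y)\leq X_0\iff\pr_{R\cup S}(Y)\leq X_0$, which holds because the equation of $X_0$ involves no coordinate in $S$.

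\textbf{The gap: the fan-structure claim for $R^C\not\subseteq S$.} You exhibit a fan structure on $\trop(L^{R\cup S}(X))$, but the stratum of $\trop(\shortFlag^R)$ is $A_S+\trop(L^{R\cup S}(X)\cap\mathcal O_F)\times w^S$. For $S\neq\emptyset$, the direction $e_{E^R_S}\times 0$, which lies in $A_S$ up to infinite coordinates, is a ray of the fine Bergman subdivision rather than a lineality direction. Consequently the sets $A_S+C_{G_\bullet}\times w^S$, for chains $G_\bullet$ of $\mathcal M(L^{R\cup S}(X))$, do not form a fan. They can coincide (Example~\ref{example: different chains give same cone}), and they can even be properly nested in the same dimension.

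For a concrete case take $n=5$, $R=\emptyset$, $S=\{1,2\}$, and $F_1=\{123\}$, a single element of $E_S$. Compare the chains $\{345\}\subset F_1\cup\{345\}\subset F_1\cup E_{S,1}\cup\{345\}$ and $\{345\}\subset F_1\cup\{345\}\subset E_S\cup E_{S,1}\cup\{345\}$. With $\infty$ in the coordinate $345$, they give $A_S+\cone(e_{F_1},e_{F_1}+e_{E_{S,1}})\times w^S\subsetneq A_S+\cone(e_{F_1},e_{E_{S,1}})\times w^S$.

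Pointing to chains of flats in the summands does not settle this until you say which summands and verify the fan property. The paper groups them as $\mathcal M_S^R=\mathcal M_{R\cup S}\oplus\bigoplus_{a\in R\setminus S}\mathcal M_{R\cup S,a}$ and $\widehat{\mathcal M}_S^R=\bigoplus_{a\in S}\mathcal M_{R\cup S,a}$. This grouping makes $e_{E^R_S}$ and $e_{\widehat E^R_S}$ exactly the scalings absorbed by $A_S$. The paper then shows (Theorem~\ref{theorem: bergman fan for trop short flags} and the discussion after it) two things: each $A_S+C_{G_\bullet}\times w^S$ lies in some $C^{R,S}_{F_\bullet,\widehat F_\bullet}$, and these cones, in bijection with pairs of chains, form a fan.

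Alternatively, your cut-down torus gives a non-canonical fan: transport $\RR^{n-|S|-1}$ times the Bergman fan through the tropicalized isomorphism. Either route closes the gap, but you have to state one of them. Similarly, for $R^C\subseteq S$ the boundary strata $\trop(\Gr_M)$ need the multilabeled phylogenetic trees of \cite{Cueto20}, not only the phylogenetic-tree fan of $\trop\Gr^0(n-2,n)$.
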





Notice that, following \cite{BS23}, both the tropicalization of a fiber $\trop(L^R(X))$ as well as the tropicalization of a short linear degenerate flag variety parametrize tuples of tropicalized linear spaces $(\trop(Y), \trop(X))$ such that $\pr_R(\trop(Y)) \subset \trop(X)$. We discuss several examples of tropicalizations and their property as parameter spaces.

We remark that understanding the tropicalizations of fibers of a short (linear degenerate) flag variety can also be viewed as a solution to the so-called codimension one relative realizability problem (see e.g.\ \cite{BK, BS, BGS, Gei}): there, we fix $X$ of dimension $d+1$ and ask: for which tropical linear spaces $\mathcal{Y}$ contained (resp.\ after projection) in $\trop(X)$ does there exist a $d$-dimensional linear space $Y$ with $\pr_R(Y)\subset X$ and $\trop(Y)=\mathcal{Y}$?
The answer is: all $\mathcal{Y}$ which appear in the tropicalization of $L^R(X)$. 
In this sense, our study can be viewed as an answer to the relative realizability problem in this situation. \\

The structure of the paper can be summarized as follows. The main body consists of three parts, Section \ref{section: 2 - L^S(X)}, \ref{section: 3 - Tropicalizations} and \ref{section: 4 - short flags}. 
The first part, Section \ref{section: 2 - L^S(X)}, focuses on the investigation of the locus $L^S(X)$ of $S$-degenerate codimension-one subspaces. 
After showing that this space is indeed linear, provided certain Plücker coordinates of $X$ do not vanish, we go on to describe its (valuated) matroid. 
In Subsection \ref{section: 2 - preliminaries} we recall Grassmannians, Plücker relations as well as incidence relations.
In Subsection \ref{section: 2 - Defs and empty S} we properly define the space $L^S(X)$, and collect the needed results from \cite{JMRS} on the case of $S = \emptyset$.
Subsection \ref{section: 2 - cocodim 1} deals with the special case of $X$ having codimension-one in its ambient space. 
Not only can we furnish a complete description in this case, which we dub cocodim $1$, but we shall find ourselves continuously coming back here to look for inspiration on how to deal with the higher codimension case. A special feature of cocodim $1$ is we require essentially only linear algebra to understand it.
The case of $X$ having arbitrary codimension is discussed in Subsection \ref{section: 2 - higher codim}. 
Contrary to cocodim $1$ some algebraic arguments are needed here, which are a direct generalization of their counterparts from \cite{JMRS}, and can be found in the Appendix.


The second part, Section \ref{section: 3 - Tropicalizations}, deals with the tropicalization of the moduli space $L^S(X)$. 
We introduce the necessary background on tropical geometry in Subsection \ref{section: 3 - Preliminaries on tropical geometry}.
In Subsection \ref{section: 3 - trop L^S(X)} we exploit Propositions \ref{proposition: matroid of L^S(X)} and \ref{theorem: Plücker coords of L^S(X)} to study the tropicalization of $L^S(X)$. 

The last part, Section \ref{section: 4 - short flags}, introduces the short linear degenerate flag variety $\shortFlag^R$ and discusses its tropicalization provided the flags have low enough codimension in their ambient space.
In Subsection \ref{section: 4 - covering short flags} we briefly explain how to cover such flag varieties by (torus orbits of) the moduli spaces $L^S(X)$ investigated in earlier sections.
The final Subsection \ref{section: 4 - trop short flags} contains the study of $\trop \shortFlag^R$.

\subsection{Acknowledgments}\label{section: 1 - acknowledgements}
We acknowlegde support by the Deutsche Forschungsgemeinschaft (DFG, German Research Foundation), Project-ID 286237555, TRR 195. 
We would like to thank Xin Fang and Ghislain Fourier for multiple useful discussions. 

\subsection{Notation and conventions}\label{section: 1 - notation}

We provide a table of notation extending \cite{JMRS}.

\begin{center}
    $\begin{array}{cc}
        [n] & \text{The positive integers up to } n \vspace{.4ex} \\
        \binom{[n]}{m} & \text{The subsets in } [n] \text{ of cardinality } m \vspace{.4ex} \\
        X & \text{The }e\text{-dimensional linear space in which we consider subspaces (mostly $e = d+1$). } \vspace{.4ex} \\
        q_I & \text{The Plücker coordinates of the linear space X} \vspace{.4ex} \\
        R_{A,B} & \text{The Plücker relation for } A \in \binom{[n]}{m-1}, B \in \binom{[n]}{m+2} \vspace{.4ex} \\
        \Gr(m,n) & \text{The Grassmannian parametrizing m-dimensional linear spaces in } K^n \vspace{.4ex} \\
        S & \text{A subset of } [n] \text{ indexing the coordinates we project away} \vspace{.4ex} \\
        L^S(X) & \text{The moduli space of d-dimensional } S \text{-degenerate subspaces of X, see Definition } \ref{definition: L^S spaces} \vspace{.4ex} \\
        P_J & \text{Variables for the Plücker coordinates of a } S \text{-degenerate subspace of X} \vspace{.4ex} \\
        I^S_{A,B} & \text{The } S \text{-incidence relation for } A \in \binom{[n]}{d-1}, B \in \binom{[n]}{d+2}, \text{ see Equation } \eqref{equation: S-incidence relations} \vspace{.4ex} \\
        W & \text{A matrix whose row space is X} \vspace{.4ex} \\
        V & \text{A matrix with certain minors of W as entries, see Equation } \ref{equation: the matrix V} \vspace{.4ex} \\
        U & \text{The matrix of incidence relations, see Equation } \ref{equation: the matrix U^S} \vspace{.4ex} \\
        X^S & \text{The } S \text{-degeneration of } X \vspace{.4ex} \\
        W^S & \text{A matrix whose row space is } X^S \vspace{.4ex} \\
        V^S & S \text{-degenerate analogue of } V \vspace{.4ex} \\
        U^S & S \text{-degenerate analogue of } U \vspace{.4ex} \\
        \mathcal{X} & \text{The tropicalization of } X \text{, see Example } \ref{example: trop L^S(X) for n=4}
    \end{array}$
\end{center} 

\vspace{1em}
\textbf{Conventions}:
The symbol $\subset$ is reserved for strict inclusions while we use $\subseteq$ whenever we also want to allow equalities.

If $Y$ is a subobject of $X$ (subgroup, linear subspace, subvariety etc.) we often just write $Y \leq X$, where again $<$ is reserved for proper subobjects.
There will always be enough context to distinguish this from the ordering on $\ZZ$. \\
Finally we take a few intuitive measures to increase readability. For example we suppress the notation for unions and omit parentheses when we only want to add a few elements. Similar conventions are used for removing elements. In such cases the union or difference of sets is meant to bind stronger than other set operations occurring in the same term.
For example $ Aij \cap B \setminus k = (A \cup \{i,j\}) \cap (B \setminus \{k\})$.
The notation $A^C$ denotes the complement of $A$ in its superset, which most of the time is $[n]$, but will always be clear from the context

\section{The locus of $S$-linear degenerate codimension-one subspaces and its associated matroid}\label{section: 2 - L^S(X)}


\subsection{Preliminaries}\label{section: 2 - preliminaries}
We begin by recalling basic facts about Grassmannians, Plücker coordinates, incidence relations as well as their linear degenerations. 

Let $K$ be an algebraically closed field. As as a set the \emph{Grassmannian} $\Gr(d,n)$ is the collection of all $d$-dimensional linear subspaces of $K^n$, or equivalently the set of all $(d-1)$-dimensional projective linear subspaces of $\PP^{n-1}$. It can be equipped with the structure of an projective algebraic variety by embedding it into $\PP^{\binom{n}{d}-1}$ via the \emph{Plücker embedding}. Write a $d$-dimensional subspace $L \subseteq K^n$ as row space of a full rank matrix $M \in K^{d \times n}$, then its vector of \emph{Plücker coordinates} 
$$ \det(M^J)_J \in \PP^{\binom{n}{d}-1} $$
is a well defined element in projective space, i.e.\ it does not depend on our choice of $M$. Here, $J$ runs over all subsets of $[n]$ of cardinality $d$, while $M^J$ denotes the quadratic submatrix of $M$ with columns indexed by $J$. 

Let $K[P_J: J \in \binom{[n]}{d}]$ be the homogeneous coordinate ring of $\PP^{\binom{n}{d}-1}$. Then the image of the Plücker embedding is the subvariety of $\PP^{\binom{n}{d}-1}$ cut out by the following relations:

For any $A \in \binom{[n]}{d-1}$ and $B \in \binom{[n]}{d+1}$, the \emph{Plücker relation} $R_{A,B}$ is
\begin{equation*}
    R_{A,B} = \sum_{i \in B \setminus A} (-1)^{|[i]\cap A| + |[i] \cap B|} P_{A \cup i} \, P_{B \setminus i} ~ . 
\end{equation*}

Now, let $d \leq e$ be nonnegative integers, and suppose $X$ is a fixed $e$-dimensional linear subspace of $K^n$, with Plücker coordinates $q_I$ for $I \in \tbinom{[n]}{e}$. \\

Let $f \in \text{End}(K^n)$ be a linear map. A $d$-dimensional linear subspace $L$ is \emph{$f$-linear degenerately contained in $X$} if $f(L) \subseteq X$. 
If $S \subseteq [n]$ and $f = \pr_S$ is the projection vanishing on the standard unit vectors $e_i \in K^n$ with $i \in S$, we say \emph{$L$ is $S$-linear degenerately contained in $X$} or more briefly a \emph{$S$-degenerate subspace of $X$}.

Denote the Plücker coordinates of $L$ by $P_J$ for $J \in \tbinom{[n]}{d}$. Then $\pr_S(L) \subseteq X$ holds if and only if the following linear incidence relations are equal to zero:
For any $A \in \tbinom{[n]}{d-1}$ and $B \in \tbinom{[n]}{e+1}$, 
the \emph{$S$-incidence relation} \cite{BS23,LW19} $I^S_{A,B}$ is
\begin{equation}\label{equation: S-incidence relations}
 I^S_{A,B} = \sum_{i \in B \setminus (A\cup S)} (-1)^{|[i]\cap A| + |[i] \cap B|} q_{B \setminus i} \, P_{A \cup i}\enspace .
\end{equation}

For the remainder of this paper we always assume $e = d+1$.
Moreover, we denote the \emph{Plücker ideal} (generated by all $R_{A,B}$) with $\mathcal{I}_{pl}$, and the \emph{$S$-incidence ideal} (generated by all $I_{A,B}^S$) with $\mathcal{I}_{in}^S$.

\subsection{$L^S$-spaces and the case $S = \emptyset$}\label{section: 2 - Defs and empty S}
Fix a $(d+1)$-dimensional subspace $X$ of $K^n$ with Plücker coordinates $q = (q_I)_{I \in \binom{[n]}{d+1}}$. We want to study the moduli space of $d$-dimensional $S$-degenerate subspaces of $X$. 

\begin{definition}\label{definition: L^S spaces}
    Let $X \leq K^n$ be of dimension $d+1$ and $S \subseteq [n]$. The $L^S$-space $L^S(X)$ of $X$ is the moduli space of $d$-dimensional $S$-degenerate subspaces of $X$. That is $$L^S(X) = \{Y \in \Gr(d,n) : \pr_S(Y) \leq X \}.$$
\end{definition}

Clearly, $L^S(X)$ is the subvariety of $\PP^{\binom{n}{d}-1}$ cut out by the $S$-incidence relations together with the Plücker relations. We do not claim, that in general these relations define a radical ideal, although especially in the cases we focus on later, this will be evident. \\

For the reader's convenience and easier reference, we collect some known results from \cite{JMRS}, dealing with the case where $S$ is empty. 
As it turns out in this case $L(X) = L^\emptyset(X)$ is a $d$-dimensional linear subvariety of $\PP^{\binom{n}{d}-1}$. Moreover it can be explicitly described as (the projectivization of) the row space of a matrix \cite[Theorem 1]{JMRS}.

First write $X$ as the row space of a matrix $W \in K^{(d+1) \times n}$.
Then $L(X)$ is the row space of the matrix $V$ (depending on $W$) with rows indexed by $[d+1]$ and columns indexed by $\binom{[n]}{d}$ defined via
\begin{equation}\label{equation: the matrix V}
    V_{i,J} = \det( e_i ~|~ W^J),
\end{equation}
where $( e_i ~|~ W^J)$ is the $(d+1) \times (d+1)$-matrix obtained by prepending the $i$-th unit vector to the submatix $W^J$ of $W$ with columns indexed by $J$.

$V$ can also be expressed in terms of the Plücker coordinates of $X$. Indeed, at least one Plücker coordinate, say $q_C$, of $X$ is nonzero. Identifying $[d+1]$ with $C$ in order preserving manner, we view the rows of $V$ as indexed by $C$.
Then we have 
\begin{equation}\label{equation: formula for V}
    V_{i,J} = 
    \begin{cases}
        (-1)^{|[i] \cap J|} \cdot q_{J i} & \text{if } i \notin J; \\
        0 & \text{otherwise}.
    \end{cases}
\end{equation}
(cf. \cite[proof of Proposition 9]{JMRS}). \\

There is a second way to describe the underlying vector space of $L(X)$. 
Since the incidence relations are linear, we may collect them in a matrix $U = U(X) = U(q)$ of dimensions $\binom{n}{d-1} \binom{n}{d+2} \times \binom{n}{d}$ such that each row contains the coefficients of a relation. 
More precisely, for $A \in \binom{[n]}{d-1}, B \in \binom{[n]}{d+2}$ and $J \in \binom{[n]}{d}$ we have

\begin{equation}\label{equation: the matrix U}
    U_{(A,B),J} = 
    \begin{cases}
    (-1)^{|[i] \cap A| + |[i] \cap B|}q_{B \setminus j} & \text{if } J = Ai \text{ with } i \in B \setminus A, \\
    0 & \text{otherwise}
    \end{cases}
\end{equation}

Clearly, $L(X) = \ker U \cap \Gr(d,n)$. By \cite[Corollary 2]{JMRS} we can omit the intersection with $\Gr(d,n)$ so that $L(X) = \ker U$. In particular, the Plücker relations are not needed to cut out $L(X) \subseteq \PP^{\binom{n}{d}}$. Up to radical, they are already implied by the incidence relations. \\
 
Putting everything together, we have the following Theorem.

\begin{theorem}[\cite{JMRS}]\label{theorem: L(X) is linear + V & U}
    For any $X \in \Gr(d+1,n)$, $L(X)$ is a $d$-dimensional linear projective subvariety of $\PP^{\binom{n}{d}-1}$. It can be described as the row space of $V$ defined in Equation \eqref{equation: the matrix V} as well as the kernel of $U$ as defined in Equation \eqref{equation: the matrix U}.
\end{theorem}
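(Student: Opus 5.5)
The plan is to prove the two descriptions separately and then compare dimensions: first that $L(X)$ is exactly the projectivized row space of $V$, and then that $\ker U$ coincides with this row space.

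\emph{Row space of $V$.} By multilinearity of the determinant in the first column, for $\lambda \in K^{d+1}$ the $J$-th entry of $\lambda^T V$ is
\[
\det(\lambda \mid W^J) \;=\; \sum_i \lambda_i (-1)^{i+1}\det\bigl(W^J_{\hat\imath}\bigr),
\]
where $W^J_{\hat\imath}$ denotes $W^J$ with row $i$ deleted. This expansion is exactly the vector of maximal minors of a $d\times n$ matrix $M_\lambda W$. Here $M_\lambda \in K^{d\times(d+1)}$ has rows spanning the hyperplane $\lambda^{\perp}\subseteq K^{d+1}$ (the signs are absorbed by the standard cofactor identity).

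So, for $\lambda\neq 0$, the vector $\lambda^TV$ is the Plücker vector of $Y_\lambda=\operatorname{rowspace}(M_\lambda W)$. Since $W$ has full rank, $Y_\lambda$ is a $d$-dimensional subspace of $X$. Conversely, every $d$-dimensional $Y\le X$ is the image under $W$ of a unique hyperplane of $K^{d+1}$, hence equals $Y_\lambda$ for some $\lambda$, unique up to scaling. The map $\lambda\mapsto\lambda^TV$ is linear and injective, since $Y_\lambda\neq 0$ forces a nonzero minor. Therefore $L(X)=\PP(\operatorname{rowspace} V)$ is a linear space of projective dimension $d$.

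For the explicit formula in terms of $q$, I will choose $q_C\neq 0$ and row-reduce $W$ so that $W^C$ is the identity, which rescales $V$ by a nonzero constant. Then $\det(e_i\mid W^J)$ is, up to the sign $(-1)^{|[i]\cap J|}$ obtained by moving column $e_i=W^{\{i\}}$ into sorted position, the minor $q_{Ji}$. This minor vanishes when $i\in J$ because a column is repeated.

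\emph{Kernel of $U$.} Since every point of $\PP(\operatorname{rowspace}V)$ is a subspace of $X$, it satisfies all incidence relations, so $\operatorname{rowspace} V\subseteq \ker U$. It remains to show $\dim\ker U\le d+1$, which I expect to be the only nontrivial step. I would show that any $P\in\ker U$ is determined by the $d+1$ coordinates $P_J$ with $J\subseteq C$, arguing by induction on $|J\setminus C|$.

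Given $J\not\subseteq C$, pick $j\in J\setminus C$ and set $A=J\setminus j$ and $B=C\cup j$, so that $|A|=d-1$ and $|B|=d+2$. In the relation $I_{A,B}$ the term with $i=j$ is $\pm q_C P_J$. Every other term has $i\in C\setminus A$, and the corresponding index $A\cup i$ satisfies $|(A\cup i)\setminus C|=|J\setminus C|-1$. Since $q_C\neq0$, we can solve for $P_J$ in terms of coordinates with smaller distance from $C$, and the induction goes through.

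Hence $\dim\ker U\le\binom{d+1}{d}=d+1=\dim\operatorname{rowspace}V$, which forces equality. In particular the Plücker relations are not needed to cut out $L(X)$.
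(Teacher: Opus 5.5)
Your proof is correct. For the kernel description it takes a more elementary route than the one the paper relies on.

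The paper does not reprove the statement; it cites \cite{JMRS}. For the row space of $V$ it simply refers to that paper. For $\ker U$ it uses $L(X)=\ker U\cap\Gr(d,n)$ and drops the intersection with $\Gr(d,n)$ on the grounds that the Pl\"ucker relations lie in the saturation $(\mathcal{I}_{in}:\langle Q_C\rangle^\infty)$. This is the case $S=\emptyset$ of Theorem~\ref{theorem: algebraic statement}, proved by a double induction using the appendix lemmas. The lower triangular submatrix of $U$ with diagonal entries $\pm q_C$ is exactly your induction on $|J\setminus C|$, but there it serves only to bound the dimension.

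You instead prove linearity and the row space description directly, via hyperplanes $\lambda^\perp\subseteq K^{d+1}$ and Cauchy--Binet. After that, the inclusion $\operatorname{rowspace}V\subseteq\ker U$ and the rank bound force equality, so you need no ideal-theoretic input.

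What the saturation result buys is a stronger statement. It holds with the $Q_I$ as variables, so the incidence relations force the Pl\"ucker relations for every vector $q$ with $q_C\neq 0$, not only for Pl\"ucker vectors of actual subspaces $X$. It is also what the paper invokes to obtain linearity of $L^S(X)$ for $S\neq\emptyset$ (Corollary~\ref{corollary: L^S(X) is linear}).

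Your dimension count would in fact handle that degenerate case set-theoretically as well. For $S\subseteq C$ one checks $\pr_S(X^S)\subseteq X$, hence $L(X^S)\subseteq L^S(X)\subseteq\PP(\ker U^S)$. The same triangular submatrix survives in $U^S$, because the chosen $j\notin C$ cannot lie in $S$. So all three sets coincide without appealing to the saturation theorem.
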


As a linear space, $L(X)$ defines a matroid $\mathcal{M}(L(X))$ given as the matroid of dependencies among the columns of the matrix $V$. Alternatively, it can also be described as the dual matroid to the matroid of dependencies among the columns of the matrix $U$, or as the matroid of lines in a certain hyperplane arrangement induced by $X$. See \cite[Theorem 6]{JMRS} for details.

It is interesting that for \emph{generic} $X$ taken from an open dense subset of $\Gr(d+1,n)$, the associated matroid $\mathcal{M}(L(X))$ is always the same. 

\begin{definition}\label{definition: generic spaces and matroids}
    Given $X \in \Gr(d+1,n)$ the Plücker coordinates of $L(X)$ are homogeneous polynomials in the Plücker coordinates $q_I$ of $X$. Some of these polynomials vanish for any $X$ due to Plücker relations. Requiring that all remaining polynomials as well as all $q_I$  be non-zero defines the open subset of \emph{generic spaces} in $\Gr(d+1,n)$. 
\end{definition}

It follows immediately from the definition that for $X, X' \in \Gr(d+1,n)$ with $X$ generic and $X'$ arbitrary, all bases of $\mathcal{M}(L(X'))$ are also bases of $\mathcal{M}(L(X))$ (in fact by Proposition \ref{theorem: Plücker coords of L^S(X)} this characterizes genericity). This shows that $\mathcal{M}(L(X))$ is constant on generic spaces. 
%
We can describe this matroid more explicitly using Dilworth truncations, which we quickly recall now. The following definition due to Brylawski is taken from \cite[Chapter 7, Exercise 7.55]{White1986}.

\begin{definition}\label{definition: dilworth truncation}
    Suppose $M$ is a rank $e$ matroid on the ground set $[n]$. For $1 \leq k \leq e$, the $k$-the \emph{Dilworth truncation} $D_k(M)$ is defined via independent sets as follows. The ground set of $D_k(M)$ is given by the $k$-flats of $M$. The independent sets of $D_k(M)$ are the sets $\{J_1, \ldots, J_m\}$ such that for any $s>0$ and $\{j_1, \ldots, j_s\} \subseteq [m]$, the union $\bigcup_{l=1}^s J_{j_l}$ has at least rank $s+k-1$ in $M$. The matroid $D_k(M)$ has rank $e-k+1$.
\end{definition}

Consider the $k$-th Dilworth truncation of the free matroid $U_{n,n}$ of rank $n$ on $n$ elements. Relabeling the elements of $D_k(U_{n,n})$ by their complement gives rise to a rank $n-k+1$ matroid $\tilde{D}_k(U_{n,n})$, called the \emph{relabeled Dilworth truncation} of $U_{n,n}$. 
Concretely a collection of $(n-k)$-subsets $J_1, \ldots, J_m$ of $[n]$ is independent in $\tilde{D}_k(U_{n,n})$ if and only if $|\bigcap_{l=1}^s J_{j_l}| \leq n-k+1-s$ for any $\{j_1, \ldots, j_s\} \subseteq [m]$. \\
 
\begin{theorem}[\cite{JMRS}]\label{theorem: matroid of L(X)}
    Let $X \leq K^n$ be a generic subspace of dimension $d+1$. Then the matroid $\mathcal{M}(L(X))$ is given precisely by $\tilde{D}_{n-d}(U_{n,n})$.
\end{theorem}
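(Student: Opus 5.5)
Since $V$ represents $L(X)$ (Theorem~\ref{theorem: L(X) is linear + V & U}), $\mathcal{M}(L(X))$ is the matroid of the columns of $V$, and the plan is to read this matroid off geometrically. Let $w_1,\dots,w_n\in K^{d+1}$ be the columns of $W$. By \eqref{equation: the matrix V}, the column $V_{\cdot,J}$ is the vector of the linear functional $x\mapsto \det(x\,|\,W^J)$. This functional is nonzero exactly when the $w_j$, $j\in J$, are independent, and then its kernel is the hyperplane $H_J=\operatorname{span}\{w_j:j\in J\}$. Hence a family $\{J_1,\dots,J_m\}$ is independent in $\mathcal{M}(L(X))$ if and only if the hyperplanes $H_{J_1},\dots,H_{J_m}$ are in general position, that is, $\dim\bigcap_l H_{J_l}=d+1-m$. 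Since genericity in Definition~\ref{definition: generic spaces and matroids} includes $q_I\neq 0$ for all $I$, the $w_i$ are in general linear position and every $H_J$ is a genuine hyperplane.

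\emph{Necessity of the Dilworth condition.} Suppose some subfamily of size $s$ has $|\bigcap_l J_{j_l}|\ge d+2-s$. The corresponding $s$ hyperplanes all contain the span of the vectors $w_i$ with $i$ in this intersection. By general position this span has dimension at least $d+2-s$ (or is all of $K^{d+1}$ if $d+2-s > d+1$, which cannot happen for $s\ge1$). So the $s$ hyperplanes intersect in codimension at most $s-1$, and the corresponding columns of $V$ are dependent. Thus every independent set of $\mathcal{M}(L(X))$ satisfies $|\bigcap_{l\in T}J_l|\le d+1-|T|$ for every nonempty $T$, i.e.\ it is independent in $\tilde D_{n-d}(U_{n,n})$. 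This direction holds for any $X$ whose Plücker coordinates are all nonzero.

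\emph{Sufficiency.} Let $\{J_1,\dots,J_m\}$ satisfy the Dilworth condition. Independence of the corresponding columns of $V$ means that some $m\times m$ minor of these columns is nonzero, and each such minor is a polynomial in the Plücker coordinates $q$. By the definition of generic spaces, it suffices to find a single $X'$, or equivalently a single configuration $w'_1,\dots,w'_n$, for which the columns are independent. Polynomials not vanishing identically on $\Gr(d+1,n)$ are then nonzero at every generic $X$, and the remark after Definition~\ref{definition: generic spaces and matroids} makes this precise. To build such a configuration I would follow Lovász's argument for the generic realizability of Dilworth truncations, proceeding by induction on $m$.

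Suppose the hyperplanes $H_{J_1},\dots,H_{J_{m-1}}$ are already in general position, and suppose adding $H_{J_m}$ fails. Then $H_{J_m}$ contains $F=\bigcap_{l<m}H_{J_l}$. Call a subfamily $T$ \emph{tight} if $\bigcap_{l\in T}H_{J_l}$ has codimension less than $|T|$. Using submodularity of $T\mapsto\bigl|\bigcap_{l\in T}J_l\bigr|$ together with the dimension formula for intersections of flats, I would show that a union of intersecting tight families is again tight. From this I would extract a minimal tight family $T$ containing $m$ such that $\bigcap_{l\in T}H_{J_l}$ is spanned by the $w_i$ with $i\in\bigcap_{l\in T}J_l$. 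For generic $w$ this forces $|\bigcap_{l\in T}J_l|\ge d+2-|T|$, contradicting the hypothesis.

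\emph{Main obstacle.} The key step is the claim that, for generic $w$, every intersection of the $H_{J}$'s arising in the induction is spanned by the $w_i$ it contains. I expect this to be the hardest part. My first attempt would be to handle it by choosing the $w_i$ successively, each generic with respect to all previously determined flats. Finally, the rank count $d+1$ agrees on both sides (Definition~\ref{definition: dilworth truncation} with $k=n-d$), which serves as a consistency check.
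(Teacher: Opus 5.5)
You have a genuine gap: the sufficiency half is not proved. (The paper has no proof of its own here; it imports the theorem from \cite{JMRS}, so your proposal is measured on its own.) The parts you do prove are correct. Since all $q_I\neq 0$, the $w_i$ are in general linear position. A subfamily $T$ with $|\bigcap_{l\in T}J_l|\ge d+2-|T|$ therefore gives hyperplanes sharing a subspace of dimension at least $d+2-|T|$, hence dependent columns of $V$. Your reduction is also correct: by the remark after Definition~\ref{definition: generic spaces and matroids}, extending to a basis, it suffices to make each Dilworth-independent family independent for one $X'$.

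Sufficiency is the whole content of the theorem, and the sketch does not establish it. Your notion of ``tight'' (codimension of $\bigcap_{l\in T}H_{J_l}$ less than $|T|$) just means dependent in $\mathcal{M}(L(X))$. So ``a union of intersecting tight families is tight'' holds trivially, since supersets of dependent sets are dependent, and it carries no information. In particular it cannot produce the spanning property you then assert. The notion a Lov\'asz-type argument needs is Dilworth tightness, $|\bigcap_{l\in T}J_l|=d+1-|T|$. Note also that $T\mapsto|\bigcap_{l\in T}J_l|$ is supermodular, not submodular. More seriously, for a minimal dependent $T$ your claim $\bigcap_{l\in T}H_{J_l}=\mathrm{span}\{w_i: i\in\bigcap_{l\in T}J_l\}$ is, by your own dimension count, equivalent to ``every circuit of $\mathcal{M}(L(X))$ is Dilworth-dependent''. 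That is exactly the sufficiency statement, so as written the argument reduces the theorem to itself.

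This step cannot follow from general position alone. For $d=2$, $n=6$ one can have all $q_I\neq 0$ while the lines $w_1w_2$, $w_3w_4$, $w_5w_6$ are concurrent. This is the vanishing of one of the $15$ binomial Pl\"ucker coordinates in the $n=6$, $d=2$ example of Section~\ref{section: 2 - higher codim}, even though $\{12,34,56\}$ is a basis of $\tilde D_4(U_{6,6})$. So the extra genericity must be used essentially, and ``choosing the $w_i$ successively'' would have to become an actual induction.

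The quickest repair is to invoke the classical result you are reproving.
\begin{itemize}
\item The map $c\mapsto c^{T}W$ is an isomorphism $K^{d+1}\to X$.
\item It sends $V_{\cdot,J}$ to $(\det(w_j\,|\,W^J))_j$, a nonzero vector of $X\cap\mathrm{span}\{e_i: i\notin J\}$, which is a line when the $q_I$ are nonzero.
\item Hence $\mathcal{M}(L(X))$ is the matroid of the points where the rank-$(n-d)$ flats $\mathrm{span}\{e_i: i\in h\}$ of $U_{n,n}$ meet the codimension-$(n-d-1)$ subspace $X$.
\item The theorem of Lov\'asz and Mason (see also Brylawski's coordinatization of the Dilworth truncation) says a generic subspace of codimension $k-1$ meets the rank-$k$ flats of a represented matroid in a representation of $D_k$. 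This gives $D_{n-d}(U_{n,n})$ for some $X'$.
\end{itemize}
Relabeling by complements and combining with your reduction and necessity argument yields the statement. This matches the hyperplane-arrangement description of $\mathcal{M}(L(X))$ in \cite[Theorem 6]{JMRS} that the paper mentions.
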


If $S$ is nonempty, $L^S(X)$ will in general not be a linear space. The trivial example is taking $S = [n]$, in which case $L^S(X) = \Gr(d,n)$. We can get  more precise by analyzing which of the $q_I$ and $P_J$ have a chance to appear in a $S$-incidence relation:

\begin{lemma}\label{lemma: occuring plückercoords}
    A nonzero Plücker coordinate $q_C$ (with $|C|=d+1$) appears in some $S$-incidence relation if and only if $|S \cap C| \geq |S| - (n-d-2)$. 
    On the other hand, a Plücker coordinate $P_J$ occurs in a $S$-incidence relation if and only if there is some $q_C \neq 0$ such that $J \setminus (C \cup S) \neq \emptyset$.
\end{lemma}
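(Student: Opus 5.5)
The proof is a direct inspection of the $S$-incidence relations
\[
I^S_{A,B} = \sum_{i \in B \setminus (A\cup S)} \pm\, q_{B\setminus i}\, P_{A\cup i},
\qquad A \in \tbinom{[n]}{d-1},\; B \in \tbinom{[n]}{d+2}.
\]
A coefficient $q_C$ occurs in $I^S_{A,B}$ exactly when $C = B\setminus i$ for some $i \in B\setminus(A\cup S)$. For both claims we will read off which pairs $(C,J)$ can arise and then build $A,B$ explicitly. Distinct $i$ give distinct monomials $P_{A\cup i}$, so no cancellation occurs. Hence "appears" can be taken to mean that the index triple is admissible, with $q_C \neq 0$ for the relation to be nontrivial.

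\emph{First claim.} Fix $C$ with $|C| = d+1$. We need $i \notin C\cup S$ and a set $A$ of size $d-1$ with $i \notin A$; then put $B = C\cup i$. So $q_C$ occurs iff there is some $i \in [n]\setminus(C\cup S)$, i.e.\ iff $|C\cup S| \le n-1$. Since $|C\cup S| = d+1+|S| - |S\cap C|$, this is the condition $|S\cap C| \ge |S| - (n-d-2)$. The converse direction is immediate: for such an $i$, choose any $A \subseteq [n]\setminus i$ of size $d-1$ (possible since $n-1 \ge d-1$). The term $q_C P_{A\cup i}$ then appears in $I^S_{A,C\cup i}$.

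\emph{Second claim.} We need pairs with $J = A\cup i$, $C = B\setminus i$, $i \notin A\cup S$, and $q_C \neq 0$.

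For the forward direction, suppose such a term appears. Then $i \in J$, $i \notin C$ (because $C = B\setminus i$) and $i \notin S$, so $i \in J\setminus(C\cup S)$.

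For the converse, suppose $q_C \neq 0$ and $i \in J\setminus(C\cup S)$. Set $A = J\setminus i$, which has size $d-1$, and $B = C\cup i$, which has size $d+2$ because $i \notin C$. Since $i \notin A\cup S$, the summand $\pm q_C P_J$ is present in $I^S_{A,B}$.

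The only step needing care is bookkeeping. One should check that the sizes $|A| = d-1$ and $|B| = d+2$ are consistent with $e = d+1$. One should also note that the nonvanishing of $q_C$ is what makes the term genuinely appear, since the relation is linear in the $P$ with these coefficients. No deeper obstacle is expected.
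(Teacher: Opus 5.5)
Your proof is correct and follows essentially the same route as the paper: you observe that $q_C$ occurs in $I^S_{A,B}$ exactly when $B = C\cup i$ for some $i\notin A\cup S\cup C$, so the first claim reduces to $[n]\setminus(C\cup S)\neq\emptyset$, which is equivalent to the stated cardinality condition. For the second claim you make the explicit choice $A = J\setminus i$, $B = C\cup i$, which the paper only indicates as being proven ``in similar fashion.''
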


\begin{proof}
    We have $q_C$ is a coefficient in $I_{A,B}^S$ if and only if there is $j \in B \setminus (A \cup S)$ such that $C = B \setminus j$. This is the same as saying $B = Cj$ and $j \notin A$ for some $j \in [n] \setminus (S \cup C)$. In particular, $[n] \setminus (S \cup C) \neq \emptyset$. 
    
    On the other hand, given any $j \in [n] \setminus (S \cup C)$ we can always choose $A$ and $B$ such that $B = Cj$ and $j \notin A$. 
    We conclude $q_C$ appears in some $S$-incidence relation if and only if $[n] \setminus (S \cup C) \neq \emptyset$. This implies the first claim and the second statement is proven in similar fashion.
\end{proof}

As immediate consequence we get a bound on the cardinality of $S$ for which we can expect $L^S(X)$ to be a linear space:

\begin{lemma}\label{lemma: need pl.rels if |S| >= d+2}
    For $|S| \geq d+2$, the Plücker ideal $\mathcal{I}_{pl}$ is not contained in the radical of the $S$-incidence ideal $\mathcal{I}_{in}^S$. Thus, in general, the Plücker relations are needed to cut out $L^S(X)$.
\end{lemma}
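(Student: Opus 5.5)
The plan is to use Lemma \ref{lemma: occuring plückercoords} to show that when $|S| \geq d+2$ the $S$-incidence relations impose no condition at all. We then exhibit a point in the zero locus of $\mathcal{I}_{in}^S$ that violates a Plücker relation. By the Nullstellensatz ($K$ is algebraically closed), this shows that some $R_{A,B}$ does not vanish on $V(\mathcal{I}_{in}^S)$, hence $R_{A,B} \notin \sqrt{\mathcal{I}_{in}^S}$.

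First, fix any $q_C$ with $|C| = d+1$. The proof of Lemma \ref{lemma: occuring plückercoords} shows that $q_C$ occurs in some $I^S_{A,B}$ if and only if $[n] \setminus (S \cup C) \neq \emptyset$, which is equivalent to $|S \cup C| < n$. In a relation $I^S_{A,B}$, the index $i$ ranges over $B \setminus (A \cup S)$ with $|B| = d+2$, so only indices $i \notin S$ contribute.

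In the extreme case $S = [n]$, every $I^S_{A,B}$ is identically zero. Then $\mathcal{I}_{in}^S = 0$, so $V(\mathcal{I}_{in}^S) = \PP^{\binom{n}{d}-1}$. Since $\Gr(d,n)$ is a proper subvariety whenever $2 \leq d \leq n-2$, some Plücker relation is nonzero and therefore not in the radical of $0$. This already proves the ``in general'' part of the statement, and it is the example the paper itself mentions, $L^{[n]}(X) = \Gr(d,n)$.

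For an arbitrary $S$ with $|S| \geq d+2$, the linear relations only involve the $P_J$ with $J \setminus (C \cup S) \neq \emptyset$ for some $q_C \neq 0$. I would argue that there remain coordinates $P_J$ that are either unconstrained, or constrained only by the single linear condition coming from a relation whose other terms vanish. Concretely, take $J \subseteq S$. Since $|S| \geq d+2 > d$, there are at least $d+2$ such coordinates, namely the $d$-subsets of a fixed $(d+2)$-subset $B' \subseteq S$. The second part of Lemma \ref{lemma: occuring plückercoords} shows that no $P_J$ with $J \subseteq S$ occurs in any $I^S_{A,B}$. Indeed, the variable appearing in $I^S_{A,B}$ is $P_{A \cup i}$ with $i \notin S$, so it is never indexed by a subset of $S$.

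Hence the point $P$ defined by arbitrary values on $\{P_J : J \subseteq S\}$ and zero elsewhere lies in $V(\mathcal{I}_{in}^S)$. Choose $A, B$ with $A \cup B \subseteq S$, $|A| = d-1$, $|B| = d+1$ and $|B \setminus A| \geq 3$, for instance $A \cap B$ of size $d-2$. This is possible since $|S| \geq d+2$. Then $R_{A,B}$ involves only variables indexed by subsets of $S$, and it is a nonzero quadric (a genuine three-term Plücker relation) in these free variables, so some choice of values makes it nonzero. Thus $R_{A,B} \notin \sqrt{\mathcal{I}_{in}^S}$.

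The main obstacle is the bookkeeping for small $d$. For $d = 1$ there are no Plücker relations, so the statement implicitly needs $d \geq 2$. We also need to check that $R_{A,B}$ is not identically zero, which requires $|B \setminus A| \geq 3$; this is exactly what the size condition on $S$ guarantees.
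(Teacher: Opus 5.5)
Your proposal is correct and follows essentially the same route as the paper. Both arguments observe that no $P_J$ with $J \subseteq S$ occurs in any $I^S_{A,B}$, and then exhibit a three-term Plücker relation $R_{A,B}$ with $A \cup B \subseteq S$ and $|B \setminus A| = 3$. Your evaluation at a point supported on $\{P_J : J \subseteq S\}$ makes explicit the non-membership in the radical, which the paper asserts directly. That step needs no Nullstellensatz: if $R_{A,B}$ lay in the radical it would vanish on $V(\mathcal{I}_{in}^S)$. Your caveat that $d \geq 2$ is required is apt, and it applies equally to the paper's choice of $A$ and $B$.
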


\begin{proof}
    Any Plücker coordinate $P_J$ occurring in some $S$-incidence relation must satisfy $J \setminus S \neq \emptyset$. In other words, $\mathcal{I}_{in}^S$, and thus its radical, are completely independent of the Plücker coordinates $P_J$ with $J \subseteq S$.
    
    Now suppose $S = \{s_1, \ldots, s_{d+2}, \ldots\}$. We choose $A = \{s_1, \ldots, s_{d-1} \}$ and $B = \{s_2, \ldots, s_{d+2} \}$ and observe that the 3-term Plücker relation 
    $$R_{A,B} = \pm P_{A s_d} P_{B \setminus s_d} \pm P_{A s_{d+1}} P_{B \setminus s_{d+1}} \pm P_{A s_{d+2}} P_{B \setminus s_{d+2}}$$ only depends on Plücker coordinates $P_J$ with $J \subseteq S$. Hence $R_{A,B} \notin \text{rad}(\mathcal{I}_{in}^S)$.
\end{proof}

Just like in the case for $S = \emptyset$, we can collect the $S$-incidence relations in a matrix $U^S$ given by

\begin{equation}\label{equation: the matrix U^S}
    U^S_{(A,B),J} = 
    \begin{cases}
    (-1)^{|[i] \cap A| + |[i] \cap B|}q_{B \setminus j} & \text{if } J = Ai \text{ with } i \in B \setminus (A \cup S); \\
    0 & \text{otherwise.}
    \end{cases}
\end{equation}

Thus $U^S$ arises from $U$ by deleting entries. The above lemma tells us precisely which of the $q_C$ might still appear in $U^S$. \\

\begin{remark}\label{remark: q_C in U vs U^S}
    Caution must be taken: If $q_C \neq 0$ and $|S \cap C| \geq |S| - (n-d-2)$ then $U^S$ will in general contain $q_C$ less often than $U$. 
    
    Take for example $n = 5, d = 2, S = 12$ and $C = 134$. Then $q_{134}$ will appear in $U^S$, as $|S \cap C| = 1 = |S|-(n-d-2)$. However, we have 
    \begin{align*}
        I_{1,1234} &= q_{123}P_{14} - q_{124}P_{13} + q_{134}P_{12}, \\
        I_{1,1234}^S &= q_{123}P_{14} - q_{124}P_{13},
    \end{align*}
    and so $U^S_{(1,1234),12} = 0$, while $U_{(1,1234),12} = q_{134}$. 
    
    This shows that $U^S$ can usually not be obtained from $U$ by just deleting all $q_C$ with $|S \cap C| < |S| - (n-d-2)$.
\end{remark}

\subsection{Cocodim 1}\label{section: 2 - cocodim 1}

We now turn our attention to the case where $d+1 = \dim X = n-1$, and let $S \subseteq [n]$ arbitrary. Thus we study the moduli space of $S$-degenerate codimension one spaces in a codimension one space of $K^n$. This situation shall henceforth be dubbed as cocodim 1. \\

This situation is our prime case since our tools allow a comprehensive study. First, let us reconsider Lemma \ref{lemma: occuring plückercoords}. Under the given circumstances $|S \cap C| \geq |S| - (n-d-2)$ reduces to the condition $S \subseteq C$. Now suppose for such $C$, the Plücker coordinate $q_C$ appears in an incidence relation $I_{A,B}$. This means there is $j \in B \setminus A$ s.t. $C = B\setminus j$. Thus $j \notin S \subseteq C$ and so $j \in B \setminus (A \cup S)$. This shows that $q_C$ will still appear in the $S$-incidence relation $I^S_{A,B}$ and the problem described in Remark \ref{remark: q_C in U vs U^S} does not occur. Hence, $U^S$ arises from $U$ precisely by deleting each entry containing a $q_C$ not satisfying $S \subseteq C$. \\

Let us now define $q^S$ by setting to zero all entries $q_C$ of $q$ for which $S \subseteq C$ fails. Recall that $\text{Gr}(n-1,n) \cong \PP^{n-1}$. Thus, if $q^S \neq 0$, there is a codimension 1 vector space $X^S$ having $q^S$ as a Plücker vector. We call $X^S$ the \emph{$S$-degeneration} of $X$. In this case the previous paragraph can be rephrased as $U^S(q) = U(q^S)$.

\begin{theorem}\label{theorem: L^S(X) = L(X^S) in cocodim 1}
    Suppose $q$ and $S$ are chosen such that there is $q_C \neq 0$ with $S \subseteq C$. Then we have $L^S(X) = L(X^S)$ and $L^S(X)$ is a linear projective variety of dimension $n-2$, cut out solely by the $S$-incidence relations \eqref{equation: S-incidence relations}, i.e. $L^S(X) = \ker U^S$.
    On the other hand, if all coordinates $q_I$ with $S \subseteq I$ vanish, we have $L^S(X) = \textnormal{Gr}(n-2,n)$.
\end{theorem}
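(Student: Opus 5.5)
The plan is to reduce everything to the identity $U^S(q) = U(q^S)$ established in the paragraph preceding the statement, and then apply Theorem \ref{theorem: L(X) is linear + V & U} to $X^S$.

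\textbf{First case.} Assume some $q_C \neq 0$ with $S \subseteq C$.

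First, $q^S \neq 0$. Since $\Gr(n-1,n) \cong \PP^{n-1}$, every nonzero vector in $K^{\binom{[n]}{n-1}}$ is a Plücker vector, so $X^S$ exists and is a hyperplane with Plücker vector $q^S$.

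Next, $L^S(X)$ is by definition cut out by the $S$-incidence relations together with the Plücker relations, so
\[
L^S(X) = \ker U^S(q) \cap \Gr(n-2,n) = \ker U(q^S) \cap \Gr(n-2,n) = L(X^S).
\]
Applying Theorem \ref{theorem: L(X) is linear + V & U} to $X^S$, which has dimension $d+1 = n-1$, gives that $L(X^S) = \ker U(q^S)$ is a linear projective variety of dimension $d = n-2$. Combining the two identities yields $L^S(X) = \ker U^S$, so the Plücker relations are not needed to cut it out.

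The one point I would double-check is the identity $U^S(q) = U(q^S)$ entry by entry. Take an entry $U_{(A,B),J}$ with $J = Ai$ and $i \in B \setminus A$; its coefficient is $q_{B\setminus i}$. There are two possibilities.

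\begin{itemize}
\item If $S \subseteq B \setminus i$, then $i \notin S$. The entry survives in $U^S$ with the same coefficient, and it also equals the entry of $U(q^S)$, because $q^S_{B\setminus i} = q_{B\setminus i}$.
\item If $S \not\subseteq B\setminus i$, the entry of $U(q^S)$ is $0$. In $U^S$, either $i \in S$, so the entry is deleted, or $i \notin S$, so the coefficient $q_{B\setminus i}$ remains. Since $|B\setminus i| = n-1$, the set $B\setminus i$ misses exactly one element of $[n]$, and this element lies in $S$. Also $i \notin B\setminus i$, and $B$ has size $n$, so the missing element is $i$. Hence $i \in S$ always holds and the entry is deleted in $U^S$ as well.
\end{itemize}

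This confirms the identity. Note that here $|B| = d+2 = n$ forces $B = [n]$, which makes the argument clean.

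\textbf{Second case.} Assume all $q_I$ with $S \subseteq I$ vanish.

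By the case analysis above, every entry of $U^S$ either is deleted (when $i \in S$) or carries a coefficient $q_{B\setminus i}$ with $S \subseteq B\setminus i$, which is zero by assumption. Hence $U^S = 0$, so there are no nontrivial $S$-incidence relations, and only the Plücker relations remain. Therefore $L^S(X) = \Gr(n-2,n)$.

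The main obstacle is only the bookkeeping in the entrywise comparison, namely showing that an entry of $U^S$ whose coefficient index does not contain $S$ is always deleted. The cocodimension-one hypothesis is what makes this work, via $B = [n]$.
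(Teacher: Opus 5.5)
Your proof is correct and follows the paper's route. The paper also reduces everything to $U^S(q)=U(q^S)$ and then invokes Theorem~1 and Corollary~2 of \cite{JMRS} for $X^S$; it proves that identity in the paragraph preceding the theorem, which you redo entrywise using $B=[n]$. For the degenerate case the paper cites the lemma on which Plücker coordinates occur in $S$-incidence relations, whereas you observe directly that $U^S=0$.
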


\begin{proof}
    If all $q_C$ with $S \subseteq C$ vanish, then by Lemma \ref{lemma: occuring plückercoords} so do all incidence relations. Hence we obtain $L^S(X) = \textnormal{Gr}(n-2,n)$.
    Now suppose we find $q_C \neq 0$ with $S \subseteq C$. In this case $U^S(q) = U(q^S)$ and we deduce
    \begin{align*}
        L^S(X)  \nonumber
        &= \ker U^S(q) \cap \text{Gr}(n-2,n) \\ 
        &= \ker U(q^S) \cap \text{Gr}(n-2,n) \\ 
        &= L(X^S). \nonumber
    \end{align*}
    This reduces our claims to Theorem 1 and Corollary 2 of \cite{JMRS}. 
\end{proof}

We shall (partially) extend this Theorem to the general setting later (see Theorem \ref{theorem: L^S(X) = L(X^S)}). Note that for generic $X$ we always know that $L^S(X) = L(X^S)$ is a linear space as all $q_I$ are nonzero. By Theorem \ref{theorem: L(X) is linear + V & U} it is the row space of $V^S := V(q^S)$, i.e. we just need to delete entries. In general, if $q_C \neq 0$ and $S \subseteq C$ then, by \eqref{equation: formula for V}, we can write 

\begin{equation}\label{equation: formula V^S in cocodim 1}
    V^S_{i,J} = \begin{cases}
    (-1)^{|[i] \cap J|} \cdot q_{J i} & \text{if } i \notin J \text{ and } S \subseteq J i \\
    0 & \text{otherwise}.
    \end{cases}
\end{equation}
 
\begin{example}
    Suppose $n = 4$, and $q_{123} \neq 0$. Then, by equation \eqref{equation: formula V^S in cocodim 1}, $L(X)$ is the row space of 
    $$
    V = 
    \begin{pmatrix}
        0 & 0 & 0 & q_{123} & q_{124} & q_{134}  \\
        0 & -q_{123} & -q_{124} & 0 & 0 & q_{234}  \\
        q_{123} & 0 & -q_{134} & 0 & -q_{234} & 0
    \end{pmatrix},
    $$
    where we ordered the columns lexicographically. To obtain the matrix $V^1$, which describes the space $L^1(X)$, we need only to set $q_{234}$ to zero, which yields
    $$
    V^{1} = 
    \begin{pmatrix}
        0 & 0 & 0 & q_{123} & q_{124} & q_{134}  \\
        0 & -q_{123} & -q_{124} & 0 & 0 & 0  \\
        q_{123} & 0 & -q_{134} & 0 & 0 & 0
    \end{pmatrix}.
    $$
    It is important to view $q$ in an appropriate affine open when degenerating. For example, say we additionally know $q_{124} \neq 0$. 
    We cannot get a matrix describing $L^{14}(X)$ from $V^1$ directly. Indeed, just deleting $q_{123}$ results in a lower rank matrix. Equation \eqref{equation: formula V^S in cocodim 1} tells us to view $q$ in the affine open determined by $q_{124} \neq 0$ instead: $\{1,4\} \not \subseteq \{1,2,3\}$, but clearly $\{1,4\} \subseteq \{1,2,4\}$. This gives
    $$
    V = 
    \begin{pmatrix}
        0 & 0 & 0 & q_{123} & q_{124} & q_{134}  \\
        0 & -q_{123} & -q_{124} & 0 & 0 & q_{234}  \\
        q_{124} & q_{134} & 0 & q_{234} & 0 & 0
    \end{pmatrix}
    $$
    and deleting $q_{234}$ and $q_{123}$ to obtain $V^{14}$ is no longer an issue. 
\end{example}

We notice that $V$ possesses a ``hook block structure'', and that passing to a linear degeneration means deleting the hooks from back to front. This is always the case if one orders the rows and columns of $V^S$ correctly. \\

Clearly, any element in $\Gr(n-1,n)$ is of the form $X^S$ for some generic $X$ and some $S \subset [n]$. Thus, if we want to describe the matroid of $L^S(X)$, Theorem \ref{theorem: L^S(X) = L(X^S) in cocodim 1} allows us to reduce to the generic case.
Note that for generic $X \leq K^n$ of codimension 1, the matroid $\mathcal{M}(L(X))$ is more easily understood as a graphical matroid cf. \cite[Example 15]{JMRS}. Indeed, by Theorem \ref{theorem: matroid of L(X)}, it is the relabeled Dilworth truncation $\tilde{D}_{2}(U_{n,n})$. If we forget the relabeling and instead consider $D_{2}(U_{n,n})$ directly,
a collection of $2$-subsets $J_1, \ldots, J_m$ in $[n]$ is independent if and only if $|\bigcup_{l=1}^s J_{j_l}| \geq s+1$ for any ${j_1, \ldots, j_s} \subseteq [m]$. Now interpret the elements of $[n]$ as vertices and $2$-subsets of $[n]$ as edges. The latter condition means precisely that a $J_1, \ldots, J_m$ are independent if and only if they do not contain a cycle. Hence $D_{2}(U_{n,n})$ is the graphical matroid $\mathcal{M}(K_n)$ of the complete graph on $n$ vertices. In particular, it is connected. \\

When $S \neq \emptyset$, the matrix $V^S$ has a block diagonal structure, so $\mathcal{M}(L^S(X))$ decomposes into connected components. 

\begin{proposition}\label{proposition: matroid of L^S(X) in cocodim 1}
    Suppose $X \leq K^n$ is generic of codimension $1$ and $S \subset [n]$. 
    Then the linear matroid $\mathcal{M}(L^S(X))$ decomposes into a direct sum 
    $$\mathcal{M}(L^S(X)) = \mathcal{M}_S \oplus \mathcal{M}_{S,0} \oplus \bigoplus_{a \in S} \mathcal{M}_{S,a} .$$
    Here, 
    \begin{itemize}
        \item $\mathcal{M}_S \cong \mathcal{M}(K_{[n]\setminus S})$ is the restriction of $\mathcal{M}(L(X))$ to those $(n-2)$-sets containing all of $S$;\item 
    $\mathcal{M}_{S,0}$ is the rank zero matroid on the $(n-2)$-sets missing two elements of $S$;
    \item
     for $a \in S$ we have a free rank one matroid $\mathcal{M}_{S,a}$ whose ground set consists of the $(n-2)$-sets containing $S \setminus a$ but not $a$. 
    \end{itemize}
\end{proposition}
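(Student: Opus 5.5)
The plan is to work directly with the matrix $V^S$ from Equation \eqref{equation: formula V^S in cocodim 1}. Since $X$ is generic, all $q_I$ are nonzero, so Theorem \ref{theorem: L^S(X) = L(X^S) in cocodim 1} applies, and $L^S(X)$ is the row space of $V^S$. Throughout, $n-2$ sets $J$ are written as $J = [n]\setminus\{k,l\}$, and $d+1 = n-1$. We choose the nonzero coordinate $q_C$ with $S\subseteq C$ and index the rows of $V^S$ by $C = [n]\setminus c$ with $c\notin S$; since $S \neq [n]$, such a $c$ exists. The entry $V^S_{i,J}$ is nonzero exactly when $i\notin J$ and $S\subseteq Ji$, i.e.\ when $i\in\{k,l\}$ and $S\subseteq [n]\setminus\{k,l\}\cup i$.

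First, we sort the columns into the three types of the statement.
\begin{itemize}
\item If $|S\cap\{k,l\}|=2$, then no $i$ satisfies the condition, so the column vanishes. These columns are loops, giving $\mathcal{M}_{S,0}$.
\item If $\{k,l\}\cap S=\{a\}$, the only possible nonzero row is $i=a$, and $a\in C$. So this column is a nonzero multiple of the unit vector $e_a$. All these columns, for fixed $a$, are parallel, and the row $a$ is supported only on them: any other column has $a\notin\{k,l\}$ or fails the condition. This gives a rank one summand $\mathcal{M}_{S,a}$ on the sets containing $S\setminus a$ but not $a$, which is free of rank one in the sense of being a parallel class.
\item If $\{k,l\}\cap S=\emptyset$, the nonzero entries lie in rows $i\in\{k,l\}\cap C\subseteq [n]\setminus S$.
\end{itemize}

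Second, we check that this is a block diagonal decomposition. The rows indexed by $a\in S$ meet only columns of the second type, and the rows in $C\setminus S$ meet only columns of the third type. So $V^S$ is block diagonal after permuting rows and columns, and the column matroid is the direct sum of the blocks. It remains to identify the third block, $\mathcal{M}_S$.

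Third, and this is the main step, we identify $\mathcal{M}_S$. On columns $J\supseteq S$ and rows $C\setminus S$, the entries of $V^S$ coincide with those of $V$ by Equation \eqref{equation: formula for V}, because the condition $S\subseteq Ji$ is automatic. The rows $a\in S$ of $V$ vanish on these columns, since $a\in J$. Hence the restriction of $\mathcal{M}(L(X))$ to the sets $J\supseteq S$ has exactly this block as its representation, which matches the claimed description. Via Theorem \ref{theorem: matroid of L(X)} and the graphical interpretation $D_2(U_{n,n})=\mathcal{M}(K_n)$, restricting to the edges $\{k,l\}\subseteq [n]\setminus S$ gives $\mathcal{M}(K_{[n]\setminus S})$, because the restriction of a graphic matroid to an edge subset is the graphic matroid of that subgraph.

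The point needing care is genericity. The complement labelling means the edge of $J$ is $\{k,l\}$, and we use that the independence of these columns is governed by $\mathcal{M}(L(X))$ for generic $X$, not by the degenerate $X^S$. This is fine because these entries are literally the entries of $V(q)$. A direct sanity check is also available: the block is, up to signs, a vertex–edge incidence matrix of $K_{[n]\setminus S}$ with the row $c$ deleted and entries scaled by the nonzero $q_{[n]\setminus k}$ or $q_{[n]\setminus l}$. Column scaling does not matter, and for row scaling one checks that each row $i$ consistently carries the factor $q_{[n]\setminus i'}$ coming from the other endpoint $i'$. That factor varies along the row, so the cleaner route is simply to invoke the restriction argument above.
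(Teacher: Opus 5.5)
Your proposal is correct and follows essentially the same route as the paper. Both arguments sort the columns of $V^S$ into zero columns, nonzero multiples of $e_a$, and columns supported on rows outside $S$. Both then read off the block-diagonal structure and identify the last block with the restriction of $V$ to the sets containing $S$, which in $\mathcal{M}(K_n)$ leaves exactly the edges on $[n]\setminus S$.

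The sanity check you abandoned does in fact go through. Scale column $[n]\setminus\{k,l\}$ by $(q_{[n]\setminus k}q_{[n]\setminus l})^{-1}$ and row $i$ by $(-1)^{i-1}q_{[n]\setminus i}$; this turns the block into a directed vertex--edge incidence matrix. Still, the restriction argument alone is enough for the proof.
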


\begin{proof}
    Let $E_S = \{ J : S \subseteq J \}, E_{S,0} = \{J : | S\setminus J| \geq 2\}$ and $E_{S,a} = \{J : S \setminus J = a \}$ for $a \in S$. Recall that $\mathcal{M}(L^S(X))$ is determined by the dependencies among columns of the matrix $V^S$.
    Without loss of generality, we may assume $S \subseteq [n-1]$ .
    Firstly, if $J \in E_{S,0}$, then $V^S_{i,J} = 0$ for any $i$. So $E_{S,0}$ contains only loops of $\mathcal{M}(L^S(X))$ as claimed. If $J \in E_{S,a}$ or some $a \in S$, then $V^S_{i,J} \neq 0$ if and only if $i = a$.
    Finally, let $J$ be in $E_S$ then $V^S_{i,J} \neq 0$ implies $i \notin S$. So up to reordering rows and columns we have a block diagonal matrix (with rectangular blocks). Moreover, the matrices $V^S$ and $V$ restricted to the columns in $E_S$ are the same. This proves the claimed decomposition. Viewed in terms of $\mathcal{M}(K_n)$ the restriction to $E_S$ leaves us precisely with the edges between vertices in $[n] \setminus S$.
\end{proof}

The last proposition implies that $\mathcal{M}(L^S(X))$ is graphical, whenever $L^S(X)$ is linear. We just need to add dipoles connected by parallel edges and singletons with loops attached to a suitable complete graph. \\

Provided $L^S(X)$ is linear, our next goal is to express its Plücker coordinates explicitly in terms of $q$. This gives rise to another interpretation of $\mathcal{M}(L^S(X))$.
First, assume $X$ to be generic and $S = \emptyset$.
The bases of $\mathcal{M}(L(X))$ correspond to spanning trees of $K_n$. Given such a spanning tree $T$, we have a unique path from $n$ to every other vertex, determining an orientation of $T$ ''away from $n$''. Thus any edge $a$ of $T$ has a \emph{source} $s(a)$ and a \emph{target} $t(a)$.
Let $a_1,\ldots,a_{n-1}$ be the edges of $T$ in reverse lexicographical order (where we interpret the edges as sets). Using the orientation of $T$, we can define the \emph{target permutation} $\pi_T = i \mapsto t(a_i)$ on the set $[n-1]$.
We furthermore denote the set of \emph{leaves} of $T$ by $l(T)$.

\begin{proposition}\label{proposition: Plücker coords of L^S(X) in cocodim 1}
    Let $X \leq K^n$ be of codimension $1$ and $S \subseteq [n]$ such that $L^S(X)$ is linear. Let $J_1, \ldots, J_{n-1} \in \binom{[n]}{n-2}$ be distinct and sorted in lexicographical order. Let $T$ be the subgraph of $K_n$ with edges $a_i = [n] \setminus J_i$, oriented away from $n$.
    Then the Plücker coordinate of $L^S(X)$ determined by the $J_i$ is
    \begin{equation}
        p_{J_1 \ldots J_{n-1}} = \begin{cases}
            \varepsilon_T \cdot \prod_{v \in [n]}q_{[n] \setminus v}^{\deg_T(v)-1} & \text{ if $T$ is a tree s.t. $S \subseteq l(T)$}, \\
            0 & \text{ otherwise,}
        \end{cases}
    \end{equation}
    where for a tree $T$, oriented away from $n$, the sign $\varepsilon_T$ is defined as 
    $$ \varepsilon_T = (-1)^{|\{i ~:~ s(a_i) ~<~ t(a_i)\}| }\sign(\pi_T). $$
    
\end{proposition}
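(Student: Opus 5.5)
We reduce to computing maximal minors of $V^S$. By Theorem \ref{theorem: L^S(X) = L(X^S) in cocodim 1}, linearity of $L^S(X)$ gives $L^S(X)=L(X^S)$, and the rows of $V^S$ span it. Choose the chart $q_{[n]\setminus n}\neq 0$, i.e.\ $C=[n-1]$, so rows are indexed by $[n-1]$. This is possible only if $n\notin S$; otherwise we relabel, or use the chart $C=[n]\setminus v$ with $v\notin S$ and track the sign. Then $p_{J_1\ldots J_{n-1}}$ is, up to a global scalar, the determinant of the $(n-1)\times(n-1)$ submatrix $M$ of $V^S$ with columns $J_1,\dots,J_{n-1}$.

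Formula \eqref{equation: formula V^S in cocodim 1}, written in terms of edges $a=[n]\setminus J$, says the following. Column $J$ has nonzero entries exactly in rows $i\in a\setminus\{n\}$ with $S\subseteq [n]\setminus(a\setminus i)$. The entry is $\pm q_{[n]\setminus j}$, where $a=\{i,j\}$. So column $a$ carries $\pm q_{[n]\setminus j}$ in row $i$ and $\pm q_{[n]\setminus i}$ in row $j$, except that row $n$ is absent. Thus $M$ is a weighted reduced incidence matrix of the graph $T$ with row $n$ deleted.

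We expand $\det M$ over permutations. A nonzero term is a bijection assigning to each edge an endpoint $\neq n$ as its target. For a graph with $n-1$ edges this exists iff every component that contains a cycle is impossible; counting shows the assignment exists iff $T$ is a spanning tree, and then it is unique, namely the orientation away from $n$. Hence $\det M=0$ unless $T$ is a tree, and for a tree exactly one monomial survives. That monomial is $\prod_a q_{[n]\setminus s(a)}$, since the entry in row $t(a)$ is $q_{[n]\setminus s(a)}$. Each vertex $v$ is a source of $\deg_T(v)-1$ edges, except $n$, which is a source of $\deg_T(n)$ edges. Dividing by the global factor $q_{[n]\setminus n}$, which comes from the choice of chart (Theorem \ref{theorem: L(X) is linear + V & U} only determines coordinates projectively), gives $\prod_v q_{[n]\setminus v}^{\deg_T(v)-1}$.

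The $S$-condition enters as follows. Entry $(t(a),a)$ survives in $V^S$ iff $S\subseteq [n]\setminus s(a)$, i.e.\ no element of $S$ is a source. In a tree oriented from $n$, the vertices that are never sources are exactly the non-root leaves. Hence the term is nonzero iff $S\subseteq l(T)$, with some care if $n$ is itself a leaf, where the chart choice above handles it. If the condition fails, $\det M=0$.

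It remains to get the sign. The entry in row $t(a_i)$ and column $a_i$ has sign $(-1)^{|[t]\cap J_i|}$, where $J_i=[n]\setminus\{s,t\}$ with $s=s(a_i)$, $t=t(a_i)$. This count equals $t-1$ if $s>t$ and $t-2$ if $s<t$. The permutation contributes $\sign(\pi_T)$, after matching the lexicographic order on the $J_i$ with the reverse lexicographic order on the $a_i$. The product of the $(-1)^{t(a_i)-1}$ over all $i$ is $(-1)^{\sum_{t=1}^{n-1}(t-1)}$, which is a global constant, so only the factor $(-1)^{|\{i:s(a_i)<t(a_i)\}|}$ remains and gives $\varepsilon_T$. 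I expect this sign bookkeeping to be the main obstacle. That includes checking the reindexing between rows of $V$ and $C$, and that the global constant sign and scalar can be absorbed projectively, which is consistent across all $T$.
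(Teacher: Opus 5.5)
Your route is essentially the paper's: you take the minors of $V$ in the chart $q_{[n-1]}\neq 0$, use the unique orientation away from $n$ for a tree, and do the same monomial and sign count, with a Leibniz expansion where the paper uses column reductions. However, there is a genuine gap in the non-tree case. The claim that ``counting shows the assignment exists iff $T$ is a spanning tree'' is false. A component count only forces the component of $n$ to be a tree and every other component to have as many edges as vertices. A unicyclic component avoiding $n$ admits exactly two admissible target assignments: the two directions around its cycle, with pendant trees oriented away from the cycle. For instance, take $n=4$ and edges $12,13,23$, so the columns are $J=14,24,34$ and vertex $4$ is isolated. The two cyclic orientations of the triangle give the nonzero Leibniz terms $\pm q_{124}q_{134}q_{234}$, and the minor vanishes only because they cancel (it is a skew-symmetric $3\times 3$ matrix). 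So $\det M=0$ for non-trees is a cancellation, not an absence of terms, and it has to be proved. One option is a sign-reversing involution: reverse the cycle of one fixed unicyclic component. The set of sources is unchanged, hence so are the monomial and its survival in $V^S$. The factor $(-1)^{[s(a)<t(a)]}$ flips on each of the $k$ cycle edges, $\prod_a(-1)^{t(a)-1}$ stays the same, and the permutation is composed with a $k$-cycle, so the net sign is $(-1)^k(-1)^{k-1}=-1$. The other option is what the paper does: for generic $X$ the matroid of $L(X)$ is $\mathcal{M}(K_n)$, so the columns of a non-tree are dependent, and this extends to all $q$ (including $q^S$) because the minor is a polynomial in $q$.

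A smaller point concerns the case $n\in S$. There the chart $C=[n-1]$ is unavailable, since $q^S_{[n-1]}=0$ and the formula for $V^S$ requires $S\subseteq C$. Your ``relabel, or use the chart $[n]\setminus v$ and track the sign'' is not carried out, and it is not a formality, because $\varepsilon_T$ and the lexicographic orders single out the label $n$. You can close it without computing signs. Take $v\notin S$ with $q_{[n]\setminus v}\neq 0$; in that chart your argument yields $q_{[n]\setminus v}\,\varepsilon^{(v)}_T\prod_u q_{[n]\setminus u}^{\deg_T(u)-1}$, with $\varepsilon^{(v)}_T$ computed for the orientation away from $v$. For generic $X$ both charts give Plücker vectors of the same space with identical monomials, so $\varepsilon^{(v)}_T=c_v\varepsilon_T$ for a sign $c_v$ independent of $T$. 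Dividing the chart-$v$ minors at $q^S$ by the global scalar $c_v q^S_{[n]\setminus v}\neq 0$ then gives the claimed formula. The paper instead settles $S=\emptyset$ and evaluates that formula at $q^S$, using $L^S(X)=L(X^S)$. Since $q^S_{[n]\setminus v}=0$ for every $v\in S$, the product $\prod_v (q^S_{[n]\setminus v})^{\deg_T(v)-1}$ equals the claimed expression when $S\subseteq l(T)$ and vanishes otherwise. This needs no case distinction on whether $n$ is a leaf. It does use the $S=\emptyset$ formula at non-generic points, which holds because the formula vector is nowhere zero on $\PP^{n-1}$ (take a star tree) and agrees with the Plücker vector of $L(X)$ on a dense open set.
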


\begin{proof}
    First assume $S = \emptyset$. It is enough to prove the claim on the open set of generic $X$, for which in particular all $q_I$ are assumed nonzero. Here we already know the Plücker coordinate $p_{J_1 \ldots J_{n-1}}$ is zero if $T$ is not a tree. Now assume $T$ is a tree.
    We use formula \eqref{equation: formula V^S in cocodim 1} to describe the entries of a matrix $V$ whose row space is $L(X)$. 
    Let $v_i$ denote the columns of $V$ indexed by $J_i$. Consider an edge incident to $n$, say $a_i = \{m,n\}$. Then the corresponding column $v_i$ is $(-1)^{m-1}q_{[n-1]}e_{m}$. This can be rewritten as
    $$ v_i = (-1)^{|[t(a_i)]\setminus a_i|}q_{[n] \setminus s(a_i)}e_{t(a_i)} .$$
    Note that we can bring any column $v_j$ to this form by a series of column reductions.
    If $a_j$ is adjacent to $a_i$, then we reduce $v_j$ with $v_i$ to bring it to the desired form. Now we just follow the paths starting at $n$ and reduce columns along the way. By multi-linearity of determinants this implies
    $$
    p_{J_1 \ldots J_{n-1}} = \sign(\pi_T)\prod_{i = 1}^{n-1} (-1)^{|[t(a_i)]\setminus a_i|}q_{[n] \setminus s(a_i)} .
    $$
    Any vertex $v < n$ has precisely $1$ incoming edge, so the number of edges $a_i$ with source $v$ is $\deg_T(v)-1$. For $v = n$ there are only outgoing edges. Hence the monomial in $p_{J_1 \ldots J_{n-1}}$ is 
    $$ q_{[n-1]}\cdot\prod_{v \in [n]}q_{[n] \setminus v}^{\deg_T(v)-1} .$$ 
    Since we can globally rescale by $q_{[n-1]}^{-1}$ this is what we need. As for the sign, note that 
    $$ (-1)^{|[t(a_i)]\setminus a_i|} = (-1)^{t(a_i) - 1 + [s(a_i) ~<~ t(a_i)] }, $$
    where we define $[\text{statement}] = 1$  if and only if the statement in the parenthesis is true. Any $v < n$ occurs precisely once as target of an edge, thus the sign in $p_{J_1 \ldots J_{n-1}}$ is 
    $$ (-1)^{\frac{(n-2)(n-1)}{2} + |\{i ~:~ s(a_i) ~<~ t(a_i)\}| }\sign(\pi_T) .$$
    Up to a global rescaling by $(-1)^{\frac{(n-2)(n-1)}{2}}$, this is what we claimed. To derive the claim for nonempty $S$, we use Theorem \ref{theorem: L^S(X) = L(X^S) in cocodim 1} and write $L^S(X) = L(X^S)$. Clearly a tree gives a vanishing Plücker coordinate for $L(X^S)$ if it contains a vertex $v \in S$ of degree greater than one.
\end{proof}

As immediate consequence it follows that in cocodim 1, the locus of generic spaces is precisely the common non vanishing set of the $q_I$. Moreover, if $X$ has trivial valuations, then so does $L^S(X)$, whenever it is a linear space.

\begin{example}\label{example: plücker coords for L(X) in cocodim 1}
    Let $n = 6$ and $d = 4$. Assume $X \leq K^6$ is of codimension $1$ and $S \subseteq [6]$ is such that $L^S(X)$ is linear. 
    We wish to calculate the Plücker coordinate $p_{J_1 \ldots J_5 }$ of $L^S(X)$ with indices $J_i$ given by: $1245, 1246, 1256,2345, 3456.$ The corresponding graph is a tree $T$ whose orientation away from $6$ is illustrated in Figure \ref{figure: plücker coord from tree} below. We obtain the permutation $\pi_T = (1 ~ 3 ~ 4)(5 ~ 2)$ with $\sign \pi_T = -1$. Moreover, we count $3$ increasing edges, so the overall sign will be $\varepsilon_T = 1$. As for the monomial only the vertices $1,3$ and $6$ contribute factors, since the others are leaves. Thus we get $p_{J_1 \ldots J_5 } = q_{12345}q_{12456}^2q_{23456}$. This Plücker coordinate vanishes if $S$ contains any of $1,3$ or $6$.
\end{example}

\begin{figure}[ht]
\begin{center}

\begin{tikzpicture}[scale = 1.5,
                    edge/.style={black, line width=0.9pt, line cap=round},
                    color = {black}]
  

\tikzstyle{node}=[text=black, inner sep=3pt, rectangle, rounded corners=3pt,fill=white, draw=none]
\tikzstyle{every path}=[shorten <=1mm, shorten >=2.5mm]

\coordinate (1) at (-.8,0.2);
\coordinate (2) at (-1.6,-.6);
\coordinate (3) at (.8,0.2);
\coordinate (4) at (1.6,-.6);
\coordinate (5) at (0,-.6);
\coordinate (6) at (0,1);

\draw[-Stealth] (6) -- (1) node [midway, above left=-2pt] {$a_4$};
\draw[-Stealth] (6) -- (3) node [midway, above right=-2pt] {$a_1$};
\draw[-Stealth] (1) -- (2) node [midway, above left=-2pt] {$a_5$};
\draw[-Stealth] (3) -- (4) node [midway, above right=-2pt] {$a_3$};
\draw[-Stealth] (3) -- (5) node [midway, above left=-2pt] {$a_2$};

\node[node] at (1) {\small $1$};
\node[node] at (2) {\small $2$};
\node[node] at (3) {\small $3$};
\node[node] at (4) {\small $4$};
\node[node] at (5) {\small $5$};
\node[node] at (6) {\small $6$};
   
\end{tikzpicture}

\caption{An oriented tree corresponding to a Plücker coordinate of $L(X)$.}\label{figure: plücker coord from tree}

\end{center}
\end{figure}

Proposition \ref{proposition: Plücker coords of L^S(X) in cocodim 1} says that the matroid associated to $L^S(X)$ arises from $\mathcal{M}(K_n)$ essentially by fixing certain vertices to be leaves.

\begin{corollary}
    Let $X \leq K^n$ be generic of codimension $1$ and $S \subseteq [n]$. 
    The bases of $\mathcal{M}(L^S(X))$ correspond to spanning trees of the complete graph $K_n$ on $n$ vertices s.t. any $v \in S$ is a leaf.
\end{corollary}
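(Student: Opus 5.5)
The plan is to read the corollary off Proposition \ref{proposition: Plücker coords of L^S(X) in cocodim 1}, using the fact that a set of columns is a basis of the linear matroid exactly when the corresponding Plücker coordinate is nonzero. Since $L^S(X)$ has dimension $n-2$ as a projective space, its underlying vector space has dimension $n-1$. So the bases of $\mathcal{M}(L^S(X))$ are the $(n-1)$-subsets $\{J_1,\ldots,J_{n-1}\}\subseteq\binom{[n]}{n-2}$ with $p_{J_1\ldots J_{n-1}}\neq 0$.

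First I check that $L^S(X)$ is linear, so that the proposition applies. For generic $X$ all $q_I$ are nonzero. If $S\neq[n]$, then $|S|\le n-1$, so some $C\in\binom{[n]}{n-1}$ contains $S$ and $q_C\neq 0$. Theorem \ref{theorem: L^S(X) = L(X^S) in cocodim 1} then gives linearity.

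Next, I identify each $J_i$ with the edge $a_i=[n]\setminus J_i$ of $K_n$. This is a bijection between $(n-1)$-subsets of $\binom{[n]}{n-2}$ and $(n-1)$-edge subgraphs $T$ of $K_n$. By the proposition, $p_{J_1\ldots J_{n-1}}$ vanishes unless $T$ is a tree with $S\subseteq l(T)$. In that case it equals
\[
\varepsilon_T\prod_{v}q_{[n]\setminus v}^{\deg_T(v)-1}.
\]
This is nonzero because $\varepsilon_T=\pm1$, every $q_I\neq0$ by genericity, and all exponents are nonnegative. Since a tree with $n-1$ edges on $n$ vertices is spanning, the bases are exactly the spanning trees in which every vertex of $S$ is a leaf.

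The remaining steps are minor. The ordering convention (lexicographic order of the $J_i$) only affects the sign, not whether $p$ vanishes. The degenerate case $S=[n]$ must be treated separately, since then no spanning tree of $K_n$ has all vertices as leaves once $n\geq3$, and linearity fails anyway. The corollary implicitly assumes $L^S(X)$ is linear, in line with the proposition's hypothesis. The only genuine point to verify is that genericity forces all the $q_I$ to be nonzero, and this is part of Definition \ref{definition: generic spaces and matroids}.
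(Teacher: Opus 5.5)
Your argument is correct and follows the paper's route: the paper states the corollary as an immediate consequence of the cocodimension-one Plücker coordinate formula, since for generic $X$ every $q_I\neq 0$, so $p_{J_1\ldots J_{n-1}}=\varepsilon_T\prod_{v}q_{[n]\setminus v}^{\deg_T(v)-1}$ is nonzero precisely when $T$ is a spanning tree with $S\subseteq l(T)$. One small correction to your side remark: you are right that $S=[n]$ must be excluded, but for $n=3$ the reason is not that linearity fails ($\Gr(1,3)=\PP^2$ is linear); rather, the statement itself is false there, because the matroid is free with unique basis the triangle $12,13,23$, while no spanning tree of $K_3$ has all three vertices as leaves.
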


\subsection{$L^S$-spaces in higher codimension}\label{section: 2 - higher codim}

We now investigate the situation for $L^S$-spaces where $X$ is of dimension $d+1$ possibly smaller than $n-1$.
Our starting point is the algebraic result \cite[Theorem 3]{JMRS}, which we generalize motivated by our results in cocodim 1.

\begin{theorem}\label{theorem: algebraic statement}
    Replace the $q_I$ in the $S$-incidence relations by variables $Q_I$ (see Equation
    \eqref{equation: S-incidence relations}) and assume $S \subseteq C$ for some $C \in \binom{[n]}{d+1}$. Then the Plücker relations are contained in the saturation of the $S$-incidence ideal by $Q_C$. Thus $\mathcal{I}_{pl} \subseteq (\mathcal{I}_{in}^S:\langle Q_C\rangle^\infty)$.
\end{theorem}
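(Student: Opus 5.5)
We need to show that for every Plücker relation $R_{A,B}$ (with $A \in \binom{[n]}{d-1}$, $B \in \binom{[n]}{d+1}$) some power $Q_C^N R_{A,B}$ lies in the ideal $\mathcal{I}_{in}^S$ generated by the $S$-incidence relations, with the $Q_I$ treated as indeterminates. The plan is to imitate the proof of \cite[Theorem 3]{JMRS} for $S=\emptyset$, which we take to proceed by an explicit identity. There, $Q_C R_{A,B}$ is written, up to sign, as a combination $\sum \pm P_{\ast} I_{A',B'}$ of incidence relations with $B' = C\,j$, plus terms that vanish modulo the Plücker relations among the $Q$'s. Since the $Q_I$ are variables, we will instead work in the polynomial ring $K[Q,P]$ modulo the Plücker ideal of the $Q$'s, or else verify that no $Q$-Plücker relations are needed. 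The route is to keep the same combination of incidence relations and track which terms disappear when indices in $S$ are dropped.

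The first step is to linearise using $Q_C$. Informally, on the open set $Q_C\neq 0$, the incidence relations $I^S_{A',Cj}$ for $j\notin C$ allow us to solve for the coordinates $P_{A'j}$ with $j \notin C$ and $j\notin A'$. Since $S\subseteq C$, every such $j$ lies outside $S$, so these relations keep all their terms: they are literally the same as the unprojected $I_{A',Cj}$. This is the same phenomenon observed in cocodim 1, where the hypothesis $S\subseteq C$ prevents the loss of terms described in Remark \ref{remark: q_C in U vs U^S}. Concretely, I would show by induction on $|J\setminus C|$ that for every $J\in\binom{[n]}{d}$ we have
\[ Q_C^{|J\setminus C|} P_J \equiv \text{(a polynomial in the $Q$'s times coordinates } P_{J'} \text{ with } J'\subseteq C) \pmod{\mathcal{I}_{in}^S}. \]
Each reduction step uses exactly one relation $I^S_{A',Cj}$ with $j\in J\setminus C$ and $A'=J\setminus j$, and it lowers $|J\setminus C|$ by one.

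The second step is to substitute these normal forms into $Q_C^N R_{A,B}$. This rewrites the relation, modulo $\mathcal{I}_{in}^S$, as a quadratic expression in the $d+1$ coordinates $P_{C\setminus c}$ for $c\in C$, with coefficients in the $Q$'s. These are exactly the same normal forms as in the case $S=\emptyset$, because the relations used never involve an index in $S$ being removed. Hence the resulting expression coincides with the one in the $S=\emptyset$ case. By \cite[Theorem 3]{JMRS}, $R_{A,B}$ lies in the saturation of $\mathcal{I}_{in}$ by $Q_C$, and on the chart $Q_C \neq 0$ the ideal $\mathcal{I}_{in}$ is generated by the same reduction relations. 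So that expression must reduce to zero, possibly after multiplying by a further power of $Q_C$. A direct alternative check is that every vector of values of $P_{C\setminus c}$ extends, through the normal forms, to the Plücker vector of a $d$-subspace of the row space defined by $Q$; therefore any Plücker relation vanishes identically on the normal forms.

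The main obstacle is the subtlety that the ideal $\mathcal{I}_{in}^S$ contains fewer generators than $\mathcal{I}_{in}$. We must therefore make sure that the proof of \cite[Theorem 3]{JMRS} (or the argument just sketched) only ever uses incidence relations of the form $I_{A',Cj}$ with $j\notin C$, and never others such as $I_{A',B'}$ with $Q_C$ appearing as a coefficient of an index in $S$. I would address this by proving directly that, after inverting $Q_C$, the ideal $\mathcal{I}_{in}$ is generated by the $I_{A',Cj}$, $j \notin C$. This is a statement about the localization that can be checked by rank: these relations already cut out a linear space of the correct dimension $d+1$ in the affine chart. Once this is established, the containment $\mathcal{I}_{pl}\subseteq(\mathcal{I}_{in}^S:\langle Q_C\rangle^\infty)$ follows because $\mathcal{I}_{in}^S$ contains all of these $I_{A',Cj}$ unchanged.
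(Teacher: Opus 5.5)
Your Step 1 is fine, but the claim everything afterwards rests on is false: $I^S_{A',Cj}$ is \emph{not} the same as $I_{A',Cj}$. From $S\subseteq C$ you only get $j\notin S$, which guarantees that the pivot term $\pm Q_CP_{A'j}$ survives. The summation index of $I^S_{A',Cj}$ is $i\in Cj\setminus(A'\cup S)$, so every term $Q_{Cj\setminus i}P_{A'i}$ with $i\in(S\cap C)\setminus A'$ is deleted. For $n=4$, $d=1$, $S=\{1\}$, $C=\{1,2\}$ one has $I_{\emptyset,123}=-Q_{23}P_1+Q_{13}P_2-Q_{12}P_3$ but $I^S_{\emptyset,123}=Q_{13}P_2-Q_{12}P_3$. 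So your normal forms are not those of the case $S=\emptyset$. Also, $\mathcal{I}^S_{in}$ does not contain the $I_{A',Cj}$: in the example every element of $\mathcal{I}^S_{in}$ vanishes under $P_2=P_3=P_4=0$, while $Q_{12}^NI_{\emptyset,123}$ does not. Hence both ``the resulting expression coincides with the one in the $S=\emptyset$ case'' and your final sentence fail, exactly where $S$ enters.

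Independently, your localization claim is false when the $Q_I$ are indeterminates. Take the same example with $S=\emptyset$. Modulo $I_{\emptyset,123}$ and $I_{\emptyset,124}$ one finds $Q_{12}I_{\emptyset,134}\equiv-P_1(Q_{12}Q_{34}-Q_{13}Q_{24}+Q_{14}Q_{23})$. This is nonzero in the localized quotient $\cong K[P_1,P_2,Q][Q_{12}^{-1}]$. Your rank count ``dimension $d+1$'' holds only when $Q$ is a Plücker vector. Passing to the quotient by the Plücker ideal of the $Q$'s would prove a weaker statement than the theorem, whose $Q_I$ are free. So invoking \cite[Theorem 3]{JMRS} does not give an identity that uses only the relations $I_{A',Cj}$.

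The argument can be repaired, and would then be a genuinely different route from the paper's. The paper runs a double induction on $|A\setminus C|$ and $|B\setminus C|$, using the explicit identities in the Appendix (adjusted versions of \cite[Lemmas 30--32]{JMRS}) whose sums are restricted to indices outside $S$.

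The missing observation is that, using $S\subseteq C$, one has $I^S_{A',Cj}(Q)=I_{A',Cj}(Q^S)$ for $j\notin C$. Here $Q^S$ sets $Q_I=0$ whenever $S\not\subseteq I$, and $Q^S_C=Q_C$.

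Then make your ``direct alternative check'' precise for arbitrary $Q$:
\begin{itemize}
\item Fix any value of $Q$ with $Q_C\neq0$. The relations $I_{A',Cj}$ involve only $Q_C$ and the $Q_{Cj\setminus i}$.
\item Up to a scalar they are therefore the incidence relations of the unique $(d+1)$-space $X'$ that has a basis matrix with the identity in columns $C$ and whose coordinates $q_C$, $q_{Cj\setminus i}$ are proportional to $Q_C$, $Q_{Cj\setminus i}$.
\item Hence they vanish on $\bigwedge^d X'$. This space is $(d+1)$-dimensional, consists of decomposable vectors, and by Step 1 maps injectively, hence bijectively, onto the coordinates $(P_{C\setminus c})_{c\in C}$.
\item So the reduced expression vanishes on a dense set, which gives $Q_C^NR_{A,B}\in\langle I_{A',Cj}\rangle$ in $K[P,Q]$.
\end{itemize}
Applying the substitution $Q\mapsto Q^S$, which fixes $R_{A,B}$ and $Q_C$, yields $Q_C^NR_{A,B}\in\langle I^S_{A',Cj}\rangle\subseteq\mathcal{I}^S_{in}$.

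This replaces the paper's explicit certificates with a density argument. It rests on the elementary fact that every element of $\bigwedge^d$ of a $(d+1)$-dimensional space is decomposable.
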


The proof of this theorem consists of a double induction using a series of technical lemmata which are direct generalizations of their counterparts in the $S = \emptyset$ case. We carefully restate them in the Appendix, but defer the reader to \cite{JMRS} for proofs, as they only require small adjustments. \\

As a corollary we obtain a partial generalization of Theorem \ref{theorem: L^S(X) = L(X^S) in cocodim 1}. 

\begin{corollary}\label{corollary: L^S(X) is linear}
    Suppose $q$ and $S$ are such that there is $q_C \neq 0$ with $S \subseteq C$. Then $L^S(X)$ is a linear projective variety of dimension at most $d$, cut out by the $S$-incidence relations \eqref{equation: S-incidence relations}, i.e. $L^S(X)$ equals the kernel of $U^S$.
\end{corollary}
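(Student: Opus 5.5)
We will derive the corollary from Theorem \ref{theorem: algebraic statement} by specializing the variables $Q_I$ to the Plücker coordinates $q_I$ of $X$. By definition, $L^S(X)$ is the zero set in $\PP^{\binom{n}{d}-1}$ of $\mathcal{I}_{pl}$ and the specialized $S$-incidence ideal $\mathcal{I}^S_{in}(q)$. Since $\mathcal{I}^S_{in}(q)$ is generated by linear forms, its zero set is the projectivized kernel of $U^S$. It therefore suffices to show that every Plücker relation vanishes on $\ker U^S$. Then the Plücker relations can be dropped, $L^S(X) = \mathbb{P}(\ker U^S)$, and in particular $L^S(X)$ is linear.

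\textbf{Step 1 (specialization).} Theorem \ref{theorem: algebraic statement} gives, for each Plücker relation $R_{A,B}$, an integer $N$ with $Q_C^N R_{A,B} \in \mathcal{I}_{in}^S$ in the polynomial ring $K[Q_I, P_J]$. Concretely,
\begin{equation*}
Q_C^N R_{A,B} = \sum_k g_k \, I^S_{A_k,B_k}
\end{equation*}
with polynomial coefficients $g_k$. Applying the ring homomorphism $Q_I \mapsto q_I$, which fixes the $P_J$, yields $q_C^N R_{A,B} \in \mathcal{I}^S_{in}(q) \subseteq K[P_J]$. Since $q_C \neq 0$, $R_{A,B}$ vanishes on every point of $\ker U^S$. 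This gives $\mathbb{P}(\ker U^S) \subseteq \Gr(d,n)$, hence $L^S(X) = \mathbb{P}(\ker U^S)$. This step should be routine; the care lies only in noting that $q_C$ is a nonzero scalar after specialization.

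\textbf{Step 2 (dimension bound).} This is the main point needing an idea. I will exhibit a $d$-dimensional linear subvariety of $\PP^{\binom{n}{d}-1}$ containing $L^S(X)$.

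The first approach is to show that $\ker U^S \subseteq \ker U(q^S)$, where $q^S$ is obtained from $q$ by zeroing every $q_I$ with $S \not\subseteq I$. One would then compare with $L(X')$ for a suitable $X'$ via Theorem \ref{theorem: L(X) is linear + V & U}. However, $q^S$ need not be a Plücker vector in higher codimension, so this is delicate.

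A more robust route works geometrically, using that $L^S(X)$ is now known to be a linear subspace contained in $\Gr(d,n)$. A linear projective space lying inside $\Gr(d,n)$ is of one of two standard types:
\begin{itemize}
\item the $d$-subspaces contained in a fixed $(d+1)$-space $Z$, of projective dimension $d$; or
\item the $d$-subspaces containing a fixed $(d-1)$-space $Z'$, of projective dimension $n-d$.
\end{itemize}
Given $q_C \neq 0$ with $S \subseteq C$, I would show the second type is ruled out, that is, not every $Y \in L^S(X)$ contains a common $(d-1)$-space, unless the family is small. Concretely, take the affine chart of $\Gr(d,n)$ given by $P_{J} \neq 0$ for some $J \subset C$. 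In this chart, the condition $\pr_S(Y) \subseteq X$ forces the row vectors of $Y$ to lie in the affine space $\pr_S^{-1}(X)$. Because $S \subseteq C$ and $X$ projects isomorphically onto the coordinates in $C$, I expect to be able to parametrize $L^S(X)$ in this chart by at most $d$ affine parameters, by counting the free coordinates.

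Alternatively, if this chart argument becomes messy, I would try to bound $\operatorname{rank} U^S$ from below by at least $\binom{n}{d}-d-1$. For this I would exhibit enough linearly independent $S$-incidence relations, using those $I^S_{A,B}$ with $B = Cj$, which contain $q_C$ with nonzero coefficient. This direct rank count is where I expect the real work to lie.
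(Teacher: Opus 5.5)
Your Step 1 is correct and is essentially the paper's argument. The paper phrases the specialization geometrically, via the projections of $\PP^N\times\PP^M$. Your algebraic version is fine, and it even gives $R_{A,B}\in\mathcal{I}^S_{in}(q)$, since $q_C^N$ is a unit.

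The genuine gap is Step 2: the dimension bound is never proved. You sketch three strategies but complete none. The chart argument ends with ``I expect to be able to parametrize \ldots by at most $d$ affine parameters.'' The second type in your classification is dismissed only with ``unless the family is small.'' The rank count is explicitly deferred as ``where the real work lies.'' Also, the classification you quote describes the \emph{maximal} linear subspaces of $\Gr(d,n)$; an arbitrary one is only \emph{contained} in one of the two types. In any case it is more machinery than this needs.

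The paper closes the gap with your third strategy, following \cite[Corollary 2]{JMRS}, and the argument is short. For each $J\not\subseteq C$, pick $j\in J\setminus C$ and take the row $I^S_{J\setminus j,\,Cj}$ of $U^S$.
\begin{itemize}
\item Since $j\notin C\supseteq S$ and $j\notin J\setminus j$, the term $\pm q_C P_J$ survives the $S$-degeneration. This is the only point specific to $S\neq\emptyset$.
\item Every other term is $\pm q_{Cj\setminus i}P_{(J\setminus j)i}$ with $i\in C$. Its index therefore has one fewer element outside $C$.
\end{itemize}
Ordering the $J$ by $|J\setminus C|$ gives a triangular submatrix of size $\binom{n}{d}-(d+1)$ with diagonal entries $\pm q_C$. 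Hence $\dim\ker U^S\le d+1$.

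Alternatively, your chart idea can be finished without charts or any classification, by computing one dimension. We have $\pr_S^{-1}(X)=(X\cap\langle e_i : i\notin S\rangle)\oplus\langle e_i : i\in S\rangle$. Because $X\to K^C$ is an isomorphism and $S\subseteq C$, the coordinate functions $x_s$ for $s\in S$ are independent on $X$. So $\dim \pr_S^{-1}(X)=(d+1-|S|)+|S|=d+1$. Every $Y\in L^S(X)$ is then a hyperplane of this fixed $(d+1)$-space, so $L^S(X)\subseteq \Gr(d,\pr_S^{-1}(X))\cong\PP^d$. Combined with your Step 1, this gives the bound; in fact it gives equality, and one checks $\pr_S^{-1}(X)=X^S$.

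Finally, your worry in the first strategy is misplaced. Under the hypothesis, $q^S$ is the Plücker vector of $X^S$. The real difficulty there is that $U^S(q)\neq U(q^S)$ in higher codimension (Remark \ref{remark: q_C in U vs U^S}).
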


\begin{proof}
    Let $N = \binom{n}{d}-1$ and $M = \binom{n}{d+1}-1$. We view both $\mathcal{I}_{in}^S$ and $\mathcal{I}_{pl}$ as ideals in the multi-homogeneous coordinate ring of $\PP^N \times \PP^M$ i.e.\ the polynomial ring generated by the variables $P_I$ and  $Q_J$ with $I \in \binom{[n]}{d}, J \in \binom{[n]}{d+1}$. We write $\mathcal{I}_{in}^S(q)$ to mean the ideal in $K[P_I : I \in \binom{[n]}{d}]$ obtained from $\mathcal{I}_{in}^S$ by specializing the variables $Q_I$ to the Plücker coordinates $q_I$ of $X$.
    
    The geometry behind the containment of ideals in Theorem \ref{theorem: algebraic statement} is 
    $$ \Gr(d,n) \times \PP^M \supseteq \mathcal{V}_{\PP^N \times \PP^M}(\mathcal{I}_{in}^S) \setminus \PP^N \times \mathcal{V}_{\PP^M}(Q_c).$$
    Let $\pi_1, \pi_2$ denote the projections of $\PP^N \times \PP^M$ onto the first and second factor respectively. Then we have 
    $$ \mathcal{V}_{\PP^M}(\mathcal{I}_{in}^S(q)) = \pi_1(\mathcal{V}_{\PP^N \times \PP^M}(\mathcal{I}_{in}^S) \cap \pi_2^{-1}(q) ) \subseteq \Gr(d,n).$$
    Thus $$ L^S(X) = \mathcal{V}_{\PP^M}(\mathcal{I}_{in}^S(q)) \cap \Gr(d,n) = \mathcal{V}_{\PP^M}(\mathcal{I}_{in}^S(q)) $$
    is a linear space. Of course we can rewrite $L^S(X)$ as kernel of the matrix $U^S$. The claim about the dimension follows by identifying a lower triangular matrix submatrix of appropriate size in $U^S$ with diagonal entries $\pm q_C$. This is done exactly as in \cite[Corollary 2]{JMRS}.  
\end{proof}

We follow our approach in cocodim 1 as much as possible. For this, we again write $L^S(X)$ as the row space of a suitable matrix $V^S$. Suppose $X$ satisfies the premise of the last corollary, then it can be written as row space of $W \in K^{d+1 \times n}$ s.t.\ the submatrix of $W$ with columns indexed by $C$ is the identity matrix. We modify this matrix to define $W^S$ row-wise as follows. For $i \in [d+1]$ we set
\begin{equation}\label{definition: W^S}
    W^S_{i,-} = 
    \begin{cases}
        e_i^T   & \text{if } i \in S, \\
        W_{i,-} & \text{otherwise},
    \end{cases}
\end{equation}
where $e_i \in K^n$ is the standard unit vector and $W_{i,-}$ denotes the $i$-th row of $W$ and similar for $W^S$. The submatrix of $W^S$ with columns indexed by $C$ is still the identity matrix, so the row space $X^S$ of $W^S$ is of dimension $d+1$.

\begin{theorem}\label{theorem: L^S(X) = L(X^S)}
    Suppose $X$ and $S$ are such that there is $q_C \neq 0$ with $S \subseteq C$. Then in the above notation $L^S(X) = L(X^S)$. Thus $L^S(X)$ is of dimension $d$ and may be written as the row space of $V^S \in K^{d+1 \times \binom{n}{d}}$ defined via
    \begin{equation*}
        V^S_{i,J} = \det(e_i | (W^S)^J).
    \end{equation*}
\end{theorem}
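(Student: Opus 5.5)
The plan is to show directly that the condition $\pr_S(Y)\leq X$ on a $d$-dimensional subspace $Y$ is equivalent to $Y\leq X^S$. Concretely, I will prove the identity of subspaces $\pr_S^{-1}(X)=X^S$. Once that holds, $L^S(X)=\{Y\in\Gr(d,n):Y\leq \pr_S^{-1}(X)\}=L(X^S)$ as sets. Everything else then follows from Theorem \ref{theorem: L(X) is linear + V & U} applied to the $(d+1)$-dimensional space $X^S$, which is the row space of $W^S$. That theorem gives that $L(X^S)$ is a $d$-dimensional linear projective subvariety and that it is the row space of the matrix with entries $\det(e_i\mid (W^S)^J)$, which is exactly $V^S$. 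As in the definition of $W^S$, I identify the row index set $[d+1]$ with $C$ in order-preserving fashion. This is what makes the condition ``$i\in S$'' meaningful, and it is where the hypothesis $S\subseteq C$ enters.

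For the subspace identity, let $W_i$ ($i\in C$) be the rows of $W$, so that $(W_i)_c=\delta_{ic}$ for $c\in C$. Put $X'=\langle W_i : i\in C\setminus S\rangle$; then $X^S=X'+\langle e_s:s\in S\rangle$. There are two steps.

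\emph{Step 1: $\pr_S(X^S)\subseteq X$.} Every $x\in X'$ has $x_s=0$ for $s\in S$, because $S\subseteq C$ and the $C$-block of $W$ is the identity. Hence $\pr_S(x)=x\in X$. Moreover $\pr_S(e_s)=0$ for $s\in S$, so $\pr_S(X^S)\subseteq X$.

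\emph{Step 2: $\pr_S^{-1}(X)\subseteq X^S$.} Let $v$ satisfy $\pr_S(v)\in X$. Then $x:=\pr_S(v)$ lies in $X$ and has vanishing $S$-coordinates. Write $x=\sum_{i\in C}c_iW_i$. Reading off the coordinate $s\in S\subseteq C$ gives $c_s=0$, so $x\in X'$. Then $v=x+\sum_{s\in S}v_se_s\in X^S$.

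Together, the two steps give $\pr_S^{-1}(X)=X^S$. Since the $C$-columns of $W^S$ still form the identity, $\dim X^S=d+1$, so the cited theorem applies.

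I expect no serious obstacle; the only delicate point is bookkeeping. One must check that the row replacement in $W^S$ uses the identification of rows with $C$. One must also check that the normalization of $W$ with identity in the $C$-columns is available, which is precisely the hypothesis $q_C\neq 0$. In particular, the statement does not need Theorem \ref{theorem: algebraic statement} or Corollary \ref{corollary: L^S(X) is linear}. If desired, consistency with Corollary \ref{corollary: L^S(X) is linear} can be remarked afterwards: the kernel of $U^S$ then coincides with the row space of $V^S$, and the upper bound $d$ on the dimension there becomes an equality.
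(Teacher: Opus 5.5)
Your proof is correct, and it takes a genuinely different route from the paper's. The paper first uses Corollary~\ref{corollary: L^S(X) is linear}, and hence the saturation result Theorem~\ref{theorem: algebraic statement}, to know in advance that $L^S(X)$ is a linear space of dimension at most $d$. It then only checks $\pr_S(Y_j)\leq X$ for the $d+1$ spaces $Y_j$ obtained by deleting the $j$-th row of $W^S$; their Pl\"ucker vectors are the rows of $V^S$. Linearity gives $L(X^S)\subseteq L^S(X)$, and the dimension bound gives equality.

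You instead prove the subspace identity $\pr_S^{-1}(X)=X^S$, which yields both inclusions at once by elementary linear algebra. Your Step~1 is the paper's inclusion, established for all $Y\leq X^S$ simultaneously, and your Step~2 replaces the dimension count. Your route is self-contained and shows that the linearity and the exact dimension $d$ of $L^S(X)$ do not depend on Theorem~\ref{theorem: algebraic statement} at all. It also explains the hypothesis. In general $\pr_S^{-1}(X)=(X\cap \operatorname{im}\pr_S)\oplus\ker\pr_S$, and this has dimension $d+1$ precisely when $S$ is independent in the matroid of $X$, i.e.\ when $S\subseteq C$ for some $q_C\neq 0$.

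What your argument does not recover is the stronger fact in the Corollary that the $S$-incidence relations alone cut out $L^S(X)$, i.e.\ $L^S(X)=\ker U^S$. This cannot be read off from Theorem~\ref{theorem: L(X) is linear + V & U} applied to $X^S$, because outside cocodim $1$ one generally has $U^S(q)\neq U(q^S)$. For example, take $n=5$, $d=2$, $S=\{1\}$ and generic $X$: the row indexed by $(A,B)=(\{2\},\{2,3,4,5\})$ is nonzero in $U^S(q)$ but zero in $U(q^S)$. So you are right to treat that point only as a remark relying on the Corollary.
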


\begin{proof}
    For dimensional reasons it suffices to prove the inclusion $L^S(X) \supseteq L(X^S)$. Consider the spaces $Y_j$ for $j = 1, \ldots, d+1$ defined as the row space of $W^S$ without its $j$-th row. Recall that $\pr_S$ is the projection vanishing on the coordinates indexed by $S$.
    Now clearly $\pr_S(Y_j) = \langle ~W_{i,-} : i \notin Sj ~\rangle_K \leq X$ so $Y_j \in L^S(X)$. 
    Note that the Plücker vector of $Y_j$ is precisely the $j$-th row of $V^S$, i.e. $Y_1, \ldots, Y_{d+1}$ form a basis of $L(X^S)$ considered as vector space.
\end{proof}

We remark that the formula \eqref{equation: formula V^S in cocodim 1} remains valid. Indeed, it is readily verified that just like in cocodim 1, the Plücker coordinates of $X^S$ are given by $q^S$ which is obtained from $q$ by setting all $q_I$ to zero for which $S \subseteq I$ fails.
Hence Proposition \ref{proposition: matroid of L^S(X) in cocodim 1} immediately generalizes to the case at hand. 
Note however that by far not every element in $\Gr(d+1,n)$ is the $S$-degeneration $X^S$ of some generic $X$. 
Thus we need to take a little care when generalizing Proposition \ref{proposition: matroid of L^S(X) in cocodim 1}:

\begin{proposition}\label{proposition: matroid of L^S(X)}
    Suppose $X$ is of dimension $d+1$ and $S \subseteq [n]$ such that there is $q_C \neq 0$ with $S \subseteq C$. Then the linear matroid $\mathcal{M}(L^S(X))$ decomposes into a direct sum 
    $$\mathcal{M}(L^S(X)) = \mathcal{M}_S \oplus \mathcal{M}_{S,0} \oplus \bigoplus_{a \in S} \mathcal{M}_{S,a} .$$
    Here, $\mathcal{M}_S$ is the restriction of $\mathcal{M}(L(X))$ to those $d$-sets containing all of $S$.
    $\mathcal{M}_{S,0}$ is the rank zero matroid on the $d$-sets missing at least two elements of $S$. 
    Finally, for $a \in S$ we have a rank one matroid $\mathcal{M}_{S,a}$ whose ground set consists of the $d$-sets missing precisely $a$. An element $J$ of $\mathcal{M}_{S,a}$ is independent if and only if $q_{Ja} \neq 0$.
\end{proposition}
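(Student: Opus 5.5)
By Theorem \ref{theorem: L^S(X) = L(X^S)}, $L^S(X)=L(X^S)$ is the row space of $V^S$. The matroid $\mathcal{M}(L^S(X))$ is therefore the column matroid of $V^S$. The plan is to mimic the proof of Proposition \ref{proposition: matroid of L^S(X) in cocodim 1}: show that, after reordering rows and columns, $V^S$ is block diagonal with blocks indexed by the same partition of the ground set as there.

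We index the rows of $V^S$ by $C$, identifying $[d+1]$ with $C$ in order-preserving fashion, where $q_C\neq 0$ and $S\subseteq C$. The remark following Theorem \ref{theorem: L^S(X) = L(X^S)} says that formula \eqref{equation: formula V^S in cocodim 1} stays valid: $V^S_{i,J}=\pm q_{Ji}$ if $i\notin J$ and $S\subseteq Ji$, and $V^S_{i,J}=0$ otherwise. To justify this, I would recompute $\det(e_i\mid (W^S)^J)$ by Laplace expansion along the first column. This gives $\pm$ the minor of $W^S$ on rows $C\setminus i$ and columns $J$. That minor equals, up to the standard sign in \eqref{equation: formula for V}, the Plücker coordinate $q^S_{Ji}$ of $X^S$. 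The claim that $q^S_I=q_I$ if $S\subseteq I$ and $q^S_I=0$ otherwise follows from the shape of $W^S$: its rows indexed by $a\in S$ are unit vectors $e_a$. So a maximal minor on columns $I$ vanishes unless $a\in I$ for every $a\in S$. If $S\subseteq I$, expanding along these unit rows shows the minor agrees with the corresponding minor of $W$; this uses that $W^C$ is the identity, so the rows $a\in S$ of $W$ restricted to $I$ can be reduced to $e_a$ without changing the minor.

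We then split the columns into $E_S=\{J: S\subseteq J\}$, $E_{S,0}=\{J:|S\setminus J|\geq 2\}$ and $E_{S,a}=\{J: S\setminus J=\{a\}\}$.
\begin{itemize}
\item For $J\in E_{S,0}$, no $i$ gives $S\subseteq Ji$, so the column is zero and $J$ is a loop.
\item For $J\in E_{S,a}$, the only possibly nonzero entry is in row $i=a$, and it equals $\pm q_{Ja}$. These columns all lie on the single coordinate line $e_a$, so $\mathcal{M}_{S,a}$ has rank at most one, and $J$ is a non-loop exactly when $q_{Ja}\neq0$. The rank is exactly one because the column $J=C\setminus a$ has entry $\pm q_C\neq 0$.
\item For $J\in E_S$, a nonzero entry forces $i\notin J\supseteq S$, so only rows in $C\setminus S$ appear. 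Moreover $V^S_{i,J}=\pm q_{Ji}=V_{i,J}$ there, since $S\subseteq Ji$ holds automatically.
\end{itemize}
Hence the three groups of columns are supported on the disjoint row sets $C\setminus S$, $\{a\}$ for each $a\in S$, and $\emptyset$. This gives the direct sum decomposition.

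It remains to identify the block on $E_S$ with the restriction of $\mathcal{M}(L(X))$ to $E_S$, i.e.\ to check that the columns of $V$ indexed by $E_S$ are also supported only on rows $C\setminus S$. This holds since $i\in S\subseteq J$ forces $V_{i,J}=0$. Thus the restrictions of $V$ and $V^S$ to $E_S$ coincide as matrices, and so do their column matroids.

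The main obstacle I expect is the careful verification of the formula for $V^S$ in higher codimension, in particular that $q^S_I=q_I$ whenever $S\subseteq I$, including the sign conventions. Everything after that is bookkeeping of supports, as in cocodim $1$.
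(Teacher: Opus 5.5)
Your proposal is correct and takes the paper's route: the paper notes that $X^S$ has Plücker vector $q^S$, so formula \eqref{equation: formula V^S in cocodim 1} still holds, and the block-diagonal argument from the cocodim~$1$ case carries over unchanged. Your write-up spells out the step the paper calls ``readily verified''; there, the reduction of row $a\in S$ to $e_a$ is cleanest as a column operation, subtracting multiples of the unit column $a$ of $W^I$ from the columns in $I\setminus C$.
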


When $X$ is generic, the $\mathcal{M}_{S,a}$ above are of course again free. 
In particular, the matroid $\mathcal{M}(L^S(X))$ is constant on the dense open subset of generic spaces in $\Gr(d+1,n)$. \\

Next, we wish to generalize our description of the Plücker coordinates for $L^S$-spaces from cocodim 1. This will be done in several steps, and requires the use of hypergraphs. Recall that by Theorem \ref{theorem: matroid of L(X)}, the matroid of $L(X)$ is the relabeled Dilworth truncation $\tilde{D}_{n-d}(U_{n,n})$ of the uniform rank $n$ matroid $U_{n,n}$ as long as $X$ is generic. We take complements to undo the relabeling
and regard the elements of $\text{D}_{n-d}(U_{n,n})$ as hyperedges of size $n-d$ on the vertices $1, \ldots, n$.
Some notions for hypergraphs directly transfer from ordinary graphs. For example vertices still have degrees so we can define the \emph{leaves} $l(H)$ of a hypergraph $H$ as before. Additionally we make the following definitions.

\begin{definition}\label{definition: orientations and related notions for hypergraphs}
    Let $H$ be a $(n-d)$-uniform hypergraph with vertices $1, \ldots, n$ and hyperedges $h_1, \ldots, h_k$ of size $n-d$ listed in reverse lexicographical order. Any $h_i$ may be directed by choosing a \emph{target} $t(h_i) \in h_i$. Then the \emph{sources} $s(h_i)$ of $h_i$ are defined to be $h_i \setminus t(h_i)$. An \emph{orientation} $\mathcal{O}$ of $H$ arises by directing all its edges, such that their targets are pairwise disjoint. In this situation, we emphasize $\mathcal{O}$ as subscript in sources and targets of hyperedges.
    Moreover we define the \emph{sources} of $\mathcal{O}$ via $s(\mathcal{O}) = \bigcup_{h \in H}s_\mathcal{O}(h)$ and likewise its \emph{targets} $t(\mathcal{O}) = \bigcup_{h \in H}t_\mathcal{O}(h)$. 
    Then the \emph{roots} of $\mathcal{O}$ are defined by $r(\mathcal{O}) = s(\mathcal{O}) \setminus t(\mathcal{O})$.
    The targets of the $h_i$ with respect to $\mathcal{O}$ give rise to the \emph{target map} 
    $$
    \pi_\mathcal{O}:[k] \to [n] \setminus r(\mathcal{O}); i \mapsto t_\mathcal{O}(h_i) .
    $$
    Finally we define the \emph{number of increasing edges} in $\mathcal{O}$ by counting each hyperedge $h_i$ with \emph{multiplicity} $|s_\mathcal{O}(h_i) \cap [t_\mathcal{O}(h_i)]|$.
\end{definition}

Note that we have a well-defined notion of $\sign \pi_\mathcal{O}$ defined by raising $-1$ to the number of inversions of $\pi_\mathcal{O}$. 
This coincides with the sign of the permutation obtained from $\pi_\mathcal{O}$ by identifying its image with $[k]$ in order preserving fashion.

\begin{lemma}\label{lemma: Plücker coords of L(X) with fixed I_0}
    Fix $I_0 \subseteq [n]$ of size $n-d-1$ and suppose $q_{[n]\setminus I_0} \neq 0$. Let $J_1, \ldots, J_{d+1} \in \binom{[n]}{d}$ be distinct and sorted in lexicographically order. Let $H$ be the hypergraph on $[n]$ with edges $h_i = [n] \setminus J_i$. Then the Plücker coordinate of $L(X)$ determined by the $J_i$ may be written as
    \begin{equation}\label{equation: Plücker coords of L^S(X) with fixed I_0}
        p_{J_1 \ldots J_{d+1}} = \sum_\mathcal{O} \varepsilon_\mathcal{O}  \prod_{i=1}^{d+1} q_{[n] \setminus s_{\mathcal{O}}(h_i)} ,  
    \end{equation}
    where $\mathcal{O}$ runs over all orientations of $H$ with roots $I_0$. For any such orientation, $\mathcal{O}$ we set
    $$ 
    \varepsilon_\mathcal{O} = 
    (-1)^{ \text{ \# increasing edges in $\mathcal{O}$}}
    \sign(\pi_\mathcal{O}). 
    $$
\end{lemma}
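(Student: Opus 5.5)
We compute the Plücker coordinate directly as a maximal minor of the matrix $V$ from Theorem \ref{theorem: L(X) is linear + V & U}. Set $C = [n]\setminus I_0$, so $q_C \neq 0$ and $|C| = d+1$. Identify the rows of $V$ with $C$ as in \eqref{equation: formula for V}. Then $p_{J_1\ldots J_{d+1}} = \det(v_1|\cdots|v_{d+1})$, where $v_i$ is the column of $V$ indexed by $J_i$. By \eqref{equation: formula for V}, the entry of $v_i$ in row $c \in C$ is $(-1)^{|[c]\cap J_i|}q_{J_i c}$ if $c \in h_i$, and $0$ otherwise. Choosing row $c$ for column $i$ amounts to choosing a target $t(h_i) = c \in h_i \cap C$, and the remaining vertices $h_i\setminus c$ are exactly $s(h_i)$. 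In particular $q_{J_i c} = q_{[n]\setminus s(h_i)}$.

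We expand the determinant by the Leibniz formula. The nonzero terms correspond to bijections $i\mapsto t(h_i)$ from $[d+1]$ onto $C$ with $t(h_i)\in h_i$. These are exactly the systems of pairwise distinct targets covering all of $C$, i.e.\ directions of all hyperedges whose targets avoid $I_0$ and exhaust $C$. We must match these with orientations whose root set is exactly $I_0$:
\begin{enumerate}
\item If the targets are exactly $C$, then $t(\mathcal O)=C$, so $r(\mathcal O)=s(\mathcal O)\setminus C\subseteq I_0$.
\item The containment $r(\mathcal O)\subseteq I_0$ may be strict: a vertex of $I_0$ covered by no hyperedge is not a source. Conversely, every orientation with roots $I_0$ has $t(\mathcal O)$ disjoint from $I_0$ and of size $d+1$, hence $t(\mathcal O)=C$.
\end{enumerate}
To reconcile the two descriptions, we first check that vertices of $I_0$ not lying in any hyperedge force a vanishing. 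If $v\in I_0$ lies in no $h_i$, then $v\in J_i$ for all $i$. We then show the minor is zero, for instance via the matroid description, where such a family is dependent. Alternatively, we restrict to generic $X$ (enough by density/polynomial identity) and use Theorem \ref{theorem: matroid of L(X)}: the $J_i$ all containing $v$ would have $|\bigcap J_i|\geq 1 > n-(n-d)+1-(d+1)=0$, so they are dependent, and both sides vanish (the right side is an empty sum). Hence the bijection between Leibniz terms and orientations with roots exactly $I_0$ holds whenever the coordinate is possibly nonzero. Since the formula is a polynomial identity in the $q$'s, it suffices to prove it for generic $X$ and extend by continuity, given that the sum does not depend on genericity.

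The main obstacle is the sign. The Leibniz sign is $\sign$ of the permutation $i\mapsto t(h_i)$, with $C$ identified with $[d+1]$ order preservingly. This is $\sign(\pi_\mathcal O)$ as defined, up to the convention that the $J_i$ are ordered lexicographically while the $h_i$ are ordered reverse lexicographically, and these orders agree under complementation. The entry sign is $(-1)^{|[t]\cap J_i|}$ with $t=t(h_i)$. We write $|[t]\cap J_i| = (t-1) - |[t]\cap s(h_i)|$, since $[t-1]$ splits into $J_i$ and $s(h_i)$ up to $t$ itself. Multiplying over $i$ gives the global factor $(-1)^{\sum_{c\in C}(c-1)}$, which depends only on $I_0$ and can be absorbed into projective rescaling as in Proposition \ref{proposition: Plücker coords of L^S(X) in cocodim 1}, times $(-1)^{\#\text{increasing edges}}$ with the stated multiplicities. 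Finally, the global scaling by powers of $q_C$ (from normalizing $V$ with respect to $C$) cancels projectively. This yields \eqref{equation: Plücker coords of L^S(X) with fixed I_0}.
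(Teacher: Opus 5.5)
Your proposal is correct and follows the paper's proof essentially step by step. Both arguments expand the minor of $V$ (rows indexed by $C=[n]\setminus I_0$) by Leibniz and match the nonzero terms with orientations. Both dispose of the case of an uncovered vertex of $I_0$ (the paper's case $\bigcup_i h_i\neq[n]$) via dependence in the Dilworth truncation for generic $X$, and both use the parity identity $|[t]\cap J_i| \equiv t-1+|s(h_i)\cap[t]|$ modulo $2$.

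The only difference is cosmetic. You keep $I_0$ general and absorb the global sign $(-1)^{\sum_{c\in C}(c-1)}$, whereas the paper assumes $I_0=[n]\setminus[d+1]$ and absorbs $(-1)^{d(d+1)/2}$.
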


\begin{proof}
    We assume without loss of generality that $I_0 = [n] \setminus [d+1]$. This forces all occurring target maps to be permutations of $[d+1]$.
    It suffices to prove the claimed formula on the open dense subset of generic $X$. Then linear matroid of $L(X)$ is the Dilworth truncation $\text{D}_{n-d}(U_{n,n})$.
    If $\bigcup_i h_i \neq [n]$, then for cardinality reasons, there are no orientations $\mathcal{O}$ as required. Hence our formula gives $0$. This is correct because the $h_i$ are easily seen to be dependent in $\text{D}_{n-d}(U_{n,n})$.
    Now assume $\bigcup_i h_i = [n]$.
    Recall that $L(X)$ is the row space of the matrix $V$ determined in equation \eqref{equation: formula for V}. Thus
    $$ 
    V_{i, J_l} = \begin{cases}
        (-1)^{|[i] \cap J_l|}q_{J_l i} & i \in h_l \\
        0 & else.
    \end{cases}
    $$
    Now
    $$
    p_{J_1 \ldots J_{d+1}} =
    \det(V_{-,J_1} | \ldots | V_{-,J_{d+1}}) =
    \sum_{\sigma \in S_{d+1}} \sign \sigma \prod_{i=1}^{d+1} V_{\sigma(i), J_i} .
    $$
    The latter product can be nonzero for a given $\sigma$ only when $\sigma(i) \in h_i$ for all $i$. Directing each edge $h_i$ towards $\sigma(i)$ gives an orientation $\mathcal{O}$ with roots $r(\mathcal{O}) = I_0$, since $\bigcup_i h_i = [n]$. Clearly, $\pi_\mathcal{O} = \sigma$ and any $\sigma$ sending each $i$ into $h_i$ arises this way. Now for any orientation $\mathcal{O}$ as above we have 
    $J_i \cup t_\mathcal{O}(h_i) = [n] \setminus s_\mathcal{O}(h_i)$. Hence
    $$
    V_{\pi_\mathcal{O}(i),J_i} = 
    (-1)^{|[t_\mathcal{O}(h_i)] \setminus h_i|} \cdot q_{[n] \setminus s_\mathcal{O}(h_i)}.
    $$
    For the signs note
    $$
    \prod_{i=1}^{d+1} (-1)^{|[t_\mathcal{O}(h_i)] \setminus h_i |} = 
    \prod_{i=1}^{d+1} (-1)^{t_\mathcal{O}(h_i)-1 + |s_\mathcal{O}(h_i) \cap [t_\mathcal{O}(h_i)] |} =
    (-1)^{\frac{d(d+1)}{2}}\prod_{i=1}^{d+1} (-1)^{|s_\mathcal{O}(h_i) \cap [t_\mathcal{O}(h_i)] |},
    $$
    where in the last equality we used that any $v \in [d+1]$ occurs exactly once as target of an hyperedge. Scaling globally by $(-1)^{\frac{d(d+1)}{2}}$ now gives the desired result.
\end{proof}


\begin{example}
    Let $d=2$ and $n=6$. Assume $q_{123} \neq 0$, then $L(X)$ can be written as row space of the matrix 
    $$
    V = \left(\begin{smallmatrix}
        0 & 0 & 0 & 0 & 0 & q_{123} & q_{124} & q_{125} & q_{126} & q_{134} & q_{135} & q_{136} & q_{145} & q_{146} & q_{156} \\
        0 & -q_{123} & -q_{124} & -q_{125} & -q_{126} & 0 & 0 & 0 & 0 & q_{234} & q_{235} & q_{236} & q_{245} & q_{246} & q_{256} \\
        q_{123} & 0 & -q_{134} & -q_{135} & -q_{136} & 0 & -q_{234} & -q_{235} & -q_{236} & 0 & 0 & 0 & q_{345} & q_{346} & q_{356}
    \end{smallmatrix}\right), 
    $$
    where we ordered the columns lexicographically.
    Let us calculate the Plücker coordinate of $L(X)$ indexed by $J_1=12,J_2=16$ and $J_3=56$. Taking complements yields the hyperedges $h_1=3456, h_2=2345$ and $h_3=1234$. The only way to orient the corresponding hypergraph such that it has roots $I_0 = 456$ is shown in Figure \ref{figure: hypergraph giving monomial}. Here we draw the target of each hyperedge inside an accordingly colored circle. The sources of the hyperedges are given by $456, 345$ and $234$ respectively. 
    Our formula gives the monomial $p_{12,16,56} = \varepsilon \cdot q_{123}q_{126}q_{156}$. To calculate $\varepsilon$ we first compute the target map to be the permutation $(1 ~ 3)$. Counting increasing edges with multiplicity gives $0+0+0 = 0$. Hence $\varepsilon = (-1)^0 \sign \hspace{.3mm} (1 ~ 3) = -1$. Computing $p_{12,16,56}$ directly from $V$ gives the same result up to scaling by $(-1)^{\frac{d(d+1)}{2}} = -1$ as expected.
\end{example}

\begin{figure}[ht]
\begin{center}
    \begin{tikzpicture}[scale = 0.7]
        \node (v1) at (0, 2) {};
        \node (v2) at (1.9, 3) {};
        \node (v3) at (4, 2.5) {};
        \node (v6) at (0.1, 0.3) {};
        \node (v5) at (2, -0.2) {};
        \node (v4) at (3.9, 0.4) {};

        \begin{scope}[fill opacity = 0.7, line width=0.6pt]
            \filldraw [fill = yellow!70] ($(v1) + (-0.5, 0)$)
            to [out = 90, in = 180] ($(v2) + (0, 0.5)$)
            to [out = 0,in = 90] ($(v3) + (1, 0)$)
            to [out = 270,in = 0] ($(v4) + (-0.1, -0.5)$)
            to [out = 180,in = -20] ($(v3) + (-0.8, -0.5)$)
            to [out = 160, in = 0] ($(v2) + (0, -0.8)$)
            to [out = 180, in = 270] ($(v1) + (-0.5, 0)$);
        
            \filldraw [fill = green!70] ($(v5) + (-0.5, 0.5)$)
            to [out = 0, in = 225] ($(v3) + (-0.5, -1.7)$)
            to [out = 45, in = 270] ($(v3) + (-0.6, 0)$)
            to [out = 90, in = 180] ($(v3) + (0, 0.5)$)
            to [out = 0, in = 100] ($(v3) + (0.7, 0)$)
            to [out = 280, in = 90] ($(v4) + (0.9, 0.9)$)
            to [out = 270, in = 90] ($(v4) + (0.95, 0)$)
            to [out = 270, in = 0] ($(v5) + (0, -0.6)$)
            to [out = 180, in = 270] ($(v6) + (-0.5, 0)$)
            to [out = 90, in = 180] ($(v5) + (-0.5, 0.5)$);
            
            \filldraw [fill = red!70] ($(v2) + (-0.3, -0.1)$)
            to [out = 120, in = 180] ($(v2) + (0.2, 0.4)$)
            to [out = 0, in = 170] ($(v3) + (0, 0.4)$)
            to [out = -10, in = 90] ($(v3) + (0.5, -0.1)$)
            to [out = 270, in = 90] ($(v4) + (0.7, 0.3)$)
            to [out = 270, in = 0] ($(v5) + (0.3, -0.45)$)
            to [out = 180, in = 270] ($(v5) + (-0.3, 0)$)
            to [out = 90, in = 180] ($(v5) + (0, 0.4)$)
            to [out = 0, in = 230] ($(v4) + (-0.3, 0.2)$)
            to [out = 50, in = 270] ($(v3) + (-0.2, -1.1)$)
            to [out = 90, in = -10] ($(v3) + (-0.7, -0.2)$)
            to [out = 170, in = 300] ($(v2) + (-0.3, -0.1)$)
            ;
        \end{scope}


        \filldraw [fill = yellow] ($(v1) + (0.1,0.2)$) circle (0.3) node {$1$};
        \filldraw [fill = red!90] ($(v2) + (0.2, -0.05)$) circle (0.3) node {$2$};
        \filldraw [fill = green!80] ($(v3) + (0, 0)$) circle (0.3) node {$3$};
        \node at (v4) [above right] {$4$};
        \node at (v5) [right] {$5$};
        \node at (v6) [below right] {$6$};

        \begin{scope}[every node/.style = {fill, shape = circle, node distance = 30pt}]
            \node (h1) [color = green!80, label = right:$h_1$] at (-3, 2.8) {};
            \node (h2) [below of = h1, color = red!80, label = right:$h_2$] {};
            \node (h3) [below of = h2, color = yellow!80, label = right:$h_3$] {};
        \end{scope}
        
    \end{tikzpicture}
    \caption{A directed hypergraph on $6$ vertices with sources $4,5,6$.}\label{figure: hypergraph giving monomial}
\end{center}
\end{figure}
So far we have just reinterpreted determinants in terms of hypergraphs. To get a more powerful formula we need to allow choosing the roots $I_0$ flexibly. This way one can get expressions with fewer terms as when fixing the same roots for the calculation of all Plücker coordinates. \\ 

To state our next result, we use the shorthand $(-1)^A$ for $\prod_{a \in A}(-1)^a$ when $A \subseteq \ZZ$ is finite.

\begin{lemma}\label{lemma: Plücker coords of L(X) changing I_0}
    Suppose that $I_0$ and $I_0'$ are both subsets of $[n]$ of size $n-d-1$ such that $q_{[n] \setminus I_0}$ and $q_{[n] \setminus I_0'}$ are non zero. We write
    $p_{J_1 \ldots J_{d+1}}$ and $p_{J_1 \ldots J_{d+1}}'$ respectively to 
    distinguish the expressions given by Equation $\eqref{equation: Plücker coords of L^S(X) with fixed I_0}$ when fixing the roots as $I_0$ or $I_0'$. Then we have
    $$ 
    (-1)^{I_0'}q_{[n] \setminus I_0'} \cdot p_{J_1 \ldots J_{d+1}} = 
    (-1)^{I_0}q_{[n] \setminus I_0} \cdot p_{J_1 \ldots J_{d+1}}'.
    $$
\end{lemma}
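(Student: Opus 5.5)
The plan is to identify, for each admissible choice of roots, the expression \eqref{equation: Plücker coords of L^S(X) with fixed I_0} with a determinant of an explicit matrix whose row space is $L(X)$. The lemma then reduces to comparing the scalars by which two such matrices differ.

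Set $C = [n]\setminus I_0$ and $C' = [n]\setminus I_0'$. The proof of Lemma \ref{lemma: Plücker coords of L(X) with fixed I_0} shows that $p_{J_1\ldots J_{d+1}}$ equals $(-1)^{d(d+1)/2}\det(V_{-,J_1}|\ldots|V_{-,J_{d+1}})$. Here $V=V_C$ is the matrix of Equation \eqref{equation: formula for V}, with rows indexed by $C$. Likewise $p'$ equals $(-1)^{d(d+1)/2}$ times the corresponding minor of $V_{C'}$. The global sign is the same in both cases, so it cancels and need not be tracked further.

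\textbf{Step 1.} I would show that $V_{C'} = G\, V_C$ for an explicit invertible matrix $G$. To do so, write $X$ as the row space of $W$ with $W^C = \mathrm{Id}$, and also of $W' = (W^{C'})^{-1}W$, using $q_{C'}\neq 0$. From $V_{i,J}=\det(e_i\,|\,W^J)$, replacing $W$ by $gW$ gives $\det(e_i\,|\,gW^J)$. This equals $\det(g)\cdot\det(g^{-1}e_i\,|\,W^J)$, so $V$ transforms by $\det(g)\,g^{-1}$ acting on rows. Consequently, all $(d+1)$-minors of the row matrices scale by the common factor $\det(\det(g)g^{-1}) = \det(g)^{d}$. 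Since $g = (W^{C'})^{-1}$ and $\det W^{C'} = \pm q_{C'}/q_C$ in the normalization $W^C=\mathrm{Id}$, the two minors differ by a power of $q_{C'}/q_C$ up to sign.

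This appears to give exponent $d$ rather than $1$, so I expect to have to be careful here. The formula \eqref{equation: formula for V} is itself only correct up to the normalization chosen in \cite{JMRS}. The cleaner route is to work directly with \eqref{equation: formula for V} in homogeneous form, $V_{i,J} = \pm q_{Ji}$ for $i\in C$. The matrices $V_C$ and $V_{C'}$ are then both submatrices of the single $n\times\binom{n}{d}$ matrix $\widehat V$ with entries $\widehat V_{i,J}=(-1)^{|[i]\cap J|}q_{Ji}$ for $i\notin J$. They are obtained by selecting the rows in $C$, respectively in $C'$.

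\textbf{Step 2.} I would show that $\widehat V$ has rank $d+1$ and that its rows satisfy linear relations with coefficients given by the $q$'s. Concretely, the row space of $\widehat V$ is $L(X)$, and its row matroid is the matroid of $X$ itself. The row relation is $\sum_{i\in B}\pm q_{B\setminus i}\,\widehat V_{i,-}=0$ for $|B|=d+2$, which is a Plücker relation $R_{A,B}$ applied column by column. Granting this, any two maximal minors of $\widehat V$ on the same columns $J_1,\ldots,J_{d+1}$ but different row sets $C$, $C'$ have ratio $\pm q_{C}/q_{C'}$. This is the standard fact that if a $(d+1)$-dimensional row space is dual to the column space of $W$, the maximal minors in row set $C$ are proportional to $q_{C}$ times a common factor. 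In other words, $\det \widehat V_{C,\mathbf J}\cdot q_{C'} = \pm \det\widehat V_{C',\mathbf J}\cdot q_C$, which is the identity claimed in the lemma.

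\textbf{Step 3.} It remains to pin down the sign as $(-1)^{I_0}(-1)^{I_0'}$. I would reduce to the case where $C$ and $C'$ differ by a single element swap, $C'=C\setminus a\cup b$, and chain such swaps to obtain the general case. For one swap, expand the Plücker relation among the rows of $\widehat V$ to express row $b$ through the rows of $C$. The sign $(-1)^{a+b}$ should arise from moving row $b$ into the position of row $a$, combined with the signs $(-1)^{|[i]\cap J|}$ in \eqref{equation: formula for V}. Tracking this sign bookkeeping is the step I expect to be the main obstacle.
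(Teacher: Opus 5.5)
\textbf{Genuine gap: the sign bookkeeping.} You claim that $p$ and $p'$ equal the same $(-1)^{d(d+1)/2}$ times the minors of $V_C$ and $V_{C'}$, so that this factor cancels. That is false. The factor $(-1)^{d(d+1)/2}$ in the previous lemma's proof holds only in its normalization $I_0=[n]\setminus[d+1]$. That normalization is not harmless once two different root sets are compared.

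For general $C=[n]\setminus I_0$, every orientation with roots $I_0$ has target set exactly $C$, with each element hit once. So the factors $(-1)^{t_{\mathcal O}(h_i)-1}$ multiply to $(-1)^{\sum_{c\in C}(c-1)}$. Hence $p=(-1)^{\sum_{c\in C}(c-1)}\det\widehat V_{C,\mathbf J}$, with the rows taken in increasing order, and this prefactor depends on $C$.

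As a consequence, Step 3 aims at a false identity. The minors satisfy
\[
q_{C'}\det\widehat V_{C,\mathbf J}=q_C\det\widehat V_{C',\mathbf J}
\]
with no sign at all. To see this, take $W$ with row space $X$ and Plücker vector $q$, with columns $w_i$. Then $\widehat V_{i,J}=\det(w_i\,|\,W^J)$, which is linear in $w_i$, so $\widehat V=W^TV$. Therefore $\det\widehat V_{C,\mathbf J}=\det(W^C)\det V_{-,\mathbf J}=q_C\det V_{-,\mathbf J}$. A correct single-swap computation therefore returns $+1$, never $(-1)^{I_0}(-1)^{I_0'}$.

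Concretely, take $n=3$, $d=1$, $J_1=1$, $J_2=2$, $I_0=\{3\}$ and $I_0'=\{2\}$. The minors are $q_{12}^2$ and $q_{12}q_{13}$, related with sign $+1$. The hypergraph expressions, however, are $p=-q_{12}^2$ and $p'=+q_{12}q_{13}$. So the two prefactors are $-1$ and $+1$, not equal.

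\textbf{The repair.} Your Step 2 factorization combined with the corrected prefactors already closes the proof, and Steps 1 and 3 become unnecessary. Since $|C|=|C'|$ and $\sum_{c\in C}c=\binom{n+1}{2}-\sum_{i\in I_0}i$, the ratio of the two prefactors is exactly $(-1)^{I_0}(-1)^{I_0'}$. Substituting into the sign-free minor identity gives the lemma.

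\textbf{Comparison with the paper.} Once fixed, your route genuinely differs from the paper's. The paper uses only that $p$ and $p'$ are proportional. It fixes the scalar for neighbouring root sets by evaluating both expressions on one test hypergraph (edges $Aa_la_{l+1}$) that has a unique orientation for each root set. It then reaches arbitrary root sets by chaining neighbours through basis exchange in the dual matroid of $X$. Your identity $\widehat V=W^TV$ handles all pairs of root sets at once, with neither the test configuration nor the chaining argument.
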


\begin{proof}
    We already know $p_{J_1 \ldots J_{d+1}}$ and $p_{J_1 \ldots J_{d+1}}'$ may differ only by a scalar independent of $J_1, \ldots, J_{d+1}$. 
    To find this scalar it suffices to compare $p_{J_1 \ldots J_{d+1}}$ with $p_{J_1 \ldots J_{d+1}}'$ for indices $J_1, \ldots, J_{d+1}$ where both expressions are nonzero. 
    Here we may assume $X$ to lie in the open dense set where all $q_I$ are nonzero.
    First suppose $I_0$ and $I_0'$ are neighboring i.e. $|I_0 \cap I_0'| = n-d-2$. Let $A$ be their intersection and let $a_1 < \ldots < a_{d+2}$ form the complement of $A$. Then there are $i \neq j \leq d+2$ such that $I_0 = Aa_i$ and $I_0' = Aa_j$. Without loss of generality let us assume $i < j$. For $1 \leq l \leq d+1$ we define the hyperedge
    $h_{d+2-l} = Aa_la_{l+1}$ and take $J_i = [n] \setminus h_i$.
    Note that the $J_i$ are in lexicographical order.
    There is only one way to direct the $h_i$ such they have distinct targets and the resulting orientation $\mathcal{O}(I_0)$ has roots $I_0$. This orientation is determined by the target map
    $$
    \pi_{\mathcal{O}(I_0)} = d+2-l \mapsto \begin{cases}
        a_l & 1 \leq l < i \\
        a_{l+1} & i \leq l \leq d+1.
    \end{cases}
    $$
    Likewise we have a unique orientation for $I_0'$ defined in the same way with $i$ replaced by $j$. 
    
    Up to signs the orientations $\mathcal{O}(I_0)$ and $\mathcal{O}(I_0')$ give the nonzero monomials 
    $$
    q_{[n] \setminus I_0} \cdot \prod_{l =2}^{d+1}q_{[n]\setminus Aa_l} 
    ~~~\text{ and }~~~q_{[n] 
    \setminus I_0'} \cdot \prod_{l =2}^{d+1}q_{[n]\setminus Aa_l},
    $$ 
    respectively.
    It remains to compare the signs.
    Both $\pi_{\mathcal{O}(I_0)}$ and $\pi_{\mathcal{O}(I_0')}$ reverse orders and thus have the same sign.
    As for the numbers of increasing edges note that
    the hyperedge $h_{d+2-l} = Aa_la_{l+1}$ is directed differently in $\mathcal{O}(I_0)$ and $\mathcal{O}(I_0')$ only for $i \leq l < j$. 
    In $\mathcal{O}(I_0)$ it is directed towards $a_{l+1}$, so we count it with multiplicity $|A \cap [a_{l+1}]|+1$. 
    In $\mathcal{O}(I_0')$ it is directed towards $a_{l}$ and hence is counted with multiplicity $|A \cap [a_l]|$. Thus 
    \begin{align*}
        \varepsilon_{\mathcal{O}(I_0)} \varepsilon_{\mathcal{O}(I_0')} 
        =&~ \prod_{l=i}^{j-1} (-1)^{|A \cap [a_{l+1}]|+1} \cdot (-1)^{|A \cap [a_l]|} \\
        =&~ (-1)^{|A \cap [a_i]| + |A \cap [a_j]| + j-i} \\
        =&~ (-1)^{| (A \cup \{a_{i+1}, \ldots, a_j\}) \cap [a_i, a_{j}] |} \\
        =&~ (-1)^{a_j - a_i} \\
        =&~ (-1)^{I_0}(-1)^{I_0'}.
    \end{align*}
    Now the claim follows easily.
    
    If $I_0$ and $I_0'$ are not neighboring, we exploit the fact that $B = \{I \in \binom{[n]}{n-d-1} : q_{[n] \setminus I} \neq 0 \}$ is the set of bases of the matroid dual to the linear matroid of $X$. This guarantees us a sequence $I_0 = K_1, \ldots, K_r = I_0'$ of neighboring sets in $B$ from with the general case is deduced. 
\end{proof}

\begin{theorem}\label{theorem: Plücker coords of L^S(X)}
    Let $X \leq K^n$ be of dimension $d+1$ and $S \subseteq [n]$ as in Theorem \ref{theorem: L^S(X) = L(X^S)}.
    Let $J_1, \ldots, J_{d+1} \in \binom{[n]}{d}$ be distinct and sorted in lexicographic order. Let $H$ be the hypergraph on $[n]$ with hyperedges $h_i = [n] \setminus J_i$. Then the Plücker coordinate $p_{J_1 \ldots J_{d+1}}$ of $L^S(X)$ equals
    %
    \begin{equation}\label{equation: Plücker coords for L^S(X)}
        p_{J_1 \ldots J_{d+1}} = 
        \begin{cases}
            (-1)^{I_0} \sum_{\mathcal{O}} \varepsilon_\mathcal{O}  \prod_{i \neq i_0} q_{[n] \setminus s_{\mathcal{O}}(h_i)} & \text{if $S \subseteq l(H)$} \\
            0 & \text{otherwise}.
        \end{cases}
    \end{equation}
    Here, $h_{i_0} \in H$ and $I_0 \subset h_{i_0}$ of size $n-d-1$ can be chosen freely and the sum runs over all orientations $\mathcal{O}$ of $H$ with roots given by $I_0$. The signs $\varepsilon_\mathcal{O}$ are as in Lemma \ref{lemma: Plücker coords of L(X) with fixed I_0}. 
\end{theorem}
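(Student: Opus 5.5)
The plan is to treat the case $S=\emptyset$ first and then pass to general $S$ through Theorem \ref{theorem: L^S(X) = L(X^S)}, which gives $L^S(X)=L(X^S)$, where $X^S$ has Plücker vector $q^S$, namely $q$ with every $q_I$ satisfying $S\not\subseteq I$ set to zero.

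\textbf{Step 1: $S=\emptyset$.} Fix $i_0$ and $I_0\subset h_{i_0}$ with $|I_0|=n-d-1$. Lemma \ref{lemma: Plücker coords of L(X) with fixed I_0} requires $q_{[n]\setminus I_0}\neq 0$, so I first argue on the dense open set where all $q_I$ are nonzero and extend at the end by continuity, since both sides are polynomial in $q$.

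\begin{itemize}
\item Expand $p_{J_1\ldots J_{d+1}}$ by Lemma \ref{lemma: Plücker coords of L(X) with fixed I_0}, summing over orientations $\mathcal{O}$ with roots $I_0$.
\item In any such orientation, $h_{i_0}\supseteq I_0$ has $n-d$ elements, and its sources avoid $t(\mathcal{O})$. Its sources therefore lie in $I_0$, and since there are $n-d-1$ of them, $s_\mathcal{O}(h_{i_0})=I_0$ exactly.
\item Hence every monomial contains the common factor $q_{[n]\setminus I_0}$.
\item Divide by this factor and multiply by $(-1)^{I_0}$. By Lemma \ref{lemma: Plücker coords of L(X) changing I_0}, the quantity $(-1)^{I_0}p/q_{[n]\setminus I_0}$ does not depend on $I_0$. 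It therefore defines a common projective representative of the Plücker vector.
\end{itemize}

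Two consequences follow. The formula holds for every admissible $I_0$, and the choice of $i_0$ is free because we may take $I_0$ inside any hyperedge. The resulting expression is a polynomial in $q$ with no denominators, so it extends to all $X$. Even if $q_{[n]\setminus I_0}=0$, the reduced expression still equals the Plücker vector of $L(X)$, again by continuity. Here I must check that the normalization is a nonzero vector, which holds because some $I_0$ works generically.

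\textbf{Step 2: general $S$.} Apply Step 1 to $X^S$, replacing each $q$ by $q^S$. A monomial survives only if $S\subseteq [n]\setminus s_\mathcal{O}(h_i)$ for all $i\neq i_0$, that is, no element of $S$ is a source of any hyperedge $h_i$ with $i\neq i_0$.

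\begin{itemize}
\item Suppose $S\subseteq l(H)$. For each $a\in S$, either $a\notin\bigcup h_i$, in which case no orientations exist and both sides vanish, or $a$ lies in exactly one hyperedge. I would like to choose $i_0$ and $I_0$ so that the sum is literally the $q^S$-evaluation. I must verify that this vanishing is consistent, and the cleanest route is to argue via the block structure of Proposition \ref{proposition: matroid of L^S(X)}.
\item Suppose instead that some $a\in S$ lies in two hyperedges $h_i,h_j$. Then $J_i,J_j$ both miss $a$. In the decomposition, such columns lie in $\mathcal{M}_{S,a}$ (rank one) or in $\mathcal{M}_{S,0}$ (loops). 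So the $J$'s are dependent and $p=0$, as claimed.
\end{itemize}

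\textbf{Main obstacle.} The delicate point is the leaf case: matching the general formula with $q^S$ substituted to the stated expression, where the sum still runs over orientations with the original $q$. In particular, orientations in which a leaf $a\in S$ appears as a source must contribute zero automatically, or else be forced to sit in $h_{i_0}$. My first attempt would be to show that a leaf $a\in S$ in hyperedge $h_k$ must be the target of $h_k$ unless $a\in I_0$. Consequently, when $a\in I_0$, I would choose $i_0=k$, so that the offending factor is precisely the one dropped. If that fails, I would fall back on computing via block-diagonal cofactor expansion of $V^S$ along the rows indexed by $a\in S$.
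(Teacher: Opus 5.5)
Your proposal is correct and follows the paper's outline. You settle $S=\emptyset$ with the two lemmas on fixed and on changing $I_0$, after normalizing by $(-1)^{I_0}q_{[n]\setminus I_0}^{-1}$, and then apply this to $L^S(X)=L(X^S)$ with $q^S$ in place of $q$.

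\textbf{Where your route differs.}

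The vanishing case is argued differently. The paper takes $v\in S$ lying in $h$ and $h'$, sets $h_{i_0}=h$ and chooses $I_0\ni v$. Since $v$ is a root it is no target, so it must be a source of $h'$. Hence every term contains the factor $q^S_{[n]\setminus s_\mathcal{O}(h')}=0$. You instead note that $J_i$ and $J_j$ both miss $a$, so both lie in $E_{S,a}\cup E_{S,0}$. That set carries the rank $\le 1$ part $\mathcal{M}_{S,a}\oplus\mathcal{M}_{S,0}$ of Proposition \ref{proposition: matroid of L^S(X)}, so the $J$'s are dependent. This is a clean alternative that avoids the orientation bookkeeping.

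Both proofs need the $S=\emptyset$ formula at the non-generic point $q^S$, with $I_0$ possibly meeting $S$, so that $q^S_{[n]\setminus I_0}=0$. The paper uses this silently; your density argument in Step~1 supplies it.

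You also make explicit why, in the leaf case, the $q^S$-sum equals the $q$-sum. The paper never states this. Your ``first attempt'' already settles it, so no fallback is needed. Suppose a leaf $a\in S\cap h_k$ is a source of $h_k$. Then $a$ is the target of no hyperedge, hence a root, so $a\in I_0\subset h_{i_0}$. Since $a$ lies in no other hyperedge, this \emph{forces} $i_0=k$.

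So $i_0=k$ is automatic rather than a choice you make, and the freedom in $(i_0,I_0)$ claimed by the statement is preserved. It follows that $S\cap s_\mathcal{O}(h_i)=\emptyset$ for all $i\neq i_0$, which gives $q^S_{[n]\setminus s_\mathcal{O}(h_i)}=q_{[n]\setminus s_\mathcal{O}(h_i)}$ term by term.

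\textbf{Two small fixes.}

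First, $s_\mathcal{O}(h_{i_0})=I_0$ does not hold because sources avoid $t(\mathcal{O})$; sources of one hyperedge are typically targets of others. It holds because the elements of $I_0$ are roots, hence not targets, so the target of $h_{i_0}$ must be the unique element of $h_{i_0}\setminus I_0$.

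Second, leaves have degree exactly one. So an $a\in S$ outside $\bigcup_i h_i$ belongs to the ``otherwise'' branch, not the leaf branch. Your remark that no orientations exist in that case still yields the required zero.
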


\begin{proof}
    We again only need to consider $X$ for which all $q_I$ are nonzero. The case $S = \emptyset$ is established by applying the previous two results.
    Indeed, adjust \eqref{equation: Plücker coords of L^S(X) with fixed I_0} by multiplying with $(-1)^{I_0}q_{[n] \setminus I_0}^{-1}$, then Lemma \ref{lemma: Plücker coords of L(X) changing I_0} tells us that this yields expressions for the Plücker coordinates of $L(X)$ independent of the choice of $I_0$. Restricting this choice so that $I_0 \subset h_{i_0}$ for some $i_0$ forces $s_{\mathcal{O}}(h_{i_0}) = I_0$ in any orientation of interest. Thus we obtain equation $\eqref{equation: Plücker coords for L^S(X)}$ when $S = \emptyset$.
    
    For nonempty $S$ we invoke Theorem \ref{theorem: L^S(X) = L(X^S)} and use the established case for $L(X^S) = L^S(X)$. 
    Recall that the Plücker vector of $X^S$ is denoted by $q^S$ and arises from $q$ by replacing $q_I$ with zero whenever $S \not \subseteq I$. 
    If $S$ contains a vertex $v$ of degree greater than 1 covered by say $h$ and $h'$, we can choose $h_{i_0} = h$ and $I_0 \subset h$ containing $v$. This forces $v \in s_{\mathcal{O}}(h')$ for any orientation $\mathcal{O}$ with roots $I_0$. Hence all terms in $p_{J_1 \ldots J_{d+1}}$ will vanish.
\end{proof}
Note that even if $S \subseteq l(H)$, equation \eqref{equation: Plücker coords for L^S(X)} may still give $0$. For generic $X$, Corollary \ref{corollary: characterization of D_{n-d}(U_{n,n})} below tells us this happens only when we can choose $I_0 \subset h_{i_0}$ such that $H$ admits no orientation with roots $I_0$.
As a rule of thumb $I_0$ should always be chosen such that it occurs in as many $h_i$ as possible to keep the number of terms low.

\newsavebox{\hypergraphZero}
\sbox{\hypergraphZero}{
\begin{tikzpicture}[scale = 0.7]
    \node (v1) at (0, 2) {};
    \node (v2) at (1.9, 3) {};
    \node (v3) at (4, 2.5) {};
    \node (v6) at (0.1, 0.3) {};
    \node (v5) at (2, -0.2) {};
    \node (v4) at (3.9, 0.4) {};

    \begin{scope}[fill opacity = 0.7, line width=0.6pt]
        \filldraw [fill = yellow!70] ($(v1) + (-0.35, 0.45)$)
        to [out = 75, in = 180] ($(v2) + (0, 0.6)$)
        to [out = 0,in = 95] ($(v3) + (0.9, -0.4)$)
        to [out = 275,in = 55] ($(v4) + (0.8, 0)$)
        to [out = 235,in = -50] ($(v4) + (-0.15, 0)$)
        to [out = 130,in = -20] ($(v3) + (-0.8, -0.5)$)
        to [out = 160, in = 0] ($(v2) + (0, -0.8)$)
        to [out = 180, in = -10] ($(v1) + (0, -0.2)$)
        to [out = 170, in = 255] ($(v1) + (-0.35, 0.45)$)
        ;
    
        \filldraw [fill = green!70] ($(v1) + (-0.2, 0.5)$)
        to [out = 60, in = 170] ($(v2) + (0.55, 0.4)$)
        to [out = 350, in = 140] ($(v3) + (-0.8, -0.8)$)
        to [out = -40, in = 90] ($(v4) + (0.9, 0.35)$)
        to [out = -90, in = -35] ($(v5) + (-0.2, -0.4)$)
        to [out = 145, in = 290] ($(v5) + (-0.3, 1)$)
        to [out = 110, in = 240] ($(v1) + (-0.2, 0.5)$)
        ;
        
        \filldraw [fill = red!70] ($(v2) + (-0.3, -0.1)$)
        to [out = 110, in = 180] ($(v2) + (0.2, 0.4)$)
        to [out = 0, in = 130] ($(v3) + (0.4, 0.3)$)
        to [out = -50, in = 90] ($(v3) + (0.73, -1)$)
        to [out = 270, in = 75] ($(v4) + (0.75, 0)$)
        to [out = 255, in = 0] ($(v5) + (0.3, -0.45)$)
        to [out = 180, in = 270] ($(v5) + (-0.3, 0)$)
        to [out = 90, in = 180] ($(v5) + (0, 0.4)$)
        to [out = 0, in = 230] ($(v4) + (-0.3, 0.2)$)
        to [out = 50, in = 270] ($(v3) + (-0.2, -1.1)$)
        to [out = 90, in = -10] ($(v3) + (-0.7, -0.2)$)
        to [out = 170, in = 290] ($(v2) + (-0.3, -0.1)$)
        ;
    \end{scope}

    \node at ($(v1) + (0.1,0.2)$) {$1$};
    \node at ($(v2) + (0.2, -0.05)$) {$2$};
    \node at ($(v3) + (0, 0)$) {$3$};
    \node at (v4) [above right] {$4$};
    \node at (v5) [right] {$5$};
    \node at (v6) [below right] {$6$};
    
\end{tikzpicture}
}

\newsavebox{\hypergraphMonomOne}
\sbox{\hypergraphMonomOne}{
\begin{tikzpicture}[scale = 0.7]
    \node (v1) at (0, 2) {};
    \node (v2) at (1.9, 3) {};
    \node (v3) at (4, 2.5) {};
    \node (v6) at (0.1, 0.3) {};
    \node (v5) at (2, -0.2) {};
    \node (v4) at (3.9, 0.4) {};

    \begin{scope}[fill opacity = 0.7, line width=0.6pt]
        \filldraw [fill = yellow!70] ($(v1) + (-0.35, 0.45)$)
        to [out = 75, in = 180] ($(v2) + (0, 0.6)$)
        to [out = 0,in = 95] ($(v3) + (0.9, -0.4)$)
        to [out = 275,in = 55] ($(v4) + (0.8, 0)$)
        to [out = 235,in = -50] ($(v4) + (-0.15, 0)$)
        to [out = 130,in = -20] ($(v3) + (-0.8, -0.5)$)
        to [out = 160, in = 0] ($(v2) + (0, -0.8)$)
        to [out = 180, in = -10] ($(v1) + (0, -0.2)$)
        to [out = 170, in = 255] ($(v1) + (-0.35, 0.45)$)
        ;
    
        \filldraw [fill = green!70] ($(v5) + (-0.5, 0.58)$)
        to [out = 0, in = 225] ($(v3) + (-0.5, -1.7)$)
        to [out = 45, in = 270] ($(v3) + (-0.6, 0)$)
        to [out = 90, in = 180] ($(v3) + (0, 0.5)$)
        to [out = 0, in = 100] ($(v3) + (0.7, 0)$)
        to [out = 280, in = 90] ($(v4) + (1, 0)$)
        to [out = 270, in = 0] ($(v5) + (0, -0.6)$)
        to [out = 180, in = 270] ($(v6) + (-0.2, -0.3)$)
        to [out = 90, in = 180] ($(v5) + (-0.5, 0.58)$);
        
        \filldraw [fill = red!70] ($(v1) + (0.1, 0.65)$)
        to [out = 0, in = 92] ($(v1) + (0.5, 0)$)
        to [out = 272, in = 90] ($(v6) + (0.3, 0.65)$)
        to [out = -90, in = 180] ($(v5) + (0.4, 0.4)$)
        to [out = 0, in = 250] ($(v4) + (-0.1, 0.4)$)
        to [out = 70, in = 180] ($(v4) + (0.35, 0.8)$)
        to [out = 0, in = 90] ($(v4) + (0.75, 0.2)$)
        to [out = 270, in = 3] ($(v5) + (0.4, -0.4)$)
        to [out = 183, in = -10] ($(v6) + (0.6, -0.8)$)
        to [out = 170, in = -50] ($(v6) + (-0.2, -0.4)$)
        to [out = 130, in = 270] ($(v1) + (-0.4, -0.1)$)
        to [out = 90, in = 180] ($(v1) + (0.1, 0.65)$)
        ;
    \end{scope}

    \node at ($(v1) + (0.1,0.2)$) {$1$};
    \node at ($(v2) + (0.2, -0.05)$) {$2$};
    \node at ($(v3) + (0, 0)$) {$3$};
    \node at (v4) [above right] {$4$};
    \node at (v5) [right] {$5$};
    \node at (v6) [below right] {$6$};
    
\end{tikzpicture}
}

\newsavebox{\hypergraphMonomTwo}
\sbox{\hypergraphMonomTwo}{
\begin{tikzpicture}[scale = 0.7]
    \node (v1) at (0, 2) {};
    \node (v2) at (1.9, 3) {};
    \node (v3) at (4, 2.5) {};
    \node (v6) at (0.1, 0.3) {};
    \node (v5) at (2, -0.2) {};
    \node (v4) at (3.9, 0.4) {};

    \begin{scope}[fill opacity = 0.7, line width=0.6pt]
        \filldraw [fill = yellow!70] ($(v1) + (-0.35, 0.45)$)
        to [out = 75, in = 180] ($(v2) + (0, 0.6)$)
        to [out = 0,in = 95] ($(v3) + (0.9, -0.4)$)
        to [out = 275,in = 55] ($(v4) + (0.8, 0)$)
        to [out = 235,in = -50] ($(v4) + (-0.15, 0)$)
        to [out = 130,in = -20] ($(v3) + (-0.8, -0.5)$)
        to [out = 160, in = 0] ($(v2) + (0, -0.8)$)
        to [out = 180, in = -10] ($(v1) + (0, -0.2)$)
        to [out = 170, in = 255] ($(v1) + (-0.35, 0.45)$)
        ;
    
        \filldraw [fill = green!70] ($(v5) + (-0.5, 0.5)$)
        to [out = 0, in = 225] ($(v3) + (-0.5, -1.7)$)
        to [out = 45, in = 270] ($(v3) + (-0.6, 0)$)
        to [out = 90, in = 180] ($(v3) + (0, 0.5)$)
        to [out = 0, in = 100] ($(v3) + (0.7, 0)$)
        to [out = 280, in = 90] ($(v4) + (0.9, 0.9)$)
        to [out = 270, in = 90] ($(v4) + (0.95, 0)$)
        to [out = 270, in = 0] ($(v5) + (0, -0.6)$)
        to [out = 180, in = 270] ($(v6) + (-0.2, -0.3)$)
        to [out = 90, in = 180] ($(v5) + (-0.5, 0.5)$);
        
        \filldraw [fill = red!70] ($(v2) + (-0.3, -0.1)$)
        to [out = 110, in = 180] ($(v2) + (0.2, 0.4)$)
        to [out = 0, in = 130] ($(v3) + (0.4, 0.3)$)
        to [out = -50, in = 90] ($(v3) + (0.73, -1)$)
        to [out = 270, in = 80] ($(v4) + (0.75, 0)$)
        to [out = 260, in = 0] ($(v5) + (0.3, -0.45)$)
        to [out = 180, in = 270] ($(v5) + (-0.3, 0)$)
        to [out = 90, in = 180] ($(v5) + (0, 0.4)$)
        to [out = 0, in = 230] ($(v4) + (-0.3, 0.2)$)
        to [out = 50, in = 270] ($(v3) + (-0.2, -1.1)$)
        to [out = 90, in = -10] ($(v3) + (-0.7, -0.2)$)
        to [out = 170, in = 290] ($(v2) + (-0.3, -0.1)$)
        ;
    \end{scope}

    \node at ($(v1) + (0.1,0.2)$) {$1$};
    \node at ($(v2) + (0.2, -0.05)$) {$2$};
    \node at ($(v3) + (0, 0)$) {$3$};
    \node at (v4) [above right] {$4$};
    \node at (v5) [right] {$5$};
    \node at (v6) [below right] {$6$};
    
\end{tikzpicture}
}

\newsavebox{\hypergraphMonomThree}
\sbox{\hypergraphMonomThree}{
\begin{tikzpicture}[scale = 0.7]
    \node (v1) at (0, 2) {};
    \node (v2) at (1.9, 3) {};
    \node (v3) at (4, 2.5) {};
    \node (v6) at (0.1, 0.3) {};
    \node (v5) at (2, -0.2) {};
    \node (v4) at (3.9, 0.4) {};

    \begin{scope}[fill opacity = 0.7, line width=0.6pt]
        \filldraw [fill = yellow!70] ($(v2) + (0.2, 0.5)$)
        to [out = 0, in = 120] ($(v2) + (1.1, -1.3)$)
        to [out = -60, in = 170] ($(v4) + (0.4, 0.9)$)
        to [out = -10, in = 90] ($(v4) + (1, 0)$)
        to [out = 270, in = 0] ($(v5) + (0, -0.75)$)
        to [out = 180, in = 275] ($(v6) + (-0.63, -0.1)$)
        to [out = 95, in = 240] ($(v2) + (-0.7, -1.3)$)
        to [out = 60, in = 180] ($(v2) + (0.2, 0.5)$)
        ;
    
        \filldraw [fill = green!70] ($(v5) + (-0.5, 0.58)$)
        to [out = 0, in = 225] ($(v3) + (-0.5, -1.7)$)
        to [out = 45, in = 270] ($(v3) + (-0.6, 0)$)
        to [out = 90, in = 180] ($(v3) + (0, 0.5)$)
        to [out = 0, in = 100] ($(v3) + (0.7, 0)$)
        to [out = 280, in = 90] ($(v4) + (0.9, 0)$)
        to [out = 270, in = 0] ($(v5) + (0, -0.6)$)
        to [out = 180, in = 270] ($(v6) + (-0.2, -0.3)$)
        to [out = 90, in = 180] ($(v5) + (-0.5, 0.58)$);
        
        \filldraw [fill = red!70] ($(v1) + (0.1, 0.65)$)
        to [out = 0, in = 92] ($(v1) + (0.5, 0)$)
        to [out = 272, in = 90] ($(v6) + (0.3, 0.65)$)
        to [out = -90, in = 180] ($(v5) + (0.4, 0.4)$)
        to [out = 0, in = 250] ($(v4) + (-0.1, 0.4)$)
        to [out = 70, in = 180] ($(v4) + (0.35, 0.8)$)
        to [out = 0, in = 90] ($(v4) + (0.75, 0.2)$)
        to [out = 270, in = 3] ($(v5) + (0.4, -0.4)$)
        to [out = 183, in = -10] ($(v6) + (0.6, -0.8)$)
        to [out = 170, in = -50] ($(v6) + (-0.2, -0.4)$)
        to [out = 130, in = 270] ($(v1) + (-0.4, -0.1)$)
        to [out = 90, in = 180] ($(v1) + (0.1, 0.65)$)
        ;
    \end{scope}

    \node at ($(v1) + (0.1,0.2)$) {$1$};
    \node at ($(v2) + (0.2, -0.05)$) {$2$};
    \node at ($(v3) + (0, 0)$) {$3$};
    \node at (v4) [above right] {$4$};
    \node at (v5) [right] {$5$};
    \node at (v6) [below right] {$6$};
    
\end{tikzpicture}
}

\newsavebox{\hypergraphBinom}
\sbox{\hypergraphBinom}{
\begin{tikzpicture}[scale = 0.7]
    \node (v1) at (0, 2) {};
    \node (v2) at (1.9, 3) {};
    \node (v3) at (4, 2.5) {};
    \node (v6) at (0.1, 0.3) {};
    \node (v5) at (2, -0.2) {};
    \node (v4) at (3.9, 0.4) {};

    \begin{scope}[fill opacity = 0.7, line width=0.6pt]
        \filldraw [fill = yellow!70] ($(v1) + (-0.35, 0.45)$)
        to [out = 75, in = 180] ($(v2) + (0, 0.6)$)
        to [out = 0,in = 95] ($(v3) + (0.9, -0.4)$)
        to [out = 275,in = 55] ($(v4) + (0.8, 0)$)
        to [out = 235,in = -50] ($(v4) + (-0.15, 0)$)
        to [out = 130,in = -20] ($(v3) + (-0.8, -0.5)$)
        to [out = 160, in = 3] ($(v2) + (0, -0.85)$)
        to [out = 183, in = -10] ($(v1) + (0, -0.2)$)
        to [out = 170, in = 255] ($(v1) + (-0.35, 0.45)$)
        ;
    
        \filldraw [fill = green!70] ($(v5) + (-0.5, 0.5)$)
        to [out = 0, in = 225] ($(v3) + (-0.5, -1.7)$)
        to [out = 45, in = 270] ($(v3) + (-0.6, 0)$)
        to [out = 90, in = 180] ($(v3) + (0, 0.5)$)
        to [out = 0, in = 100] ($(v3) + (0.7, 0)$)
        to [out = 280, in = 90] ($(v4) + (0.9, 0.9)$)
        to [out = 270, in = 57] ($(v4) + (0.73, -0.2)$)
        to [out = 237, in = 0] ($(v5) + (0, -0.6)$)
        to [out = 180, in = 270] ($(v6) + (-0.2, -0.3)$)
        to [out = 90, in = 180] ($(v5) + (-0.5, 0.5)$);
        
        \filldraw [fill = red!70] ($(v2) + (0.1, 0.35)$)
        to [out = 180, in = 45] ($(v1) + (-0.1, 0.6)$)
        to [out = 225, in = 93] ($(v1) + (-0.53, -0.95)$)
        to [out = 273, in = 150] ($(v6) + (0.1, -0.77)$)
        to [out = -30, in = 180] ($(v5) + (0.2, -0.5)$)
        to [out = 0, in = 270] ($(v5) + (0.8, 0)$)
        to [out = 90, in = 0] ($(v5) + (0.25, 0.43)$)
        to [out = 180, in = -70] ($(v6) + (0.5, 0.45)$)
        to [out = 110, in = -118] ($(v1) + (0.6, -0.25)$)
        to [out = 58, in = -100] ($(v2) + (0.7, -0.2)$)
        to [out = 80, in = 0] ($(v2) + (0.1, 0.35)$)
        ;
    \end{scope}

    \node at ($(v1) + (0.1,0.2)$) {$1$};
    \node at ($(v2) + (0.2, -0.05)$) {$2$};
    \node at ($(v3) + (0, 0)$) {$3$};
    \node at (v4) [above right] {$4$};
    \node at (v5) [right] {$5$};
    \node at (v6) [below right] {$6$};
    
\end{tikzpicture}
}

\begin{example}\label{example: plücker coords of L^S(X) for n=6,d=2}
    We again consider the case $d=2$ and $n=6$. In \cite[Example 11]{JMRS} it is shown that $60$ of the $455$ Plücker coordinates of $L(X)$ always vanish due to Plücker relations. Of the remaining $395$ coordinates $15$ are binomials in the $q_I$ when reduced modulo the Plücker relations while $380$ even reduce to monomials. We can use Proposition \ref{theorem: Plücker coords of L^S(X)} to understand this in terms of hypergraphs. The $60$ vanishing Plücker coordinates correspond to hypergraphs which arise from the upper left hypergraph in figure \ref{figure: hypergraphs giving plücker coords} by permuting vertices. Indeed, the edges of any such hypergraph do not cover enough vertices to accommodate an orientation with $3$ roots and distinct targets.
    The upper right hypergraph gives rise to $15$ distinct hypergraphs when permuting vertices. Each of them has two distinct orientations when we choose the roots inside a hyperedge, thus explaining the $15$ Plücker coordinates reducing to binomials. Permuting vertices of course affects the signs in the binomials. Nevertheless, they are of the form 
    $$ \pm q_{bcd}q_{aef} \pm q_{acd}q_{bef} ,$$
    with $\{a,b,c,d,e,f\} = [6]$.
    Lastly, the three hypergraphs in the bottom row of Figure \ref{figure: hypergraphs giving plücker coords} each have a unique orientation with distinct targets and roots $456$, hence they give monomial Plücker coordinates. When permuting vertices, the first two of these hypergraphs have orbits of size 180 each, while the third one only gives rise to $20$ different hypergraphs. 
    These hypergraphs yield monomials of the form 
    $$
    \pm q_{abc}q_{bef}, ~~ \pm q_{abf}q_{aef} ~~\text{and}~~ \pm q_{abc}^2.
    $$
    Now consider $L^S(X)$ for nonempty $S \subset [6]$. Immediately, all binomials must vanish, as they come from hypergraphs without leaves. Increasing the size of $S$ step by step such that $L^S(X)$ remains a linear space, the monomials coming from the hypergraphs in the bottom of figure \ref{figure: hypergraphs giving plücker coords} will start vanishing from left to right where for $|S| = 3$ only a single nonzero coordinate is left. Recall that for $|S| > 3$, $L^S(X)$ will in general not be linear anymore due to Lemma \ref{lemma: need pl.rels if |S| >= d+2}.
\end{example}

\begin{figure}[ht]
\begin{center}
    \begin{tikzpicture}
    
    \node at (0,-3.8)   (HM1) {\usebox{\hypergraphMonomOne}};
    \node at (4.5,-3.8) (HM2) {\usebox{\hypergraphMonomTwo}};
    \node at (9,-3.8)   (HM3) {\usebox{\hypergraphMonomThree}};
    
    \node at (2,0)  (H0) {\usebox{\hypergraphZero}};
    \node at (7,0)	(HB) {\usebox{\hypergraphBinom}};
        
    \end{tikzpicture}
    \caption{The hypergraphs corresponding to Plücker coordinates of $L(X)$ up to symmetry, when $d=2, n=6$.}\label{figure: hypergraphs giving plücker coords}
    
\end{center}
\end{figure}

As corollary to our description of Plücker coordinates we see that bases of $\text{D}_{n-d}(U_{n,n})$ correspond to hypergraphs $H$ which are orientable enough:

\begin{corollary}\label{corollary: characterization of D_{n-d}(U_{n,n})}
    Let $h_1, \ldots, h_{d+1} \in \text{D}_{n-d}(U_{n,n})$ and denote the hypergraph with edges given by the $h_i$ as $H$. Then the following are equivalent.
    \begin{itemize}
        \item[(i)] The $h_i$ form a basis of $\text{D}_{n-d}(U_{n,n})$.
        \item[(ii)] For any $I_0 \subseteq [n]$ of size $n-d-1$ there is an orientation $\mathcal{O}$ of $H$ with roots $r(\mathcal{O}) = I_0$.
        \item[(iii)] For any $i=1, \ldots, d+1$, we can choose $I_0 \subset h_i$ of size $n-d-1$ such that there is an orientation $\mathcal{O}$ of $H$ with roots $r(\mathcal{O}) = I_0$.
        \item[(iv)] For any $i = 1, \ldots, d+1$, we find $d$ distinct elements $t_{i,j} \in h_j \setminus h_i$, where $j \neq i$.
    \end{itemize}
\end{corollary}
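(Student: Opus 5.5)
We prove the cycle (i) $\Rightarrow$ (ii) $\Rightarrow$ (iii) $\Rightarrow$ (iv) $\Rightarrow$ (i). Throughout we use that for generic $X$ the bases of $\mathcal{M}(L(X))$ are exactly the bases of $\text{D}_{n-d}(U_{n,n})$ (Theorem \ref{theorem: matroid of L(X)}, after undoing the relabeling). We also use that for generic $X$ all $q_I$ are nonzero, so Lemma \ref{lemma: Plücker coords of L(X) with fixed I_0} applies with any root set $I_0$.

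For (i) $\Rightarrow$ (ii), fix $I_0$ of size $n-d-1$. Since the $h_i$ form a basis, the Plücker coordinate $p_{J_1\ldots J_{d+1}}$ of $L(X)$ (with $J_i=[n]\setminus h_i$) is nonzero for generic $X$. By Lemma \ref{lemma: Plücker coords of L(X) with fixed I_0}, this coordinate equals $(-1)^{I_0}q_{[n]\setminus I_0}^{-1}$ times a sum over orientations with roots $I_0$, up to the normalization of Lemma \ref{lemma: Plücker coords of L(X) changing I_0}. A nonzero sum must have at least one term, so such an orientation exists. The implication (ii) $\Rightarrow$ (iii) is immediate: pick any $I_0\subset h_i$ of size $n-d-1$, which is possible since $|h_i|=n-d$.

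For (iii) $\Rightarrow$ (iv), fix $i$ and an orientation $\mathcal{O}$ with $r(\mathcal{O})=I_0\subset h_i$. Every element of $[n]\setminus I_0$ is then a target, since the $d+1$ targets are distinct and avoid the $n-d-1$ roots. The edge $h_i$ has target $h_i\setminus I_0$ and sources exactly $I_0$. For $j\neq i$, set $t_{i,j}=t_\mathcal{O}(h_j)$. These are distinct, and $t_{i,j}\notin I_0$ because targets are never roots. Also $t_{i,j}\neq t_\mathcal{O}(h_i)$ by distinctness. Since $h_i=I_0\cup\{t_\mathcal{O}(h_i)\}$, it follows that $t_{i,j}\in h_j\setminus h_i$.

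For (iv) $\Rightarrow$ (i), the cleanest route seems to be the Dilworth rank condition. By Definition \ref{definition: dilworth truncation} in relabeled form, we must show that for every nonempty $T\subseteq[d+1]$ we have $|\bigcup_{j\in T}h_j|\geq |T|+n-d-1$. If $T=\{i\}$, this holds since $|h_i|=n-d$. Otherwise pick $i\in T$. Then $h_i$ together with the distinct elements $t_{i,j}$ for $j\in T\setminus i$ lie in $\bigcup_{j\in T}h_j$, and the $t_{i,j}$ avoid $h_i$. This gives at least $(n-d)+(|T|-1)$ elements, as needed. Since we have $d+1$ elements and $D_{n-d}(U_{n,n})$ has rank $d+1$, independence gives a basis.

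The main obstacle is (i) $\Rightarrow$ (ii): we must rule out cancellation making the sum vanish while being careful not to need the converse direction. Here we only need the trivial direction (a nonzero sum has a term), so it is harmless. Note, however, that (iv) $\Rightarrow$ (i) only needs $d+1$ distinct hyperedges and the independence criterion, which is routine. An alternative route to (ii) $\Rightarrow$ (i) would go through a Hall-type argument, but we avoid it because the cycle closes via (iv).
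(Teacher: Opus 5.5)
Your proposal is correct and follows the paper's route: (i)$\Rightarrow$(ii) because the Plücker coordinate of $L(X)$ is nonzero for generic $X$ and expands as a sum over orientations with any fixed root set; (ii)$\Rightarrow$(iii)$\Rightarrow$(iv) as direct consequences; and (iv)$\Rightarrow$(i) by the Dilworth union bound $|\bigcup_{j\in T}h_j|\geq n-d+|T|-1$, counted from $h_i$ together with the $t_{i,j}$. Your explicit argument for (iii)$\Rightarrow$(iv) notes that $h_i$ must be directed towards the unique element of $h_i\setminus I_0$, so the remaining targets avoid $h_i$; this simply fills in a step the paper calls clear.
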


\begin{proof}
    If the $h_i$ form a basis of $\text{D}_{n-d}(U_{n,n})$, then for any generic $X \leq K^n$ of dimension $d+1$, the Plücker coordinate $p_{J_1 \ldots J_{d+1}}$ of $L(X)$ with $J_i = [n] \setminus h_i$ is nonzero. By Lemma \ref{lemma: Plücker coords of L(X) with fixed I_0}, $H$ must admit orientations for any choice of $n-d-1$ roots. The implications $(ii) \Rightarrow (iii) \Rightarrow (iv)$ are clear. To see $(iv) \Rightarrow (i)$ suppose we are given $h_{i_1}, \ldots, h_{i_s}$. Then $\bigcup_{l = 1}^s h_{i_l}$ contains $h_{i_1}$ as well as 
    $t_{i_1,i_2}, \ldots, t_{i_1,i_s}$. Thus $|\bigcup_{l = 1}^s h_{i_l}| \geq n-d+s-1$.
\end{proof}

A version of this corollary for independent sets is easily obtained by weakening all equalities $r(\mathcal{O}) = I_0$ to inclusions $r(\mathcal{O}) \supseteq I_0$.

\begin{corollary}
    Let $X \leq K^n$ be generic of dimension $d+1$. The bases of $\mathcal{M}(L^S(X))$ correspond to those bases of $\text{D}_{n-d}(U_{n,n})$ yielding hypergraphs with leaves containing $S$. 
\end{corollary}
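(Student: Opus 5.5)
We want to show that for generic $X$ of dimension $d+1$, the bases of $\mathcal{M}(L^S(X))$ are exactly the bases of $\text{D}_{n-d}(U_{n,n})$ whose hypergraphs have all of $S$ among their leaves. Implicitly we need $L^S(X)$ to be linear, i.e.\ $|S|\le d+1$. Since $X$ is generic, all $q_I$ are nonzero, so there is $q_C\neq 0$ with $S\subseteq C$, and Theorem \ref{theorem: L^S(X) = L(X^S)} applies. The bases of $\mathcal{M}(L^S(X))$ are the $(d+1)$-tuples $J_1,\dots,J_{d+1}$ with $p_{J_1\ldots J_{d+1}}\neq 0$ for $L^S(X)$. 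We translate via $h_i=[n]\setminus J_i$ and compare with Theorem \ref{theorem: Plücker coords of L^S(X)}.

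\emph{Necessity of both conditions.} If $S\not\subseteq l(H)$, then Theorem \ref{theorem: Plücker coords of L^S(X)} gives $p=0$. If $S\subseteq l(H)$ but $H$ is not a basis of $\text{D}_{n-d}(U_{n,n})$, then Corollary \ref{corollary: characterization of D_{n-d}(U_{n,n})} (iii) fails. Hence for some $i_0$ and every $I_0\subset h_{i_0}$ of size $n-d-1$ there is no orientation with roots $I_0$, and the sum in \eqref{equation: Plücker coords for L^S(X)} is empty. Alternatively, $p$ vanishes for $L(X^S)$ because it already vanishes for every point of $\Gr(d+1,n)$.

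\emph{Sufficiency.} Let $H$ be a basis with $S\subseteq l(H)$. Choose $i_0$ and $I_0\subset h_{i_0}$ so that $I_0\cap S=\emptyset$. This is where the main care is needed. Each $s\in S$ is a leaf, so it lies in exactly one hyperedge, or in none. Since $H$ is a basis, $\bigcup h_i=[n]$ (as in the proof of Lemma \ref{lemma: Plücker coords of L(X) with fixed I_0}), so each $s$ lies in exactly one hyperedge.

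We then study an orientation $\mathcal{O}$ with roots $I_0$. Every vertex outside $I_0$ is a target exactly once, by counting: there are $d+1$ targets, they are distinct, and they fill the complement of $I_0$. A leaf $s\notin I_0$ lies in a single edge, so it must be that edge's target and never appears as a source. Thus every factor $q_{[n]\setminus s_\mathcal{O}(h_i)}$ with $i\neq i_0$ has index containing $S$, and is nonzero in $q^S$. So in $L(X^S)$ the formula is the same sum as for $L(X)$, with no terms killed.

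It remains to check that this sum is nonzero. For generic $X$ the Plücker coordinate of $L(X)$ indexed by a basis is a nonzero polynomial. The terms agree between $q$ and $q^S$ only when the polynomial is viewed in the variables $q_I$ with $S\subseteq I$, so we still need genericity. Here we restrict to $X$ with $X^S$ generic in the appropriate sense. More cleanly, we invoke Proposition \ref{proposition: matroid of L^S(X)}: $\mathcal{M}(L^S(X))$ is constant on generic $X$, and its $\mathcal{M}_S$ summand is a restriction of $\mathcal{M}(L(X))=\text{D}_{n-d}(U_{n,n})$.

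We therefore check the basis condition on the direct sum decomposition. A basis picks one element from each $\mathcal{M}_{S,a}$, which is an edge containing $a$ as a leaf, namely an edge whose complement misses exactly $a$. It then picks a basis of the restriction of $\text{D}_{n-d}(U_{n,n})$ to edges disjoint from $S$. Showing that this combinatorial description matches "bases of $\text{D}_{n-d}(U_{n,n})$ with $S\subseteq l(H)$" via the orientation criterion (iv) is the main obstacle. We would handle it by removing the leaf edges one at a time and checking that the system of distinct representatives in (iv) restricts correctly.
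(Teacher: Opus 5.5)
\textbf{The gap.} Your necessity argument is fine, and in the sufficiency part you make the key observation: for $S\subseteq l(H)$, every factor $q_{[n]\setminus s_\mathcal{O}(h_i)}$ with $i\neq i_0$ has index containing $S$. But you then assert that nonvanishing ``still needs genericity'' of $X^S$. You switch to a matching argument via Proposition \ref{proposition: matroid of L^S(X)} and leave its key step as ``the main obstacle''. So as written, the sufficiency direction is not proved.

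\textbf{Why your worry is unfounded.} Only variables $q_I$ with $S\subseteq I$ occur in the sum, so its value at $q^S$ is literally the same number as its value at $q$. In fact Theorem \ref{theorem: Plücker coords of L^S(X)} is already stated in terms of $q$. For $S\subseteq l(H)$, the Plücker coordinate of $L^S(X)$ is the very same expression as the Plücker coordinate of $L(X)$ indexed by $J_1,\dots,J_{d+1}$. Since $X$ is generic and $H$ is a basis of $\text{D}_{n-d}(U_{n,n})$, the $J_i$ form a basis of $\tilde{D}_{n-d}(U_{n,n})=\mathcal{M}(L(X))$ (Theorem \ref{theorem: matroid of L(X)}), so this coordinate is nonzero. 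No genericity of $X^S$ is needed, and none is available: $q^S$ has zero entries whenever $S\neq\emptyset$. Drop the proposed restriction to ``$X$ with $X^S$ generic''. Together with your necessity part, this is the short derivation the corollary rests on: the bases of $\mathcal{M}(L^S(X))$ are the bases of $\mathcal{M}(L(X))$ whose hypergraph satisfies $S\subseteq l(H)$.

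\textbf{Smaller points.}
\begin{itemize}
\item Your choice of $I_0$ with $I_0\cap S=\emptyset$ is unjustified, but it is also unnecessary. A leaf $s\in S$ lying in $I_0\subset h_{i_0}$ lies in no other hyperedge, so it is absent from every $s_\mathcal{O}(h_i)$ with $i\neq i_0$. The factor for $i_0$ is excluded from the product anyway.
\item The choice is nonetheless possible: a basis has at most one leaf per hyperedge. Removing $h_i$ leaves $d$ independent hyperedges, whose union must have at least $n-1$ elements, so $h_i$ cannot contain two vertices missing from that union.
\item The same fact completes your alternative route. A basis $H$ with $S\subseteq l(H)$ consists of:
  \begin{itemize}
  \item $|S|$ hyperedges meeting $S$ in exactly one point each, i.e.\ one element of each $\mathcal{M}_{S,a}$;
  \item $d+1-|S|$ hyperedges avoiding $S$. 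These are independent in $\text{D}_{n-d}(U_{n,n})$ and hence form a basis of $\mathcal{M}_S$, which has rank $d+1-|S|$ by the direct sum decomposition.
  \end{itemize}
  The converse is a direct check of the Dilworth union condition.
\end{itemize}
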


\section{Tropicalizations}\label{section: 3 - Tropicalizations}

\subsection{Preliminaries on tropical geometry}\label{section: 3 - Preliminaries on tropical geometry}

There are several approaches to define tropicalizations of (multi-projective) varieties. We review the ones relevant to us, following essentially \cite{MS15}. 

We will assume our ground field $K$ to be algebraically closed together with a non trivial non-Archimedean valuation $\val: K \to \RR \cup \infty$. It is instructive to think of $K = \puiseux$ being the field of Puiseux series in one indeterminate $t$, together with the lowest order term valuation.

In the coming definitions we need the \emph{tropical semiring}, which is the set $\barRR = \RR \cup \infty$ equipped with the operations $\oplus = \min$ and $\odot = +$. 
The ordering inherited form the reals extends in the obvious way and gives rise to the order topology on $\barRR$. One may think of $\barRR^n$ as a tropical analogue of affine space and of $\RR^n$ as the tropical equivalent of a torus. 
We just write $\infty$ again for the additive neutral element of $\barRR^n$.
The tropical analogue of projective space is 
$$
\TT\PP^n = \left( \barRR^{n+1}\setminus \infty \right)/ \hspace{.5ex} \RR ,
$$ 
where $\RR$ acts by tropical scaling, i.e. $\lambda \odot w = (\lambda, \ldots, \lambda)+w$ for $w \in \barRR^{n+1}$ and $\lambda \in \RR$. We call $\TT\PP^n$ the \emph{tropical projective space} and products of such spaces \emph{tropical multi-projective space}. 

\begin{definition}\label{definition: trop f}
    Lef $f = \sum c_ux^u \in K[x_1^{\pm 1}, \ldots, x_n^{\pm 1}]$, where $u$ runs over a finite subset of $\ZZ^n$. The \emph{tropicalization} of $\trop(f)$ of $f$ is obtained by replacing each coefficient by its valuation, and each occurrence of $+$ or $\cdot$ in $f$ with $\oplus$ or $\odot$ respectively. 
    That is, $\trop(f) = \bigoplus \val(c_u) \odot x^{\odot u}$. 
\end{definition}

Note that $\trop(f)$ induces a function $\RR^n \to \RR$.
If $f \in K[x_1, \ldots, x_n]$, that is $f$ has only nonnegative exponents, we obtain a function $\barRR^n \to \barRR$. Here, we follow the conventions that $0 \cdot \infty = 0$ and $u \cdot \infty = \infty$ for $u > 0$. Moreover, $a + \infty = \infty$ for any $a \in \RR$.

\begin{definition}\label{definition: tropical hypersurfaces}
    Let $f \in K[x_1^{\pm 1}, \ldots, x_n^{\pm 1}]$. We define the \emph{tropical hypersurface} of $f$ as the set 
    $$ \trop(\mathcal{V}(f)) = \{ w \in \RR^n : \text{ the minimum in } \trop(f)(w) \text{ is attained at least twice} \} .$$
    %
    For $f \in K[x_1, \ldots, x_n]$ we define its \emph{affine tropical hypersurface}
    $\trop(\mathcal{V}(f))$ as
    $$ \{ w \in \barRR^n : \trop(f)(w) = \infty, \text{ or the minimum in } \trop(f)(w) \text{ is attained at least twice} \} .$$ 
    Finally, if $f$ is multi-homogeneous we can replace $\barRR^n$ in the last definition by tropical multi-projective space to define its \emph{multi-projective tropical hypersurface.}
\end{definition}


\begin{definition}\label{definition: tropicalization of varieties}
    Let $Z$ be a \emph{very affine} variety, i.e.\ a subvariety of the torus $(K^*)^n$ defined by an ideal $I \trianglelefteq K[x_1^{\pm 1}, \ldots, x_n^{\pm 1}]$. Then we define the \emph{tropicalization} of $Z$ as 
    $$ \trop(Z) = \bigcap_{f \in I} \trop(\mathcal{V}(f)).$$
    Likewise, we define the tropicalization of affine or multi-projective varieties using the corresponding version of tropical hypersurfaces, where in the latter case we 
    only run over the multi-homogeneous elements of the ideal.
\end{definition} 
It can be shown that each ideal $I \trianglelefteq K[x_1^{\pm 1}, \ldots, x_n^{\pm 1}]$ (respectively $I \trianglelefteq K[x_1, \ldots, x_n]$) possesses a \emph{tropical basis}, that is a finite generating set $\mathcal{T}$, such that $\trop(Z)$ is the intersection of the tropical hypersurfaces defined by $f \in \mathcal{T}$. Moreover, if $I$ is multi-homogeneous, then the elements of $\mathcal{T}$ can be chosen multi-homogeneous as well.

Since we assume $K$ to be algebraically closed and $\val$ as nontrivial, the various versions of the Fundamental Theorem (see \cite{MS15}) tell us that we can also compute tropicalizations essentially by taking pointwise valuations. Indeed, many authors \emph{define} $\trop(Z)$ as the closure of $\val(Z)$, where the closure is taken in the appropriate ambient space. 
Note that the varieties we are interested in all embed into toric varieties. 
These are naturally stratified by their torus orbits, which themselves can be identified with tori. 
For example, $\mathbb{P}^{n-1}$ is stratified by 
$$
\mathcal{O}_\sigma = 
\{ \mu \in \mathbb{P}^{n-1} : \mu_i = 0 \text{ if and only if } i \in \sigma \} \cong 
(K^*)^{n-|\sigma|}/(K^*) ,
$$ 
where $\sigma \subset [n]$, and $K^* = K \setminus 0$.
%
Such stratifications carry over to the tropical world.
\begin{theorem}\label{theorem: stratification of trop X}
    Assume that $Z$ is an affine, or multi-projective variety. Then stratifying the ambient variety of $Z$ into torus orbits $\mathcal{O}_\sigma$ as above leads to the decomposition
    $$ \trop(Z) = \bigcup_{\sigma} \trop(Z \cap \mathcal{O}_\sigma) \times \infty^\sigma ,$$
    where $Z \cap \mathcal{O}_\sigma$ is viewed as subvariety of the appropriate torus isomorphic to $\mathcal{O}_\sigma$.
\end{theorem}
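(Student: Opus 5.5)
Since $\trop(Z)$ is defined via multi-homogeneous elements of the ideal, the plan is to verify the decomposition one torus orbit at a time. The closed coordinate subspaces are $\overline{\mathcal{O}_\sigma}=\{x_i=0 : i\in\sigma\}$, with ideal $\langle x_i : i\in\sigma\rangle$, and the tropical ambient space splits correspondingly into the strata $\{w : w_i=\infty \iff i\in\sigma\}$. Each such stratum is identified with the tropical torus $\RR^{n-|\sigma|}$, modulo $\RR$ in the projective case. The claim therefore reduces to showing that, for each $\sigma$,
$$\trop(Z)\cap\{w: w_i=\infty \iff i\in\sigma\} \;=\; \trop(Z\cap\mathcal{O}_\sigma)\times\infty^\sigma,$$
where the right-hand side is computed from the ideal $I_\sigma$ of $Z\cap\mathcal{O}_\sigma$ in the Laurent ring of the coordinates outside $\sigma$.

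\textbf{Ideal of the orbit.} Let $\phi_\sigma$ be the map setting $x_i=0$ for $i\in\sigma$. Then $I_\sigma$ is the extension of $\phi_\sigma(I)$ to the Laurent polynomial ring in the remaining variables. The key observation is that for $w$ with $w_i=\infty$ exactly for $i\in\sigma$, every monomial involving some $x_i$ with $i\in\sigma$ takes the value $\infty$ under $\trop(f)(w)$, using the conventions $u\cdot\infty=\infty$ for $u>0$ and $a+\infty=\infty$. Hence $\trop(f)(w)=\trop(\phi_\sigma f)(w')$, where $w'$ is the finite part of $w$. Moreover, the minimum in $\trop(f)(w)$ is attained twice, or is $\infty$, exactly when the same holds for $\phi_\sigma f$ at $w'$.

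\textbf{Inclusion $\supseteq$.} For $w'\in\trop(Z\cap\mathcal{O}_\sigma)$ and $f\in I$, we have $\phi_\sigma f\in I_\sigma$. If $\phi_\sigma f\neq 0$, its tropical hypersurface contains $w'$; if $\phi_\sigma f=0$, then $\trop(f)(w)=\infty$. In both cases $w\in\trop(\mathcal V(f))$. Since this holds for all (multi-homogeneous) $f\in I$, we get $w\in\trop(Z)$.

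\textbf{Inclusion $\subseteq$.} This is the main obstacle. The difficulty is that an element $g\in I_\sigma$ has the form $x^{-u}\phi_\sigma(f)$ only after multiplying by monomials, and the $\infty$ coordinates may obstruct this: $g$ might involve negative powers of variables, or it might be that only $x^m g$ lies in the saturation $(I+\langle x_i : i \in\sigma\rangle)$ restricted to the orbit. To handle this, I would write $x^m g = \phi_\sigma(f)$ for some $f\in I$ and a monomial $x^m$ in the non-$\sigma$ variables. This is possible because localizing at the non-$\sigma$ variables commutes with passing to the quotient by $\langle x_i : i\in\sigma\rangle$, after which the ideal of $\overline{Z}\cap\overline{\mathcal O_\sigma}$ restricted to the orbit is $\phi_\sigma(I)$ localized. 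There is a subtlety here: the scheme-theoretic intersection may differ from the ideal of $Z\cap\mathcal{O}_\sigma$ by nilpotents. I would resolve this by invoking the Fundamental Theorem to pass to radicals, since tropicalizations only depend on the radical (via $f^k\in I$ giving the same tropical hypersurface). Tropicalization is invariant under multiplication by monomials, so $w\in\trop(\mathcal V(f))$ together with the observation above gives $w'\in\trop(\mathcal V(g))$.

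Alternatively, the cleanest route, which I would try first, is to avoid ideals entirely. Use the Fundamental Theorem to write $\trop(Z)=\overline{\val(Z)}$ and $\trop(Z\cap\mathcal{O}_\sigma)=\overline{\val(Z\cap\mathcal{O}_\sigma)}$. Since $Z=\bigsqcup_\sigma Z\cap\mathcal{O}_\sigma$ and $\val$ maps $\mathcal{O}_\sigma$ into the $\sigma$-stratum, it then remains only to check that closures in the partially compactified space intersected with a stratum coincide with the closure within that stratum. That is exactly the ideal computation above, and its main subtle point is the interplay with saturation.
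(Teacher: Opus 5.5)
The paper does not prove this statement itself. It quotes it as a standard fact and points to \cite[Chapter 6]{MS15}. There, the tropicalization of a subvariety of a toric variety is essentially defined orbit by orbit. Its identification with the ideal-theoretic description (or with $\overline{\val(Z)}$) comes from the Fundamental Theorem together with the result that tropicalizing a closure gives the closure of the tropicalization.

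You instead prove the decomposition directly from the paper's own ideal-theoretic definition, and the argument is correct. On the $\sigma$-stratum one has $\trop(f)(w)=\trop(\phi_\sigma f)(w')$. The direction ``$\supseteq$'' only needs $\phi_\sigma(I)\subseteq I(Z\cap\mathcal{O}_\sigma)$. The direction ``$\subseteq$'' only needs $I(Z\cap\mathcal{O}_\sigma)=\sqrt{\phi_\sigma(I)\,K[x_j^{\pm 1} : j\notin\sigma]}$. Your route gives a self-contained, elementary proof matched to exactly the definitions the paper uses. The cited route gives generality (arbitrary toric varieties) and the closure description at the same time.

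A few things to tidy up.

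\textbf{The radical step.} The relation you actually get is $x^m g^k=\phi_\sigma(f)$ with $f\in I$, not $x^m g=\phi_\sigma(f)$. The radical enters through the Nullstellensatz (using that $K$ is algebraically closed). The equality $\trop(\mathcal{V}(g^k))=\trop(\mathcal{V}(g))$ is elementary: $\mathrm{in}_{w'}(g^k)=\mathrm{in}_{w'}(g)^k$ is a monomial if and only if $\mathrm{in}_{w'}(g)$ is. So the Fundamental Theorem is not needed here.

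\textbf{The multi-projective case.} Say explicitly that $f$ may be taken multi-homogeneous. Replace it by its component in the multidegree of $x^m g^k$; this component lies in $I$ and still maps to $x^m g^k$, because $\phi_\sigma$ respects the multigrading. For ``$\supseteq$'', use the degree-zero element $x^{-u}\phi_\sigma(f)$. It lies in the coordinate ring of the quotient torus $\mathcal{O}_\sigma$ and has the same tropical hypersurface as $\phi_\sigma(f)$.

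\textbf{The alternative route.} Your route through $\overline{\val(Z)}$ is not independent. That closure also meets the $\sigma$-stratum in limits of points of $\val(Z\cap\mathcal{O}_\tau)$ with $\tau\subsetneq\sigma$. Showing these lie in $\trop(Z\cap\mathcal{O}_\sigma)$ requires exactly your ideal computation, plus closedness of the extended tropical hypersurfaces. So present the ideal argument as the proof.
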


For details on tropicalizing subvarieties of more general toric varieties, see \cite[chapter 6]{MS15}.

\begin{example}
    Consider $Z = \mathcal{V}(x_1+x_2+x_3) \subseteq \PP^2$ over $K = \puiseux$. The polynomial $x_1+x_2+x_3$ is already a tropical basis, so 
    $$
    \trop(Z) = 
    \{\mu \in \TT\PP^2 : \text{the minimum in } \mu_1 \oplus \mu_2 \oplus \mu_3 \text{ is attained at least twice}\}.
    $$ 
    This is the standard tropical line (see Figure \ref{fig:tropline}) consisting of $3$ rays spanned by the images of unit vectors in $\RR^3$, together with $3$ extra points, each of which corresponds to some $\mu_i$ being infinite. If we want to invoke Theorem \ref{theorem: stratification of trop X} instead, we first write $$Z = \{[a,b,-a-b] \in \PP^2 : (a,b) \in K^2 \setminus (0,0) \}.$$ The part of $Z$ where no coordinate vanishes tropicalizes to
    $$
    \trop(Z \cap (K^*)^3/K^*) = 
    \overline{\{ [\val(a), \val(b), \val(a+b)] : a,b \in K^* \text{ s.t. }a+b \neq 0\}} \subseteq \RR^3/\RR.
    $$ 
    This is readily verified to be the union of the $3$ rays from above. Setting any coordinate to zero forces the valuations of the remaining two coordinates to coincide. Thus we see the $3$ extra points.
\end{example}

 \begin{figure}[ht]
     \begin{center}

    \tikzset{every picture/.style={line width=0.75pt}} 
    \begin{tikzpicture}[x=0.75pt,y=0.75pt,yscale=-1,xscale=1]
    
    \draw [color=blue  ,draw opacity=1 ]   (310,140) -- (310,170) ;
    \draw [color=blue  ,draw opacity=1 ]   (310,170) -- (340,170) ;
    \draw [color=blue  ,draw opacity=1 ]   (285,195) -- (310,170) ;
    \draw (227.95,140) .. controls (227.95,138.87) and (228.87,137.95) .. (230,137.95) .. controls (231.13,137.95) and (232.05,138.87) .. (232.05,140) .. controls (232.05,141.13) and (231.13,142.05) .. (230,142.05) .. controls (228.87,142.05) and (227.95,141.13) .. (227.95,140) -- cycle ;
    \draw (337.95,250) .. controls (337.95,248.87) and (338.87,247.95) .. (340,247.95) .. controls (341.13,247.95) and (342.05,248.87) .. (342.05,250) .. controls (342.05,251.13) and (341.13,252.05) .. (340,252.05) .. controls (338.87,252.05) and (337.95,251.13) .. (337.95,250) -- cycle ;
    \draw (337.95,140) .. controls (337.95,138.87) and (338.87,137.95) .. (340,137.95) .. controls (341.13,137.95) and (342.05,138.87) .. (342.05,140) .. controls (342.05,141.13) and (341.13,142.05) .. (340,142.05) .. controls (338.87,142.05) and (337.95,141.13) .. (337.95,140) -- cycle ;
    \draw    (234.29,140.04) -- (335.71,140.04) ;
    \draw    (233,142.71) -- (337,247.29) ;
    \draw    (340,144.04) -- (340,246.04) ;
    \draw [color=blue  ,draw opacity=1 ][fill=blue  ,fill opacity=1 ] (308.5,140) .. controls (308.5,139.17) and (309.17,138.5) .. (310,138.5) .. controls (310.83,138.5) and (311.5,139.17) .. (311.5,140) .. controls (311.5,140.83) and (310.83,141.5) .. (310,141.5) .. controls (309.17,141.5) and (308.5,140.83) .. (308.5,140) -- cycle ;
    \draw [color=blue  ,draw opacity=1 ][fill=blue  ,fill opacity=1 ] (338.5,170) .. controls (338.5,169.17) and (339.17,168.5) .. (340,168.5) .. controls (340.83,168.5) and (341.5,169.17) .. (341.5,170) .. controls (341.5,170.83) and (340.83,171.5) .. (340,171.5) .. controls (339.17,171.5) and (338.5,170.83) .. (338.5,170) -- cycle ;
    \draw [color=blue  ,draw opacity=1 ][fill=blue  ,fill opacity=1 ] (283.5,195) .. controls (283.5,194.17) and (284.17,193.5) .. (285,193.5) .. controls (285.83,193.5) and (286.5,194.17) .. (286.5,195) .. controls (286.5,195.83) and (285.83,196.5) .. (285,196.5) .. controls (284.17,196.5) and (283.5,195.83) .. (283.5,195) -- cycle ;
    
    \end{tikzpicture}
    \end{center}
     
     \caption{A standard tropical line.}\label{fig:tropline}
 \end{figure}

Note that in the last example $Z$ was a linear space. We explore this case in more detail.
Assume $Z \leq K^n$ is the row space of a full rank matrix $A \in K^{r \times n}$. For $x \in K^n$ let $A_x$ be be matrix obtained from $A$ by appending $x$ as additional row. Now $x \in Z$ if and only if all maximal minors of $A_x$ vanish. In other words $Z$ is defined by the vanishing of the linear forms
\begin{equation}\label{equation: circuits of a linear space}
    c_I = \sum_{j \in I}(-1)^{|[j] \cap I|}q_{I\setminus j}x_j
\end{equation}
where $I \in \binom{[n]}{r+1}$, and the $q_J$ are the Plücker coordinates of $Z$. These linear forms are also called the \emph{circuits} of $Z$. 
They tropicalize to the \emph{valuated circuits}
\begin{equation}\label{equation: tropicalized circuits}
    \trop(c_I) = \bigoplus_{j \in I} \val(q_{I \setminus j}) \odot x_j
\end{equation}
and it turns out that the nonzero circuits $c_I$ give a tropical basis for the prime ideal they generate. Thus 
$$\trop(Z) =  \bigcap_I \trop(c_I).$$
We often identify a linear space $Z$ with its projectivization, in which case we also view its tropicalization in tropical projective space. In either case, its tropicalization is called a \emph{tropicalized linear space}.

This generalizes to the concept of tropical linear spaces.
We call $\mu \in \TT\PP^{\binom{n}{r}-1}$ a \emph{tropical Plücker vector} if it satisfies the \emph{tropical Plücker relations}: for $A \in \binom{[n]}{r-1}, B \in \binom{[n]}{r+1}$ the minimum $$ \min_{i \in B \setminus A}(\mu_{B\setminus i}+\mu_{Ai})$$ is attained at least twice (or equals $\infty$). Note that this is the same as saying $\mu$ is a valuated matroid  \cite{DW92, MR20}.
The \emph{tropical linear space} $\mathcal{L}(\mu)$ corresponding to $\mu$ is the intersection of the tropical hypersurfaces defined by the tropical polynomials \eqref{equation: tropicalized circuits} where $\val(q_{I \setminus j})$ needs to be replaced by $\mu_{I \setminus j}$. 
The tropical linear space $\mathcal{L}(\mu)$ is a pure $d$-dimensional polyhedral complex, and it determines the tropical Plücker vector $\mu$ up to tropical multiplication by a scalar. The collection of all $d$-dimensional tropical linear spaces is parametrized by the Dressian $\text{Dr}(d, n)$, which is the subset of $\TT\PP^{\binom{n}{d}-1}$ consisting of the tropical Plücker vectors \cite{HJJS09, HJS14, SW21, BS22}.
Clearly $\trop(Z) = \mathcal{L}(\val(q))$. Tropical Plücker vectors arising as valuations of classical Plücker vectors are called \emph{realizable}. They are precisely the elements of $\trop(\Gr(d,n)) \subset \text{Dr}(d, n)$. \\

The tropicalization of a vector space $Z \leq K^n$ is particularly nice, if $Z$ has only Plücker coordinates with ``trivial'' valuations, i.e.\ up to global tropical scaling we have $\val(q_I) \in \{0 ,\infty\}$ for all $I \in \binom{r}{n}$. In this case, we just say \emph{$Z$ has trivial valuations}. Indeed, in this case its tropicalization is determined solely by its associated linear matroid $\mathcal{M} = \mathcal{M}(Z)$. If $Z$ is the row space of a matrix $A$, this matroid can be described as the matroid of dependencies among the columns of $A$. 

Recall that a $k$-flat of $\mathcal{M}$ is a maximal subset of rank $k$. The set of all flats $\mathcal{L}(\mathcal{M})$ of $\mathcal{M}$ forms a (geometric) lattice called the \emph{lattice of flats}. For each chain of flats of the form
\begin{equation}\label{equation: chain of flats}
    F_{\bullet} :~~ \emptyset = F_0 \subset F_1 \subset \ldots \subset F_{s} \subset [n],
\end{equation}
in $\mathcal{L}(\mathcal{M})$ we define the convex cone 
\begin{equation}\label{equation: cone for chain of flats}
    C_{F_\bullet} = \cone(e_{F_1}, \ldots, e_{F_s}) + \RR e_{[n]} \subseteq \RR^n/\RR,
\end{equation}
where $e_I = \sum_{i \in I}e_i$ for $I \subseteq [n]$.
Note $F_\bullet^1 \subseteq F_\bullet^2$, if and only if $C_{F_\bullet^1} \subseteq C_{F_\bullet^2}$, so that 
$F_\bullet \mapsto C_{F_\bullet}$ is a bijection. One can show that the collection of cones as in \eqref{equation: cone for chain of flats} forms a fan called the \emph{Bergman fan} of $\mathcal{M}$ (see \cite{AK06} and \cite{MS15}).

\begin{theorem}\label{theorem: very affine trop X is bergman fan}
    Let $Z \leq K^n$ be a linear space with trivial valuations. Then the tropicalization of $Z \cap (K^*)^n/K^*$ equals the Bergman fan of $\mathcal{M}(Z)$, i.e.\ the fan given by the collection of cones as in equation \eqref{equation: cone for chain of flats}.
\end{theorem}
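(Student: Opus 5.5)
The plan is to reduce the statement to a purely combinatorial characterization of the points of $\RR^n/\RR$ lying on all tropicalized circuits. By the discussion preceding the theorem, the nonzero circuits $c_I$ from \eqref{equation: circuits of a linear space} form a tropical basis of the ideal of $Z$. Restricting to the torus part $Z \cap (K^*)^n/K^*$ therefore gives
$$\trop\bigl(Z \cap (K^*)^n/K^*\bigr) = \RR^n/\RR \cap \bigcap_I \trop(\mathcal{V}(c_I)).$$
Since $Z$ has trivial valuations, after a global tropical rescaling every nonzero coefficient $q_{I\setminus j}$ has valuation $0$. Moreover, the support of a nonzero $c_I$ is exactly a circuit $C$ of $\mathcal{M} = \mathcal{M}(Z)$, and every circuit of $\mathcal{M}$ arises this way. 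Hence a point $w \in \RR^n/\RR$ lies in the tropicalization if and only if
\begin{equation*}
\text{for every circuit } C \text{ of } \mathcal{M}, \ \min_{j \in C} w_j \text{ is attained at least twice.} \qquad (\ast)
\end{equation*}
Loops of $\mathcal{M}$ correspond to coordinates vanishing identically on $Z$, so for the torus part to be nonempty we may assume $\mathcal{M}$ is loopless; then $\emptyset$ is a flat, consistent with $F_0 = \emptyset$ in \eqref{equation: chain of flats}.

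The key step is the claim that $w$ satisfies $(\ast)$ if and only if every superlevel set $F_b = \{ j : w_j \geq b \}$, for $b \in \RR$, is a flat of $\mathcal{M}$.
\begin{itemize}
\item[(1)] Suppose some $F_b$ is not a flat. Then there is $j \notin F_b$ lying in the closure of $F_b$, so there is a circuit $C \subseteq F_b \cup \{j\}$ with $j \in C$. On this circuit $w_j < b \leq w_i$ for all $i \in C \setminus j$, so the minimum is attained only once and $(\ast)$ fails.
\item[(2)] Conversely, suppose a circuit $C$ has a unique minimizer $j$, with value $b$. Choose $b'$ with $b < b' \leq \min_{i \in C\setminus j} w_i$. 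Then $C \setminus j \subseteq F_{b'}$ and $j \notin F_{b'}$, but $j$ lies in the closure of $C \setminus j$. Hence $F_{b'}$ is not a flat.
\end{itemize}

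With the claim established, both inclusions follow. For \emph{tropicalization $\subseteq$ Bergman fan}, let $b_1 < \ldots < b_k$ be the distinct values of $w$ and set $F_i = F_{b_{i+1}}$ for $i = 1, \ldots, k-1$. These form a chain $\emptyset \subset F_{k-1} \subset \ldots \subset F_1 \subset [n]$ of flats, and
$$w = b_1 e_{[n]} + \sum_{i=1}^{k-1} (b_{i+1}-b_i)\, e_{F_i},$$
with positive coefficients. Hence $w \in C_{F_\bullet}$. For \emph{Bergman fan $\subseteq$ tropicalization}, any $w = \lambda e_{[n]} + \sum_i \lambda_i e_{F_i}$ with $\lambda_i \geq 0$ and a chain of flats has superlevel sets that are unions of the $F_i$ along an initial segment of the chain. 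Each such set is therefore one of the $F_i$, or $\emptyset$, or $[n]$, hence a flat, and $(\ast)$ holds by the claim.

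I expect the main obstacle to be justifying the first step rigorously: that the circuits remain a tropical basis after restricting to the torus, and that their supports range over exactly the circuits of $\mathcal{M}$ with all coefficient valuations equal. This uses the fundamental theorem and the circuit description recalled before the statement, including the identification of coefficient supports with matroid circuits via the Plücker coordinates $q_{I\setminus j}$. The flat characterization itself is elementary closure bookkeeping.
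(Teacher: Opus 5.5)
Your proposal is correct. The paper gives no proof of its own and recalls this as a known result from \cite{AK06} and \cite{MS15}; your argument is the standard one from those references: use the circuit tropical basis, whose nonzero coefficients all have equal valuation, to reduce to the condition that the minimum over every circuit is attained twice, and then show this is equivalent to every superlevel set $\{j : w_j \geq b\}$ being a flat, which is exactly the Ardila--Klivans characterization of the Bergman fan.
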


Note that in general $\emptyset$ is not a flat of $\mathcal{M}$. If it is not, we simply have no chains of the form \eqref{equation: chain of flats}. This fits well as in this case $\mathcal{M}$ has loops which means $Z$ is contained in a coordinate hyperplane. Then both $Z \cap (K^*)^n/K^*$ and its tropicalization are empty. \\

What about the tropicalization of $Z \cap \mathcal{O}_\sigma$ for other torus orbits of $\PP^{n-1}$? It turns out, we only need to extend our definition of cones. For $\sigma \subset [n]$, the Zarisky closure of $\mathcal{O}_\sigma$ is the vanishing locus of the coordinates indexed by $\sigma$. Now the matroid associated to
$Z_\sigma := Z \cap \overline{\mathcal{O}_\sigma}$ is given by the contraction $\mathcal{M}/\sigma$ of $\mathcal{M}$ by $\sigma$ \cite{Shaw13}. 
To see this, extend a basis $Z_\sigma$ to one of $Z$. Collect the basis elements as rows of a matrix $A$, then the rows spanning $Z_\sigma$ restricted to their coordinates indexed by $\sigma$ are zero. From here the claim easily follows. Moreover if $Z$ has trivial valuations then so does $Z_\sigma$, hence Theorem \ref{theorem: very affine trop X is bergman fan} applies to $Z \cap \mathcal{O}_\sigma = Z_\sigma \cap \mathcal{O}_\sigma$. Note that flats of $\mathcal{M}/\sigma$ correspond to flats of $\mathcal{M}$ containing $\sigma$. In particular $\mathcal{M}/\sigma$ is loopless iff $\sigma$ is a flat of $\mathcal{M}$. Using Theorem \ref{theorem: stratification of trop X} we have
$$
\trop(Z) = \bigcup_{\sigma \in \mathcal{L}(\mathcal{M})} \trop(Z \cap \mathcal{O}_\sigma) \times \infty^\sigma.
$$
See \cite{BEZ21} for an analogous statement in the context of valuated matroids.
The cones of the Bergman fans inside the various strata can all be written in a unified way as
\begin{equation}\label{equation: cones for chaines above flats}
    C_{F_\bullet} = \infty \cdot e_{F_0} + \cone(e_{F_1}, \ldots, e_{F_s}) + \RR e_{[n]} \subseteq (\RR^{[n]\setminus F_0}/\RR) \times \infty^{F_0},
\end{equation}
where $F_{\bullet}$ is chain of flats in $\mathcal{L}(\mathcal{M})$ for which we explicitly drop the assumption $F_0 = \emptyset$. 

\begin{corollary}\label{theorem: proj trop X is extended bergman fan}
    Let $Z \leq K^n$ be a linear space with trivial valuations. Then the tropicalization of (the projectivization of) $Z$ equals the \emph{extended Bergman fan} of $\mathcal{M}(Z)$, i.e. the collection of cones as in equation \eqref{equation: cones for chaines above flats}.
\end{corollary}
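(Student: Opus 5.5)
The plan is to assemble the corollary from the preceding results by stratifying the projectivization $\PP(Z) \subseteq \PP^{n-1}$ into torus orbits. By Theorem \ref{theorem: stratification of trop X},
$$\trop(Z) = \bigcup_{\sigma \subset [n]} \trop(Z \cap \mathcal{O}_\sigma) \times \infty^\sigma ,$$
so it suffices to identify each stratum with the union of the cones \eqref{equation: cones for chaines above flats} whose chain starts at $F_0 = \sigma$.

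Fix $\sigma$ and put $Z_\sigma = Z \cap \overline{\mathcal{O}_\sigma}$, regarded as a linear subspace of $K^{[n]\setminus\sigma}$ after forgetting the coordinates in $\sigma$ (which vanish on it). As recalled above, its matroid is the contraction $\mathcal{M}/\sigma$. I would verify this as sketched there: choose a basis of $Z_\sigma$, extend it to a basis of $Z$, and read dependencies among columns indexed by $[n]\setminus\sigma$.

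Next I will check that $Z_\sigma$ again has trivial valuations. Using the block shape of the matrix just described, each Plücker coordinate of $Z_\sigma$ is, up to a fixed scalar, a Plücker coordinate of $Z$ of the form $q_{B\cup I}$ with $B$ a fixed basis of $\mathcal{M}|_\sigma$. Hence all valuations lie in $\{c,\infty\}$ for a single $c$. This is the step needing some care, because one must ensure the scalar is common to all coordinates. I would handle it by expanding the determinant along the block-triangular structure, which gives $q_{B\cup I} = \det(A_B)\cdot p_I$ with $\det(A_B)\neq 0$ fixed.

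Then Theorem \ref{theorem: very affine trop X is bergman fan}, applied to $Z_\sigma$, identifies $\trop(Z\cap\mathcal{O}_\sigma) = \trop(Z_\sigma \cap (K^*)^{[n]\setminus\sigma}/K^*)$ with the Bergman fan of $\mathcal{M}/\sigma$. This set is empty precisely when $\mathcal{M}/\sigma$ has loops, that is, when $\sigma$ is not a flat. When $\sigma$ is a flat, the flats of $\mathcal{M}/\sigma$ are $F\setminus\sigma$ for flats $F \supseteq \sigma$ of $\mathcal{M}$. Chains $\emptyset \subset F_1\setminus\sigma \subset \cdots$ therefore correspond to chains $\sigma = F_0 \subset F_1 \subset \cdots$ in $\mathcal{L}(\mathcal{M})$, and the vectors $e_{F_i\setminus\sigma}$ agree with $e_{F_i}$ modulo the $\infty^{F_0}$ coordinates. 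So the stratum is exactly the union of the cones \eqref{equation: cones for chaines above flats} with $F_0=\sigma$. Taking the union over all flats $\sigma$ gives the extended Bergman fan.
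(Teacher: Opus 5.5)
Your proposal is correct and follows the paper's own argument. Like the paper, you stratify via Theorem \ref{theorem: stratification of trop X}, identify the matroid of $Z_\sigma = Z\cap\overline{\mathcal{O}_\sigma}$ with the contraction $\mathcal{M}/\sigma$, apply Theorem \ref{theorem: very affine trop X is bergman fan} to $Z_\sigma$, and use that $\mathcal{M}/\sigma$ is loopless exactly when $\sigma$ is a flat, with flats $F\setminus\sigma$ for $F\supseteq\sigma$. Your block-determinant identity $q_{B\cup I}=\det(A_B)\cdot p_I$ holds up to a sign depending on $I$, which is harmless for valuations, and it supplies the justification the paper omits for why $Z_\sigma$ inherits trivial valuations.
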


The name extended Bergman fan is justified by the observation that passing to closures of cones in $\TT\PP^{n-1}$ gives rise to a global (abstract) fan structure. Faces correspond to subchains, while intersections correspond to taking the greatest common subchain.

\subsection{Tropicalizing $L^S$-spaces}\label{section: 3 - trop L^S(X)}

We again fix a linear subspace $X \leq K^n$ of dimension $d+1$, with Plücker vector $q$, and apply our results to the $L^S$-spaces defined earlier. 

\subsubsection{Trivial valuation case} \mbox{ } \\
We first consider the case when $L^S(X)$ has trivial valuations. This is in general not implied by $X$ having trivial valuations. While we know the Plücker coordinates of $L^S(X)$ are polynomials in the Plücker coordinates $q_I$ of $X$, Example \ref{example: plücker coords of L^S(X) for n=6,d=2} shows that they need not be monomial. 
A sufficient criterion is to require that $X$ has \emph{constant coefficients} i.e. $X$ is defined over a subfield of $K$ on which the whole valuation is trivial. 
Indeed, if $X$ has constant coefficients then so does $L^S(X)$. 

\begin{corollary}\label{corollary: trop L^S(X) is extended Bergman fan}
    Suppose $X$ is of dimension $d+1$ and $S \subseteq [n]$ such that there is $q_C \neq 0$ with $S \subseteq C$. Moreover assume that $L^S(X)$ has trivial valuations (or more strongly, $X$ has constant coefficients). Then $\trop( L^S(X))$ is the extended Bergman fan of its associated matroid given by Proposition \ref{proposition: matroid of L^S(X)}.
\end{corollary}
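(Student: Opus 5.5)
The plan is to combine three earlier results: the linearity of $L^S(X)$ from Theorem \ref{theorem: L^S(X) = L(X^S)}, the Bergman fan description in Corollary \ref{theorem: proj trop X is extended Bergman fan}, and the explicit matroid in Proposition \ref{proposition: matroid of L^S(X)}.

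First, the hypothesis gives some $q_C \neq 0$ with $S \subseteq C$. So Theorem \ref{theorem: L^S(X) = L(X^S)} applies: $L^S(X) = L(X^S)$ is the projectivization of a $(d+1)$-dimensional vector subspace $Z \leq K^{\binom{n}{d}}$. Concretely, $Z$ is the row space of $V^S$. Corollary \ref{corollary: L^S(X) is linear} also shows that $L^S(X)$ is cut out scheme-theoretically by linear forms, namely the rows of $U^S$. Its tropicalization, in the sense of Definition \ref{definition: tropicalization of varieties}, is therefore the tropicalization of a linear space, and the circuits give a tropical basis.

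Second, I check the trivial valuations hypothesis in the constant coefficient case. If $X$ is defined over a subfield $k \subseteq K$ on which $\val$ is trivial, choose $W$ with entries in $k$. By Definition \eqref{definition: W^S}, $W^S$ also has entries in $k$. Hence $V^S_{i,J} = \det(e_i | (W^S)^J)$ lies in $k$, and so do all Plücker coordinates of $Z$, which are the maximal minors of $V^S$. Every nonzero Plücker coordinate thus has valuation $0$, and $Z$ has trivial valuations.

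Third, Corollary \ref{theorem: proj trop X is extended Bergman fan} now says that $\trop(L^S(X))$ is the extended Bergman fan of $\mathcal{M}(Z)$. Since $Z$ is the row space of $V^S$, its matroid is $\mathcal{M}(L^S(X))$, which is exactly the matroid described in Proposition \ref{proposition: matroid of L^S(X)}. That completes the argument.

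There is no real obstacle here. The only points needing care are two identifications. One is that the tropicalization of the variety $L^S(X) \subseteq \PP^{\binom{n}{d}-1}$, a priori defined using the Plücker and incidence ideal, agrees with the tropicalization of the linear space $Z$. This holds because both ideals have the same radical: by Corollary \ref{corollary: L^S(X) is linear} the set-theoretic equality holds, and the ideal of a linear space is prime. The other is that matroid loops, such as the ground set of $\mathcal{M}_{S,0}$, are handled automatically: loops lie in every flat, so they are $\infty$ in every cone of \eqref{equation: cones for chaines above flats}.
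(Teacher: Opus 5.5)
Your proposal is correct and matches the paper's reasoning. The paper gives no separate proof: it treats the statement as immediate from the linearity of $L^S(X)$ (Theorem \ref{theorem: L^S(X) = L(X^S)}, Corollary \ref{corollary: L^S(X) is linear}) combined with Corollary \ref{theorem: proj trop X is extended Bergman fan}, noting only that constant coefficients of $X$ pass to $L^S(X)$. You make that last step explicit by choosing $W$ over $k$ and observing that $W^S$ and $V^S$ stay over $k$.
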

    
\begin{example}\label{example: trop L^S(X) for n=4}
    Assume $n=4$, $d=2$. Fix any $3$-dimensional generic space $X \leq K^n$ with trivial valuations. Then $\mathcal{X} = \trop X$ is the extended Bergman fan of $U_{3,4}$. Its intersection with $\mathbb{R}^4/\RR$ is the standard tropical plane (drawn blue in Figures \ref{figure: projected trop lines S=1} and \ref{figure: projected trop lines S=1}).
    We describe $L_{\trop}^S(\mathcal{X}) = \trop(L^S(X))$ for $S = [k]$ with increasing $k \geq 0$. 
    In each case we focus on the largest non empty stratum of $L_{\trop}^S(\mathcal{X})$.

    \mbox{ } \\
    \textbf{Without degeneration i.e. $S = \emptyset$}\label{subsec-withoutdeg}:

    This example was already discussed in \cite{JMRS}. We repeat it here before we study the degenerate versions below. In particular, Figures \ref{fig-DW} and \ref{fig-Bergman1} are taken from \cite{JMRS}. Moreover Figures \ref{figure: projected trop lines S=1} and \ref{figure: projected trop lines S=12} are adaptations of the corresponding illustrations in \cite{JMRS}.
    
    We identify the linear matroid $\mathcal{M}$ of $L(X)$ with $\mathcal{M}(K_4)$.
    %
    The largest nonempty stratum of $L_{\trop}(\mathcal{X})$ lives inside $\RR^{\binom{[4]}{2}}/\RR \cong \RR^6/\RR$ and is given as the Bergman fan of $\mathcal{M}$.
    
    \begin{figure}[ht]
    \begin{center}

    \begin{tikzpicture}[scale = 1.5,
                        edge/.style={black, line width=0.9pt, line cap=round},
                        color = {black}]
    
    
      
    
    


    
      \tikzstyle{vertex}=[draw=black, fill=white, line width=1.4pt]
      \coordinate (trans) at (-1.2, .72);
      
      \coordinate (v1) at (trans);
      \coordinate (v2) at ($(1,0)+(trans)$);
      \coordinate (v3) at ($(1,-1)+(trans)$);
      \coordinate (v4) at ($(0,-1)+(trans)$);
      
      \node at ($(v1)+(-.13,.13)$) {\tiny $1$};
      \node at ($(v2)+(.13,.13)$) {\tiny $2$};
      \node at ($(v3)+(.13,-.13)$) {\tiny $3$};
      \node at ($(v4)+(-.13,-.13)$) {\tiny $4$};
    
      \draw[edge] (v1) -- (v2);
      \draw[edge] (v1) -- (v3);
      \draw[edge] (v1) -- (v4);
      \draw[edge] (v2) -- (v3);
      \draw[edge] (v2) -- (v4);
      \draw[edge] (v3) -- (v4);
    
      \fill (v1) circle (2pt);
      \fill (v2) circle (2pt);
      \fill (v3) circle (2pt);
      \fill (v4) circle (2pt);

    
        \tikzstyle{node}=[text=black, inner sep=3pt, rectangle, rounded corners=3pt,fill=white, draw=none]
        
        \coordinate (H1) at (0.8,.9);
        \coordinate (H2) at (2.1,.9);
        \coordinate (H5) at (3.1,.9);
        \coordinate (H7) at (4,.9);
        \coordinate (H6) at (4.9,.9);
        \coordinate (H3) at (5.9,.9);
        \coordinate (H4) at (7.2,.9);
        \coordinate (v12) at (1.5,-.4);
        \coordinate (v13) at (2.5,-.4);
        \coordinate (v14) at (3.5,-.4);
        \coordinate (v23) at (4.5,-.4);
        \coordinate (v24) at (5.5,-.4);
        \coordinate (v34) at (6.5,-.4);
        \coordinate (b) at (4,-1.2);
        \coordinate (t) at (4,1.8);
        
        \foreach \i/\k in {1/12,1/13,1/14,2/12,2/23,2/24,3/13,3/23,3/34,4/14,4/24,4/34,5/12,5/34,6/13,6/24,7/14,7/23} {
           \draw[edge] (H\i) -- (v\k);
           }
        
        \foreach \i in {1,2,3,4,5,6,7} {
           \draw[edge] (H\i) -- (t);
          }
          
        \foreach \k in {12,13,14,23,24,34} {
           \draw[edge] (v\k) -- (b);
          }
        
        \node[node]  at (b) {\small $\emptyset$};
        
         \node[node] at (H1) {\small $12$ $13$ $14$};
         \node[node] at (H2) {\small $12$ $23$ $24$};
         \node[node] at (H3) {\small $13$ $23$ $34$};
         \node[node] at (H4) {\small $14$ $24$ $34$};
         
         \node[node] at (H5) {\small $12$ $34$};
         \node[node] at (H6) {\small $13$ $24$};
         \node[node] at (H7) {\small $14$ $23$};
          
        \foreach \k in {12,13,14,23,24,34} {
           \node[node] at (v\k) {\small $\k$};
          }
        
        \node[node] at (t) {\small $12$ $13$ $14$ $23$ $24$ $34$};
     
    \end{tikzpicture}

    \caption{The complete graph on $4$ vertices on the left, the lattice of flats of $\mathcal{M}(L(X))$ on the right. }\label{fig-DW}
    
    \end{center}
    \end{figure}
    
    In its fine subdivision, the Bergman fan of this matroid has 13 rays corresponding to the 13 flats.
    For the lattice of flats, see the right part of Figure \ref{fig-DW}.
    In this fan, there is a $2$-dimensional cone spanned by two rays if and only if the two rays correspond to an edge $e$ and a subgraph of $K_4$ strictly containing $e$ whose connected components are cliques.
    The link of this Bergman fan is the Petersen graph, with three additional vertices corresponding to the three disconnected subgraphs of $K_4$ with $2$ edges
    (see Figure \ref{fig-Bergman1}).
    We obtain the coarse subdivision (cf. \cite{AK06}) by dropping those three extra vertices.

    \begin{figure}[ht]
    \begin{center}

    \begin{tikzpicture}[scale = 1.3,
                        edge/.style={black, line width=0.9pt, line cap=round},
                        vertex/.style = {text=black, inner sep=2pt, white, draw=none}
                        ]
    
      \coordinate (p1) at (90:1); 
      \coordinate (p2) at (18:2.3); 
      \coordinate (p3) at (234:2.3); 
      \coordinate (p4) at (162:1); 
    
      \coordinate (p12) at (90:2.3); 
      \coordinate (p13) at (234:1); 
      \coordinate (p14) at (306:1); 
      \coordinate (p23) at (306:2.3); 
      \coordinate (p24) at (18:1); 
      \coordinate (p34) at (162:2.3);

      \draw[edge] (p12) -- (p34) -- (p3) -- (p23) -- (p2) -- cycle; 
      \draw[edge] (p1) -- (p13) -- (p24) -- (p4) -- (p14) -- cycle; 
      \draw[edge] (p1) -- (p12); 
      \draw[edge] (p2) -- (p24); 
      \draw[edge] (p3) -- (p13); 
      \draw[edge] (p4) -- (p34); 
      \draw[edge] (p14) -- (p23); 
    
      \fill (p1) circle [radius=2.2pt];
      \fill (p2) circle [radius=2.2pt];
      \fill (p3) circle [radius=2.2pt];
      \fill (p4) circle [radius=2.2pt];
    
      \fill (p12) circle [radius=2.2pt];
      \fill (p13) circle [radius=2.2pt];
      \fill (p14) circle [radius=2.2pt];
      \fill (p23) circle [radius=2.2pt];
      \fill (p24) circle [radius=2.2pt];
      \fill (p34) circle [radius=2.2pt];
      \fill[gray] ($.5*(p12)+.5*(p34)$) circle [radius=2.2pt];
      \fill[gray] ($.5*(p13)+.5*(p24)$) circle [radius=2.2pt];
      \fill[gray] ($.5*(p14)+.5*(p23)$) circle [radius=2.2pt];
      
      \node[above, fill=white, inner sep=1pt] at ($(p1)+(0,0.1)$) {\tiny $12$ $13$ $14$};
      \node[above right, fill=white, inner sep=1pt] at ($(p2)+(0,0.1)$) {\tiny $12$ $23$ $24$};
      \node[below, fill=white, inner sep=1pt] at ($(p3)-(0,0.1)$) {\tiny $13$ $23$ $34$};
      \node[above left, fill=white, inner sep=1pt] at ($(p4)+(0,0.1)$) {\tiny $14$ $24$ $34$};

      \node[above, fill=white, inner sep=1pt] at ($(p12)+(0,0.1)$){\tiny $12$};
      \node[above left, fill=white, inner sep=1pt] at ($(p13)+(0,0.1)$){\tiny $13$}; 
      \node[above right, fill=white, inner sep=1pt] at ($(p14)+(0,0.1)$){\tiny $14$};
      \node[below right, fill=white, inner sep=1pt] at ($(p23)-(0,0.1)$){\tiny $23$};    
      \node[above right, fill=white, inner sep=1pt] at ($(p24)+(0,0.1)$){\tiny $24$};        
      \node[above left, fill=white, inner sep=1pt] at ($(p34)+(0,0.1)$){\tiny $34$};    
      
      \node[above, fill=white, inner sep=1pt] at ($.5*(p12)+.5*(p34)+(0,0.1)$) {\tiny $12$ $34$};
      \node[above, fill=white, inner sep=1pt] at ($.5*(p13)+.5*(p24)+(0,0.1)$) {\tiny $13$ $24$};
      \node[above, fill=white, inner sep=1pt] at ($.5*(p14)+.5*(p23)+(0,0.1)$) {\tiny $14$ $23$};

    \end{tikzpicture}
    
    \caption{The link of $L_{\trop}(\mathcal{X})$ is the Petersen graph with $3$ extra vertices (gray).}\label{fig-Bergman1}
    \end{center}
    \end{figure}
    
    We refer to \cite[Example 25]{JMRS} for a detailed description of how points in the various cones of $L_{\trop}(\mathcal{X})$ parameterize tropical lines inside $\mathcal X$.

    \mbox{ } \\
    \textbf{Projection with $S=\{1\}$}\label{subsec-S=1}:
    
    By Proposition \ref{proposition: matroid of L^S(X) in cocodim 1}, the linear matroid $\mathcal{M}^1$ of $L^1(X)$ is isomorphic to $\mathcal{M}(K_{\{2,3,4\}}) \oplus U_{1,3}$. 
    We show its lattice of flats in Figure \ref{figure: generic lattice for L^1(X)}. Note that compared to the previous case $23,24$ and $34$ are now parallel.
    The largest non empty stratum of $L_{\trop}^S(\mathcal{X})$ again lives inside $\RR^6/\RR$ and is the Bergman fan of $\mathcal{M}^1$.
    %
\begin{figure}[ht]
\begin{center}
    
    \begin{tikzpicture}[scale = 1.5,
                        edge/.style={black, line width=0.9pt, line cap=round},
                        color = {black}]
      
    
    \tikzstyle{node}=[text=black, inner sep=3pt, rectangle, rounded corners=3pt,fill=white, draw=none]
    
    \coordinate (H123)  at (0, .9);
    \coordinate (H1456) at (1.6, .9);
    \coordinate (H2456) at (3.2, .9);
    \coordinate (H3456) at (4.8, .9);
    
    \coordinate (v1) at (1,-.2);
    \coordinate (v2) at (2,-.2);
    \coordinate (v3) at (3,-.2);
    \coordinate (v456) at (3.9,-.2);
    \coordinate (b) at (2.5,-0.9);
    \coordinate (t) at (2.5,1.6);
    
    \foreach \i/\k in {123/1, 123/2,123/3,1456/1,1456/456,2456/2,2456/456,3456/3,3456/456} {
       \draw[edge] (H\i) -- (v\k);
       }
    
    \foreach \i in {123, 1456,2456,3456} {
       \draw[edge] (H\i) -- (t);
      }
      
    \foreach \k in {1,2,3,456} {
       \draw[edge] (v\k) -- (b);
      }
    
    \node[node]  at (b) {\small $\emptyset$};
    
    \node[node] at (H123) {\small $12~13~14$};
    \node[node] at (H1456) {\small $12~23~24~34$};
    \node[node] at (H2456) {\small $13~23~24~34$};
    \node[node] at (H3456) {\small $14~23~24~34$};

    \node[node] at (v1) {\small $12$};
    \node[node] at (v2) {\small $13$};
    \node[node] at (v3) {\small $14$};
    \node[node] at (v456) {\small $23~24~34$};
    
    \node[node] at (t) {\small $$12~13~14~23~24~34$$};
    
    \end{tikzpicture}
    
    \caption{The lattice of flats $\mathcal{L}(\mathcal{M}(L^1(X)))$ for a generic space $X \leq K^4$.}\label{figure: generic lattice for L^1(X)}

\end{center}
\end{figure}
    %
    %
    The link of this Bergman fan is depicted in Figure \ref{fig-Bergman-S=1}.
    
\begin{figure}[ht]
\centering
        
    \tikzset{every picture/.style={line width=0.75pt}} 
    \begin{tikzpicture}[x=0.75pt,y=0.75pt,yscale=-0.9,xscale=0.9]
    
    \draw  [fill={rgb, 255:red, 0; green, 0; blue, 0 }  ,fill opacity=1 ] (429.78,259.43) .. controls (429.78,257.92) and (428.88,256.7) .. (427.77,256.7) .. controls (426.66,256.7) and (425.76,257.92) .. (425.76,259.43) .. controls (425.76,260.94) and (426.66,262.16) .. (427.77,262.16) .. controls (428.88,262.16) and (429.78,260.94) .. (429.78,259.43) -- cycle ;
    \draw    (190.07,259.43) .. controls (230.64,199.65) and (389.1,198.85) .. (427.77,259.43) ;
    \draw    (190.07,259.43) .. controls (233.81,317.62) and (391,308.85) .. (427.77,259.43) ;
    \draw  [fill={rgb, 255:red, 0; green, 0; blue, 0 }  ,fill opacity=1 ] (259.93,295.15) .. controls (259.93,293.64) and (259.03,292.42) .. (257.91,292.42) .. controls (256.8,292.42) and (255.9,293.64) .. (255.9,295.15) .. controls (255.9,296.66) and (256.8,297.88) .. (257.91,297.88) .. controls (259.03,297.88) and (259.93,296.66) .. (259.93,295.15) -- cycle ;
    \draw  [fill={rgb, 255:red, 0; green, 0; blue, 0 }  ,fill opacity=1 ] (255.85,221.13) .. controls (255.85,219.62) and (254.95,218.4) .. (253.84,218.4) .. controls (252.73,218.4) and (251.83,219.62) .. (251.83,221.13) .. controls (251.83,222.64) and (252.73,223.86) .. (253.84,223.86) .. controls (254.95,223.86) and (255.85,222.64) .. (255.85,221.13) -- cycle ;
    \draw  [fill={rgb, 255:red, 0; green, 0; blue, 0 }  ,fill opacity=1 ] (360.76,220.03) .. controls (360.76,218.52) and (359.86,217.3) .. (358.75,217.3) .. controls (357.64,217.3) and (356.73,218.52) .. (356.73,220.03) .. controls (356.73,221.54) and (357.64,222.76) .. (358.75,222.76) .. controls (359.86,222.76) and (360.76,221.54) .. (360.76,220.03) -- cycle ;
    \draw  [fill={rgb, 255:red, 0; green, 0; blue, 0 }  ,fill opacity=1 ] (194.1,259.43) .. controls (194.1,257.92) and (193.19,256.7) .. (192.08,256.7) .. controls (190.97,256.7) and (190.07,257.92) .. (190.07,259.43) .. controls (190.07,260.94) and (190.97,262.16) .. (192.08,262.16) .. controls (193.19,262.16) and (194.1,260.94) .. (194.1,259.43) -- cycle ;
    \draw  [fill={rgb, 255:red, 0; green, 0; blue, 0 }  ,fill opacity=1 ] (356.82,294.58) .. controls (356.82,293.07) and (355.92,291.85) .. (354.8,291.85) .. controls (353.69,291.85) and (352.79,293.07) .. (352.79,294.58) .. controls (352.79,296.09) and (353.69,297.31) .. (354.8,297.31) .. controls (355.92,297.31) and (356.82,296.09) .. (356.82,294.58) -- cycle ;
    \draw    (192.08,259.43) -- (427.77,259.43) ;
    \draw  [fill={rgb, 255:red, 0; green, 0; blue, 0 }  ,fill opacity=1 ] (272.15,258.14) .. controls (272.15,256.63) and (271.25,255.41) .. (270.14,255.41) .. controls (269.03,255.41) and (268.13,256.63) .. (268.13,258.14) .. controls (268.13,259.65) and (269.03,260.87) .. (270.14,260.87) .. controls (271.25,260.87) and (272.15,259.65) .. (272.15,258.14) -- cycle ;
    \draw  [fill={rgb, 255:red, 0; green, 0; blue, 0 }  ,fill opacity=1 ] (350.03,258.71) .. controls (350.03,257.2) and (349.13,255.98) .. (348.01,255.98) .. controls (346.9,255.98) and (346,257.2) .. (346,258.71) .. controls (346,260.22) and (346.9,261.44) .. (348.01,261.44) .. controls (349.13,261.44) and (350.03,260.22) .. (350.03,258.71) -- cycle ;
    
    \draw (349.34,246.82) node  [font=\footnotesize] [align=left] {\begin{minipage}[lt]{19.56pt}\setlength\topsep{0pt}
    $\displaystyle 13~23~24~34$
    \end{minipage}};
    \draw (350.88,314.08) node  [font=\footnotesize] [align=left] {\begin{minipage}[lt]{19.56pt}\setlength\topsep{0pt}
    $\displaystyle 14~23~24~34$
    \end{minipage}};
    \draw (448.53,265.11) node  [font=\footnotesize] [align=left] {\begin{minipage}[lt]{19.56pt}\setlength\topsep{0pt}
    $\displaystyle 23~24~34$
    \end{minipage}};
    \draw (357.25,206.26) node  [font=\footnotesize] [align=left] {\begin{minipage}[lt]{19.56pt}\setlength\topsep{0pt}
    $\displaystyle 12~23~24~34$
    \end{minipage}};
    \draw (254.27,207.86) node  [font=\footnotesize] [align=left] {\begin{minipage}[lt]{19.56pt}\setlength\topsep{0pt}
    $\displaystyle 12$
    \end{minipage}};
    \draw (263.74,309.82) node  [font=\footnotesize] [align=left] {\begin{minipage}[lt]{19.56pt}\setlength\topsep{0pt}
    $\displaystyle 14$
    \end{minipage}};
    \draw (275.81,247.87) node  [font=\footnotesize] [align=left] {\begin{minipage}[lt]{19.56pt}\setlength\topsep{0pt}
    $\displaystyle 13$
    \end{minipage}};
    \draw (150.04,268.06) node  [font=\footnotesize] [align=left] {\begin{minipage}[lt]{19.56pt}\setlength\topsep{0pt}
    $\displaystyle 12~13~14$
    \end{minipage}};
    
    \end{tikzpicture}
    
    \caption{The link of the Bergman fan of $L^1(X)$.}
    \label{fig-Bergman-S=1}
\end{figure}
    
    We can coarsen the fan structure by dropping the rays which correspond to $2$-valent vertices in Figure \ref{fig-Bergman-S=1}. 
    Then we have two rays which are opposite to each other, and three $2$-dimensional cones. Up to lineality, the fan thus looks like a tropical line in the plane. 
    The three $2$-dimensional cones correspond to the three combinatorial types for tropical (non-degenerate) lines in $\mathbb{R}^3$. 
    Since the projection of our line is required to be contained in the tropical plane, i.e.\ in the $x_1 = \infty$-part of its boundary, one vertex of our tropical line (namely the one not adjacent to the end of direction $e_1$) has to be contained in the span of the ray in direction $e_1$. 
    The two opposite rays $\{12,13,14\}$ resp. $\{23,24,34\}$ correspond to tropical lines for which this vertex is, resp.\ is not contained in the ray of $e_1$. 
    For each of the three $2$-dimensional cones in the coarse subdivision, the refinement into three two-dimensional cones distinguishes the following different relative positions of the tropical line and the plane: 
    the cone closest to $12,13,14$ parametrizes tropical lines for which both vertices are contained in the plane, the next cone parametrizes lines for which one vertex is contained in the plane but the other one (the one not adjacent to the end of direction $e_1$) is not, and the last parametrizes tropical lines for which none of the vertices is contained in the tropical plane. 
    Their projection of course always is, as discussed before. 
    We illustrate a few examples in Figure \ref{figure: projected trop lines S=1}
    
    \newsavebox{\tropLineOne}
    \sbox{\tropLineOne}{
    \begin{tikzpicture}[
    	x  = {(-0.9cm,0.076cm)},
            y  = {(0.0cm,-1cm)},
            z  = {(.6cm,.4cm)},
      edge/.style={line width=.8pt, line cap=round, teal},
      edge2/.style={line width=.8pt, line cap=round, blue},
      edge4/.style={line width=.8pt, line cap=round, blue, dotted},
      face/.style={draw=none, opacity=0.2, fill=blue},
      vertex/.style={draw = red, fill = red, line width=1.5pt},
      edge3/.style={line width=1.3pt, line cap=round, red, dotted},
      scale=1.2
      ]
    
      \coordinate (p0) at (0,0,0); 
      \coordinate (p1) at (-1,1,1); 
      \coordinate (p2) at (1,-1,1); 
      \coordinate (p3) at (1,1,-1); 
      \coordinate (p4) at (-1,-1,-1); 
    
      \coordinate (p12) at ($(p1)+(p2)$);
      \coordinate (p13) at ($(p1)+(p3)$);
      \coordinate (p14) at ($(p1)+(p4)$);
      \coordinate (p23) at ($(p2)+(p3)$);
      \coordinate (p24) at ($(p2)+(p4)$);
      \coordinate (p34) at ($(p3)+(p4)$);
    
      \node[label = right:{\tiny $1$}]  at (p1) {};
      \node[label = above:{\tiny $2$}]  at (p2) {};
      \node[label = left:{\tiny $3$}]  at (p3) {};
      \node[label = above:{\tiny $4$}]  at (p4) {};
    
      \node[label = right:{\tiny $12$}]  at (p12) {};
      \node[label = below:{\tiny $13$}]  at (p13) {};
      \node[label = right:{\tiny $14$}]  at (p14) {};
      \node[label = left:{\tiny $23$}]  at (p23) {};
      \node[label = above:{\tiny $24$}]  at (p24) {};
      \node[label = left:{\tiny $34$}]  at (p34) {};
    
      \draw[face] (p0) -- (p2) -- (p24) -- (p4) -- cycle;
      \draw[edge] (p0) -- (p24); 
      \draw[face] (p0) -- (p1) -- (p14) -- (p4) -- cycle;
      \draw[edge] (p0) -- (p14); 
      \draw[face] (p0) -- (p3) -- (p34) -- (p4) -- cycle;
      \draw[edge] (p0) -- (p34); 
      \draw[edge2] (p0) -- (p4); 
      \draw[edge4] (p0) -- ($-.85*(p1)$);

      \draw[edge3] ($-.7*(p1)$) -- ($(p4)-.7*(p1)$);
      \draw[edge3] ($-.7*(p1)$) -- ($(p0)$); 
      \draw[edge3] ($-.7*(p1)+0.7*(p3)$) -- ($-.7*(p1)$); 
      \draw[edge3] ($-.7*(p1)$) -- ($(p2)-.7*(p1)$);

      \draw[vertex] ($-.7*(p1)$) circle (.6pt);
    
      \draw[face] (p0) -- (p1) -- (p12) -- (p2) -- cycle; 
      \draw[edge] (p0) -- (p12);
      \draw[edge2] (p0) -- (p1);
    
      \draw[face] (p0) -- (p1) -- (p13) -- (p3) -- cycle; 
      \draw[edge] (p0) -- (p13);
    
      \draw[face] (p0) -- (p2) -- (p23) -- (p3) -- cycle; 
      \draw[edge] (p0) -- (p23);
      \draw[edge2] (p0) -- (p2); 
      \draw[edge2] (p0) -- (p3); 
    
      \draw[edge3] ($(p0)$) -- ($(p1)$); 
      
    \end{tikzpicture}}

    \newsavebox{\tropLineTwo}
    \sbox{\tropLineTwo}{
    \begin{tikzpicture}[
    	x  = {(-0.9cm,0.076cm)},
            y  = {(0.0cm,-1cm)},
            z  = {(.6cm,.4cm)},
      edge/.style={line width=.8pt, line cap=round, teal},
      edge2/.style={line width=.8pt, line cap=round, blue},
      edge4/.style={line width=.8pt, line cap=round, blue, dotted},
      face/.style={draw=none, opacity=0.2, fill=blue},
      vertex/.style={draw = red, fill = red, line width=1.5pt},
      edge3/.style={line width=1.3pt, line cap=round, red, dotted},
      scale=1.2
      ]
    
      \coordinate (p0) at (0,0,0); 
      \coordinate (p1) at (-1,1,1); 
      \coordinate (p2) at (1,-1,1); 
      \coordinate (p3) at (1,1,-1); 
      \coordinate (p4) at (-1,-1,-1); 
    
      \coordinate (p12) at ($(p1)+(p2)$);
      \coordinate (p13) at ($(p1)+(p3)$);
      \coordinate (p14) at ($(p1)+(p4)$);
      \coordinate (p23) at ($(p2)+(p3)$);
      \coordinate (p24) at ($(p2)+(p4)$);
      \coordinate (p34) at ($(p3)+(p4)$);
    
      \node[label = right:{\tiny $1$}]  at (p1) {};
      \node[label = above:{\tiny $2$}]  at (p2) {};
      \node[label = left:{\tiny $3$}]  at (p3) {};
      \node[label = above:{\tiny $4$}]  at (p4) {};
    
      \node[label = right:{\tiny $12$}]  at (p12) {};
      \node[label = below:{\tiny $13$}]  at (p13) {};
      \node[label = right:{\tiny $14$}]  at (p14) {};
      \node[label = left:{\tiny $23$}]  at (p23) {};
      \node[label = above:{\tiny $24$}]  at (p24) {};
      \node[label = left:{\tiny $34$}]  at (p34) {};
    
      \draw[face] (p0) -- (p2) -- (p24) -- (p4) -- cycle;
      \draw[edge] (p0) -- (p24); 
      \draw[face] (p0) -- (p1) -- (p14) -- (p4) -- cycle;
      \draw[edge] (p0) -- (p14); 
      \draw[face] (p0) -- (p3) -- (p34) -- (p4) -- cycle;
      \draw[edge] (p0) -- (p34); 
      \draw[edge2] (p0) -- (p4); 
      \draw[edge4] (p0) -- ($-.85*(p1)$);

      \draw[edge3] ($-.7*(p1)$) -- ($(p4)-.7*(p1)$);
      \draw[edge3] ($-.4*(p1)+.3*(p3)$) -- ($0.3*(p3)$); 
      \draw[edge3] ($-.4*(p1)+(p3)$) -- ($0.3*(p3)-.4*(p1)$); 
      \draw[edge3] ($-.7*(p1)$) -- ($(p2)-.7*(p1)$);
      \draw[edge3] ($-.4*(p1)+0.3*(p3)$) -- ($-.7*(p1)$);
      \draw[vertex] ($-.4*(p1)+0.3*(p3)$) circle (.6pt);
      \draw[vertex] ($-.7*(p1)$) circle (.6pt);
    
      \draw[face] (p0) -- (p1) -- (p12) -- (p2) -- cycle; 
      \draw[edge] (p0) -- (p12);
      \draw[edge2] (p0) -- (p1);
    
      \draw[face] (p0) -- (p1) -- (p13) -- (p3) -- cycle; 
      \draw[edge] (p0) -- (p13);
    
      \draw[face] (p0) -- (p2) -- (p23) -- (p3) -- cycle; 
      \draw[edge] (p0) -- (p23);
      \draw[edge2] (p0) -- (p2); 
      \draw[edge2] (p0) -- (p3); 
    
      \draw[edge3] ($.3*(p3)$) -- ($(p1)+0.3*(p3)$); 

    \end{tikzpicture}}
    
\begin{figure}[ht]
\begin{center}
    \begin{tikzpicture}
        \node at (0,0)   (A) {\usebox{\tropLineOne}};
        \node at (6.7,0)   (B) {\usebox{\tropLineTwo}};
    \end{tikzpicture}
    
    \caption{The combinatorial type of the tropical lines parametrized by the ray $23~24~34$ on the left, and by the $2$-dimensional cone spanned by $23~24~34$ and $13~23~24~34$ on the right in the moduli space $L^1_{\trop}(\mathcal{X})$ in its fine subdivision (see Figure \ref{fig-Bergman-S=1}).}\label{figure: projected trop lines S=1}
    
\end{center}
\end{figure}

    \mbox{ } \\
    \textbf{Projection with $S=\{1,2\}$}\label{subsec-S=12}:
    
    The linear matroid $\mathcal{M}^{12}$ of $L^{12}(X)$ is isomorphic to $\mathcal{M}(K_{\{3,4\}}) \oplus U_{1,2}^2 \oplus U_{0,1}$. Compared to the previous case, $34$ becomes a loop and $13$ is now parallel to $14$. 
    %
    %
    The lattice of flats of $\mathcal{M}^{12}$ is depicted in Figure \ref{figure: generic lattice for L^12(X)}.
    In the language of Section \ref{section: 3 - Preliminaries on tropical geometry}, the largest non empty stratum is the union of cones $C_{F_\bullet}$ as in equation \eqref{equation: cones for chaines above flats} with fixed $F_0 = \{34\}$. 
    After projecting away the infinite coordinate, this is the same as the Bergman fan of $\mathcal{M}^{12}/\small{34}$.
    The link of this fan is depicted in Figure \ref{fig-Bergman-S=12}, where we dropped the loop $34$.

\begin{figure}[ht]
\begin{center}
    
    \begin{tikzpicture}[scale = 1.5,
                        edge/.style={black, line width=0.9pt, line cap=round},
                        color = {black}]
      
    
    \tikzstyle{node}=[text=black, inner sep=3pt, rectangle, rounded corners=3pt,fill=white, draw=none]
    
    \coordinate (H1236)  at (0, .9);
    \coordinate (H1456) at (1.6, .9);
    \coordinate (H23456) at (3.2, .9);
    
    \coordinate (v16) at (0.2,-.2);
    \coordinate (v236) at (1.6,-.2);
    \coordinate (v456) at (3,-.2);
    \coordinate (b) at (1.6,-0.9);
    \coordinate (t) at (1.6,1.6);
    
    \foreach \i/\k in {1236/16, 1236/236,1456/16,1456/456,23456/236,23456/456} {
       \draw[edge] (H\i) -- (v\k);
       }
    
    \foreach \i in {1236, 1456,23456} {
       \draw[edge] (H\i) -- (t);
      }
      
    \foreach \k in {16,236,456} {
       \draw[edge] (v\k) -- (b);
      }
    
    \node[node]  at (b) {\small $\emptyset$};
    
    \node[node] at (H1236) {\small $12~13~14$};
    \node[node] at (H1456) {\small $12~23~24~34$};
    \node[node] at (H23456) {\small $13~23~24~34$};

    \node[node] at (v16) {\small $12$};
    \node[node] at (v236) {\small $13$};
    \node[node] at (v456) {\small $23~24~34$};
    
    \node[node] at (t) {\small $$12~13~14~23~24~34$$};

    \end{tikzpicture}
    
    \caption{The lattice of flats $\mathcal{L}(\mathcal{M}(L^{12}(X)))$ for a generic space $X \leq K^4$.}\label{figure: generic lattice for L^12(X)}

\end{center}
\end{figure}
    
\begin{figure}[ht]
    \centering
    
    \tikzset{every picture/.style={line width=0.75pt}} 
    
    \begin{tikzpicture}[x=0.75pt,y=0.75pt,yscale=-0.9,xscale=0.9]
    
    \draw  [fill={rgb, 255:red, 0; green, 0; blue, 0 }  ,fill opacity=1 ] (429.78,259.43) .. controls (429.78,257.92) and (428.88,256.7) .. (427.77,256.7) .. controls (426.66,256.7) and (425.76,257.92) .. (425.76,259.43) .. controls (425.76,260.94) and (426.66,262.16) .. (427.77,262.16) .. controls (428.88,262.16) and (429.78,260.94) .. (429.78,259.43) -- cycle ;
    \draw    (190.07,259.43) .. controls (230.64,199.65) and (389.1,198.85) .. (427.77,259.43) ;
    \draw    (190.07,259.43) .. controls (233.81,317.62) and (391,308.85) .. (427.77,259.43) ;
    \draw  [fill={rgb, 255:red, 0; green, 0; blue, 0 }  ,fill opacity=1 ] (259.93,295.15) .. controls (259.93,293.64) and (259.03,292.42) .. (257.91,292.42) .. controls (256.8,292.42) and (255.9,293.64) .. (255.9,295.15) .. controls (255.9,296.66) and (256.8,297.88) .. (257.91,297.88) .. controls (259.03,297.88) and (259.93,296.66) .. (259.93,295.15) -- cycle ;
    \draw  [fill={rgb, 255:red, 0; green, 0; blue, 0 }  ,fill opacity=1 ] (255.85,221.13) .. controls (255.85,219.62) and (254.95,218.4) .. (253.84,218.4) .. controls (252.73,218.4) and (251.83,219.62) .. (251.83,221.13) .. controls (251.83,222.64) and (252.73,223.86) .. (253.84,223.86) .. controls (254.95,223.86) and (255.85,222.64) .. (255.85,221.13) -- cycle ;
    \draw  [fill={rgb, 255:red, 0; green, 0; blue, 0 }  ,fill opacity=1 ] (360.76,220.03) .. controls (360.76,218.52) and (359.86,217.3) .. (358.75,217.3) .. controls (357.64,217.3) and (356.73,218.52) .. (356.73,220.03) .. controls (356.73,221.54) and (357.64,222.76) .. (358.75,222.76) .. controls (359.86,222.76) and (360.76,221.54) .. (360.76,220.03) -- cycle ;
    \draw  [fill={rgb, 255:red, 0; green, 0; blue, 0 }  ,fill opacity=1 ] (194.1,259.43) .. controls (194.1,257.92) and (193.19,256.7) .. (192.08,256.7) .. controls (190.97,256.7) and (190.07,257.92) .. (190.07,259.43) .. controls (190.07,260.94) and (190.97,262.16) .. (192.08,262.16) .. controls (193.19,262.16) and (194.1,260.94) .. (194.1,259.43) -- cycle ;
    \draw  [fill={rgb, 255:red, 0; green, 0; blue, 0 }  ,fill opacity=1 ] (356.82,294.58) .. controls (356.82,293.07) and (355.92,291.85) .. (354.8,291.85) .. controls (353.69,291.85) and (352.79,293.07) .. (352.79,294.58) .. controls (352.79,296.09) and (353.69,297.31) .. (354.8,297.31) .. controls (355.92,297.31) and (356.82,296.09) .. (356.82,294.58) -- cycle ;
    
    \draw (350.88,314.08) node  [font=\footnotesize] [align=left] {\begin{minipage}[lt]{19.56pt}\setlength\topsep{0pt}
    $\displaystyle 13~14~23~24$
    \end{minipage}};
    \draw (448.53,265.11) node  [font=\footnotesize] [align=left] {\begin{minipage}[lt]{19.56pt}\setlength\topsep{0pt}
    $\displaystyle 23~24$
    \end{minipage}};
    \draw (357.25,206.26) node  [font=\footnotesize] [align=left] {\begin{minipage}[lt]{19.56pt}\setlength\topsep{0pt}
    $\displaystyle 12~23~24$
    \end{minipage}};
    \draw (254.27,207.86) node  [font=\footnotesize] [align=left] {\begin{minipage}[lt]{19.56pt}\setlength\topsep{0pt}
    $\displaystyle 12$
    \end{minipage}};
    \draw (263.74,309.82) node  [font=\footnotesize] [align=left] {\begin{minipage}[lt]{19.56pt}\setlength\topsep{0pt}
    $\displaystyle 13~14$
    \end{minipage}};
    \draw (150.04,268.06) node  [font=\footnotesize] [align=left] {\begin{minipage}[lt]{19.56pt}\setlength\topsep{0pt}
    $\displaystyle 12~13~14$
    \end{minipage}};
    
    \end{tikzpicture}
    
    \caption{The link of the Bergman fan of $L^{12}(X)$.}
    \label{fig-Bergman-S=12}
\end{figure}
    
    We now describe how this Bergman fan parametrizes lines whose projection with $S=\{1,2\}$ lies in the tropical plane (see Figure \ref{figure: projected trop lines S=12} for an example).
    The tropical plane meets the boundary $\{x_1 = \infty,x_2 = \infty\}$ in one point, namely the $0$-point. 
    Any tropical line projecting to this point must be contained in the plane $x_3 = 0$, hence it must be a degenerate line with three rays of direction $e_1$, $e_2$ and $-(e_1+e_2)$ (this corresponds to the fact that the Pl\"ucker coordinate $p_{3,4}$ of the line is $\infty$). 
    Up to lineality, the Bergman fan is just a point. The lineality space parametrizes the position of the vertex of the tropical line in the plane $x_3=0$. 
    The subdivision into six cones corresponds to the subdivision of the plane $x_3=0$ into six cones as follows: take the four orthants, and subdivide the positive and negative orthant further with the diagonal. \\

    \newsavebox{\tropLineThree}
    \sbox{\tropLineThree}{
    \begin{tikzpicture}[
    	x  = {(-0.9cm,0.076cm)},
            y  = {(0.0cm,-1cm)},
            z  = {(.6cm,.4cm)},
      edge/.style={line width=.8pt, line cap=round, teal},
      edge2/.style={line width=.8pt, line cap=round, blue},
      edge4/.style={line width=.8pt, line cap=round, blue, dotted},
      face/.style={draw=none, opacity=0.2, fill=blue},
      vertex/.style={draw = red, fill = red, line width=1.5pt},
      edge3/.style={line width=1.3pt, line cap=round, red, dotted},
      scale=1.2
      ]
    
      \coordinate (p0) at (0,0,0); 
      \coordinate (p1) at (-1,1,1); 
      \coordinate (p2) at (1,-1,1); 
      \coordinate (p3) at (1,1,-1); 
      \coordinate (p4) at (-1,-1,-1); 
    
      \coordinate (p12) at ($(p1)+(p2)$);
      \coordinate (p13) at ($(p1)+(p3)$);
      \coordinate (p14) at ($(p1)+(p4)$);
      \coordinate (p23) at ($(p2)+(p3)$);
      \coordinate (p24) at ($(p2)+(p4)$);
      \coordinate (p34) at ($(p3)+(p4)$);

      \coordinate (t) at ($(p1)+.2*(p2)$);
    
      \node[label = right:{\tiny $1$}]  at (p1) {};
      \node[label = above:{\tiny $2$}]  at (p2) {};
      \node[label = left:{\tiny $3$}]  at (p3) {};
      \node[label = above:{\tiny $4$}]  at (p4) {};
    
      \node[label = right:{\tiny $12$}]  at (p12) {};
      \node[label = below:{\tiny $13$}]  at (p13) {};
      \node[label = right:{\tiny $14$}]  at (p14) {};
      \node[label = left:{\tiny $23$}]  at (p23) {};
      \node[label = above:{\tiny $24$}]  at (p24) {};
      \node[label = left:{\tiny $34$}]  at (p34) {};
    
      \draw[face] (p0) -- (p2) -- (p24) -- (p4) -- cycle;
      \draw[edge] (p0) -- (p24); 
      \draw[face] (p0) -- (p1) -- (p14) -- (p4) -- cycle;
      \draw[edge] (p0) -- (p14); 
      \draw[face] (p0) -- (p3) -- (p34) -- (p4) -- cycle;
      \draw[edge] (p0) -- (p34); 
      \draw[edge2] (p0) -- (p4); 

      \draw[edge3] ($-2.15*(p1)-1.45*(p2)+(t)$) -- ($-.9*(p1)-.2*(p2)+(t)$);
      \draw[vertex] ($-.7*(p1)+(t)$) circle (.6pt);
    
      \draw[face] (p0) -- (p1) -- (p12) -- (p2) -- cycle; 
      \draw[edge] (p0) -- (p12);
      \draw[edge2] (p0) -- (p1);
    
      \draw[face] (p0) -- (p1) -- (p13) -- (p3) -- cycle; 
      \draw[edge] (p0) -- (p13);
    
      \draw[face] (p0) -- (p2) -- (p23) -- (p3) -- cycle; 
      \draw[edge] (p0) -- (p23);
      \draw[edge2] (p0) -- (p2); 
      \draw[edge2] (p0) -- (p3); 

      \draw[edge3] ($-.7*(p1)+(t)$) -- ($(p2)-.7*(p1)+(t)$);
      \draw[edge3] ($-.7*(p1)+(t)$) -- ($(p0)+(t)$); 
      \draw[edge3] ($-.9*(p1)-.2*(p2)+(t)$) -- ($-.7*(p1)+(t)$);

    \end{tikzpicture}}  
    
\begin{figure}[ht]
\begin{center}
    \begin{tikzpicture}
        \node at (0,0)   (A) {\usebox{\tropLineThree}};
    \end{tikzpicture}
    
    \caption{The combinatorial type of the tropical lines parametrized by the $2$-dimensional cone spanned by the rays $12$ and $12~13~14$ in the moduli space $L^{12}_{\trop}(\mathcal{X})$ in its fine subdivision.}\label{figure: projected trop lines S=12}
\end{center}
\end{figure}

    \mbox{ } \\
    \textbf{Projection with $S=\{1,2,3\}$}\label{subsec-S=123}:

    Since $\mathcal{M}(K_{\{4\}}) = \emptyset$, the linear matroid $\mathcal{M}^{123}$ of $L^{123}(X)$ is isomorphic to $U_{1,1}^3 \oplus U_{0,3}$. Now $14, 24$ and $34$ are loops, and the unique basis is given by $\{12,13,23\}$. The corresponding lattice of flats is shown in Figure \ref{figure: generic lattice for L^123(X)}. 
    The largest non empty stratum of $L_{\trop}^{123}(\mathcal{X})$ is given by cones $C_{F_\bullet}$ corresponding to chains of flats starting with $F_0 = \{14,24,34\}$.
    Projecting away the infinite coordinates, this is the Bergman fan of $\mathcal{M}^{123}/\small{F_0}$.

    \begin{figure}[ht]
\begin{center}
    
    \begin{tikzpicture}[scale = 1.5,
                        edge/.style={black, line width=0.9pt, line cap=round},
                        color = {black}]
      
    
    \tikzstyle{node}=[text=black, inner sep=3pt, rectangle, rounded corners=3pt,fill=white, draw=none]
    
    \coordinate (H1236)  at (0, .9);
    \coordinate (H1456) at (2, .9);
    \coordinate (H23456) at (4, .9);
    
    \coordinate (v16) at (0.3,-.2);
    \coordinate (v236) at (2,-.2);
    \coordinate (v456) at (3.7,-.2);
    \coordinate (b) at (2,-0.9);
    \coordinate (t) at (2,1.6);
    
    \foreach \i/\k in {1236/16, 1236/236,1456/16,1456/456,23456/236,23456/456} {
       \draw[edge] (H\i) -- (v\k);
       }
    
    \foreach \i in {1236, 1456,23456} {
       \draw[edge] (H\i) -- (t);
      }
      
    \foreach \k in {16,236,456} {
       \draw[edge] (v\k) -- (b);
      }
    
    \node[node]  at (b) {\small $\emptyset$};
    
    \node[node] at (H1236) {\small  $12~13~14~24~34$};
    \node[node] at (H1456) {\small  $12~23~14~24~34$};
    \node[node] at (H23456) {\small $13~23~14~24~34$};

    \node[node] at (v16) {\small $12~14~24~34$};
    \node[node] at (v236) {\small $13~14~24~34$};
    \node[node] at (v456) {\small $23~14~24~34$};
    
    \node[node] at (t) {\small $$12~13~14~23~24~34$$};

    \end{tikzpicture}
    
    \caption{The lattice of flats $\mathcal{L}(\mathcal{M}(L^{123}(X)))$ for a generic space $X \leq K^4$.}\label{figure: generic lattice for L^123(X)}

\end{center}
\end{figure}
    
    
    The link of this fan looks as in Figure \ref{fig-Bergman-S=12} (except that the loops need to be removed in the labels of the rays). 
    The fan itself is just a plane $\mathbb{R}^2$, in the boundary of 
    $\barRR^6/\RR$ where $p_{14} = \infty$, $p_{24} = \infty$ and $p_{34} = \infty$. 
    Any tropical line parametrized by a Pl\"ucker vector in this boundary satisfies $x_4 = \infty$. 
    The Bergman fan parametrizes the position of the vertex of the tropical line in this boundary plane. 
    When we project with $S=\{1,2,3\}$, all coordinates are $\infty$, so the projection is contained in the (affine version) of the tropical plane.
\end{example}

\subsubsection{General case} \mbox{ } \\

Even if $L^S(X)$ does not have trivial coefficients, we can still describe its tropicalization in terms of its associated valuated matroid. 
Indeed, its tropical Plücker vector is obtained as valuation of its classical Plücker vector, which we already described in Section 3.
We next describe its valuated circuits (see Equation \eqref{equation: tropicalized circuits}).
%

We write $x_h$ for the coordinate $x_{[n] \setminus h}$ of $\TT\PP^{\binom{n}{d}-1}$, when $h$ is a hyperedge of size $n-d$.
\begin{proposition}\label{proposition: valuated circuits of L^S(X)}
    Let $X \leq K^n$ be of dimension $d+1$. 
    Then each valuated circuit of $L(X)$ comes from a circuit $C$ of $\text{D}_{n-d}(U_{n,n})$ via the formula
    \begin{equation}\label{equation: valuated circuits of L(X)}
        l_C = \bigoplus_{h \in C} ~ \val \big(
            \sum_{\mathcal{O}} \varepsilon_{\mathcal{O}}\prod_{ \substack{e \in C \setminus h \\ e \neq e_h}} q_{[n] \setminus s_{\mathcal{O}}(e)}
        \big) \odot x_h.
    \end{equation}
    Here the sums run over orientations $\mathcal{O}$ of the hypergraph with edges $C\setminus h$ and roots $I_h$, where $e_h \in C \setminus h$ and $I_h \subset e_h$ of size $n-d-1$ can be chosen arbitrary for each $h \in C$.
\end{proposition}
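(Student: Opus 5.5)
The plan is to apply the general description of valuated circuits of a linear space, Equation \eqref{equation: tropicalized circuits}, to $Z = L(X)$, and then evaluate the coefficients with the Plücker formula of Theorem \ref{theorem: Plücker coords of L^S(X)} for $S=\emptyset$. By Theorem \ref{theorem: L(X) is linear + V & U}, $L(X)$ is a linear space of rank $r = d+1$ in $K^{\binom{n}{d}}$, with coordinates indexed by $d$-sets $J$, or equivalently by hyperedges $h=[n]\setminus J$ of size $n-d$. Its circuits are the linear forms $c_I = \sum_{j\in I} \pm p_{I\setminus j}\, x_j$, where $I$ runs over $(d+2)$-subsets of the ground set. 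Tropicalizing gives $\bigoplus_{j\in I}\val(p_{I\setminus j})\odot x_j$.

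First, I identify which $I$ give genuine circuits. By the theory of linear spaces, the support of a nonzero $c_I$ is a circuit of the matroid $\mathcal{M}(L(X))$, and every circuit arises this way (take $I$ to be a circuit together with elements completing a basis of its span). Terms $x_j$ for $j$ outside the circuit $C\subseteq I$ have $p_{I\setminus j}=0$, and so drop out. For generic $X$, Theorem \ref{theorem: matroid of L(X)} identifies this matroid, after undoing the relabeling, with $\text{D}_{n-d}(U_{n,n})$. For arbitrary $X$, the matroid is a weak image, so its circuits are unions or dependent sets of circuits of $\text{D}_{n-d}(U_{n,n})$. I would therefore phrase the statement via circuits $C$ of $\text{D}_{n-d}(U_{n,n})$ with coefficients allowed to vanish. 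When a coefficient vanishes for special $X$, its term becomes $\infty$ and disappears from $l_C$, which is consistent. In the case $|C| = d+2$, $I=C$ directly. If $|C|<d+2$, I extend $C$ by a set $B$ of hyperedges so that $C\cup B$ spans; in that case $p_{(C\cup B)\setminus h}$ involves the extra edges $B$. The stated formula uses only $C\setminus h$, so I must read it with $C$ already padded to $d+2$ edges, or show that the extra edges contribute a common factor.

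Second, for each $h\in C$ I evaluate $p_{C\setminus h}$ using Theorem \ref{theorem: Plücker coords of L^S(X)} with $S=\emptyset$. The hypergraph has edges $C\setminus h$, and there are exactly $d+1$ of them. I choose $e_h\in C\setminus h$ and roots $I_h\subset e_h$. This gives
\[
p_{C\setminus h} = (-1)^{I_h}\sum_{\mathcal{O}}\varepsilon_\mathcal{O}\prod_{e\ne e_h} q_{[n]\setminus s_\mathcal{O}(e)},
\]
which matches the summand in \eqref{equation: valuated circuits of L(X)} up to the sign $(-1)^{I_h}$. Valuations are insensitive to signs, so the global sign $(-1)^{I_h}$ and the circuit signs $\pm$ in $c_I$ disappear. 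Because the theorem allows free choices, $e_h$ and $I_h$ may be chosen independently for each $h$. The one subtlety is that Plücker coordinates are only defined up to a common scalar. The normalization in Theorem \ref{theorem: Plücker coords of L^S(X)} is a fixed one, independent of $J_1,\dots,J_{d+1}$: it is the division by $(-1)^{I_0}q_{[n]\setminus I_0}$ justified by Lemma \ref{lemma: Plücker coords of L(X) changing I_0}. So all coefficients of $c_I$ are scaled consistently and $l_C$ is a well-defined tropical linear form.

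The main obstacle is the padding issue for circuits of size smaller than $d+2$, that is, reconciling the ordering and the extra edges. I would first try to show that for any circuit $C$ of $\text{D}_{n-d}(U_{n,n})$, extending by a set $B$ of edges yields $p_{(C\cup B)\setminus h} = m_B\cdot p'_{C\setminus h}$. Here $m_B$ is independent of $h$, and $p'$ is the orientation sum on $C\setminus h$ with roots determined by an orientation of $B$. This would follow by choosing $I_0$ inside an edge of $B$ and noting that orientations decompose. If that fails, I would state the result for the $(d+2)$-element dependent sets $I$, which is what the formula literally indexes, and then note that valuated circuits are exactly the supports-minimal ones among these.
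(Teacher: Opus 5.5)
\textbf{There is a genuine gap: the case $|C|<d+2$, which is the heart of the proposition, is not proved.} Your computation settles only circuits with $|C|=d+2$. In that case $I=C$, the hypergraph $C\setminus h$ has $d+1$ edges, and the Plücker coordinate theorem gives the coefficient directly. But this is the exceptional case. Circuits of $\text{D}_{n-d}(U_{n,n})$ are typically smaller: in cocodim $1$ they are the cycles of $K_n$, e.g.\ triangles, while $d+2=n$. Then the summand in $l_C$ is an orientation sum over only $|C|-1<d+1$ edges, which is not a Plücker coordinate at all. So the padding step you postpone is the real content, and your proposal does not supply it.

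What has to be shown concerns every $(d+2)$-set $I$ of full rank in $\text{D}_{n-d}(U_{n,n})$. Only these matter: $w_{I\setminus h}<\infty$ forces $I\setminus h$ to be a basis of $\text{D}_{n-d}(U_{n,n})$, so $I$ contains a unique circuit $C$ and $c_I$ is supported in $C$. For such $I$ you must show that $w_{I\setminus h}$ is the valuation of the $C\setminus h$ orientation sum, tropically multiplied by a quantity independent of $h$.

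The missing idea is to put the roots inside the circuit, $I_h\subset e_h\in C\setminus h$. Then use the identity
\[
\Bigl|\bigcup_{e\in C}e\Bigr| = \Bigl|\bigcup_{e\in C\setminus h}e\Bigr| = n-d-2+|C|,
\]
which follows from minimal dependence in the Dilworth truncation. In every orientation of $I\setminus h$ with roots $I_h$, this forces:
\begin{itemize}
\item the targets of $C\setminus h$ to be exactly $\bigl(\bigcup_{e\in C}e\bigr)\setminus I_h$;
\item the targets of $I\setminus C$ to be exactly $[n]\setminus\bigcup_{e\in C}e$.
\end{itemize}
So orientations split into pairs $(\mathcal{O}_1,\mathcal{O}_2)$, where $\mathcal{O}_2$ orients $I\setminus C$ with the $h$-independent roots $\bigl(\bigcup_{e\in C}e\bigr)\cap\bigl(\bigcup_{e\in I\setminus C}e\bigr)$.

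Your suggestion to place $I_0$ in an edge of the padding does not achieve this in general. Unless $I_0\subseteq\bigcup_{e\in C}e$, the root set of the restriction to $C\setminus h$ varies with the orientation, and no product structure appears.

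Two further ingredients are needed.
\begin{itemize}
\item \textbf{Sign compatibility.} The signed sum factors only if $\varepsilon_{\mathcal{O}}=\pm\varepsilon_{\mathcal{O}_1}\varepsilon_{\mathcal{O}_2}$ with a sign independent of $\mathcal{O}$. The paper checks this: the numbers of increasing edges add up, and $\sign(\tau\circ\pi_{\mathcal{O}}\circ\sigma)=\sign(\pi_{\mathcal{O}_1})\sign(\pi_{\mathcal{O}_2})$ for reordering permutations $\sigma,\tau$ that are fixed once $h$ and $I_h$ are. Your remark that valuations ignore signs only removes overall signs. It does not touch the relative signs inside the sum, which decide whether it factors at all.
\item \textbf{The converse.} To show that each $l_C\not\equiv\infty$ actually occurs (which the paper also proves), the padding must make the $h$-independent factor nonzero. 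An arbitrary $B$ may give $m_B=0$ and hence $c_I\equiv\infty$. The paper takes $A\subseteq\bigcup_{e\in C}e$ of size $n-d-1$ with $q_{[n]\setminus A}\neq 0$ and sets $I=C\cup\{Av : v\notin\bigcup_{e\in C}e\}$. Each $Av$ is then forced to have target $v$, and the factor is $\pm q_{[n]\setminus A}^{\,d+2-|C|}$.
\end{itemize}

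Finally, your fallback of stating the result only for $(d+2)$-sets $I$ would not prove the proposition as formulated, since its formula involves only the edges of $C\setminus h$.
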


We emphasize that the coefficients in \eqref{equation: valuated circuits of L(X)} need not be finite, unless $X$ is generic.

\begin{proof}
    Let $w = (w_H)_H$, with $H \subseteq \binom{[n]}{n-d}$ of size $d+1$, denote the tropical Plücker vector of $L(X)$. Furthermore take a subset $I \subseteq \binom{[n]}{n-d}$ consisting of $d+2$ hyperedges and consider the valuated circuit
    \begin{equation*}
        c_I = \bigoplus_{h \in I} w_{I \setminus h} \odot x_h.
    \end{equation*}
    The coefficient $w_{I \setminus h}$ has a chance at being finite only if $I \setminus h$ is a basis of $\text{D}_{n-d}(U_{n,n})$, as the latter has the maximal amount of bases among all $\mathcal{M}(L(X))$. Thus we need $I$ to contain exactly one circuit $C$ of $\text{D}_{n-d}(U_{n,n})$ or $c_I$ will be identically $\infty$.
    With this assumption in place $I \setminus h$ is basis exactly when $h \in C$ and $c_I$ reduces to
    \begin{equation*}
        c_I = \bigoplus_{h \in C} w_{I \setminus h} \odot x_h.
    \end{equation*}
    Note $|C| \geq 3$. Given $h \in C$ we apply Proposition \ref{theorem: Plücker coords of L^S(X)} to write 
    $$ 
    w_{I \setminus h} = \val \big(
            \sum_{\mathcal{O}} \varepsilon_{\mathcal{O}}\prod_{ \substack{e \in I \setminus h \\ e \neq e_h}} 
            q_{[n] \setminus s_{\mathcal{O}}(e)} \big), 
    $$
    where $e_h \in C \setminus h$, $I_h \subset e_h$ is of size $n-d-1$ and the sum runs over all orientations of $I \setminus h$ with roots $I_h$. It is an easy exercise to show that $C$ being a circuit of $\text{D}_{n-d}(U_{n,n})$ implies
    $$ |\bigcup_{e \in C} e \hspace{.5mm}| = |\bigcup_{e \in C \setminus h} e \hspace{.5mm}| = n-d-1+|C \setminus h|.$$ 
    Essentially for cardinality reasons, any orientation $\mathcal{O}$ of $I \setminus h$ with roots $I_h$ restricts to two smaller orientations $\mathcal{O}_1$ of $C \setminus h$ and $\mathcal{O}_2$ of $I \setminus C$ 
    s.t. $r(\mathcal{O}_1) = I_h$ and $r(\mathcal{O}_2) = ( \bigcup_{e \in C} e ) \cap ( \bigcup_{e \in I \setminus C} e )$. 
    On the other hand, any two such orientations combine to an orientation $\mathcal{O}$ of $I \setminus h$ with roots $I_h$.
    Clearly the numbers of increasing edges just sum up when combining $\mathcal{O}_1$ and $\mathcal{O}_2$.
    To relate the different target maps, list $I \setminus h = \{h_1, \ldots, h_{d+1}\}$ in reverse lexicographical order. 
    Likewise, write 
    $C \setminus h = \{h_1', \ldots, h_r'\}$ and $I \setminus C = \{h_{1}'', \ldots, h_{d+1-r}''\}$ both in reverse lexicographical order. 
    Then there is a permutation $\sigma \in S_{d+1}$ such that $h_i' = h_{\sigma(i)}$ for $i \in [r]$ and $h_j'' = h_{\sigma(j+r)}$ for $j \in [d+1-r]$. In other words,
    $$
    (\pi_\mathcal{O} \circ \sigma)(i) = \begin{cases}
        t_\mathcal{O}(h_i') = \pi_{\mathcal{O}_1}(i) & i \leq r \\
        t_\mathcal{O}(h_{i-r}'') = \pi_{\mathcal{O}_2}(i-r)& i > r.
    \end{cases}
    $$ 
    We must have 
    $ \textnormal{im } \pi_{\mathcal{O}_1} = (\bigcup_{e \in C} e) \setminus I_h$ and 
    $ \textnormal{im } \pi_{\mathcal{O}_2} = [n] \setminus (\bigcup_{e \in C} e)$ regardless of how exactly the orientations look like.
    Let $\tau$ be the unique permutation of $[n] \setminus I_h$ such that $\tau(x) < \tau(y)$ whenever $x \in \textnormal{im } \pi_{\mathcal{O}_1}, y \in \textnormal{im } \pi_{\mathcal{O}_2}$ and whose restriction to both images is order preserving.
    With this setup one can count inversions to see
    $$ 
    \sign(\tau \circ \pi_{\mathcal{O}} \circ \sigma) = 
    \sign(\pi_{\mathcal{O}_1})\sign(\pi_{\mathcal{O}_2}).
    $$
    Putting everything together we get 
    \begin{align*}
        w_{I \setminus h} 
        =&  \val \big(
            \sum_{\mathcal{O}} \varepsilon_{\mathcal{O}} \prod_{ \substack{e \in I \setminus h \\ e \neq e_h}} 
            q_{[n] \setminus s_{\mathcal{O}}(e)} 
            \big) \\
        =&  \val \big(
            \sum_{\mathcal{O}_1} \varepsilon_{\mathcal{O}_1} \prod_{ \substack{e \in C \setminus h \\ e \neq e_h}} 
            q_{[n] \setminus s_{\mathcal{O}_1}(e)} 
            \big) 
            \odot
            \val \big(
            \sum_{\mathcal{O}_2} \varepsilon_{\mathcal{O}_2} \prod_{e \in I \setminus C} 
            q_{[n] \setminus s_{\mathcal{O}_2}(e)} 
            \big), \label{equation: }
    \end{align*}
    where in the second equality we made use of $\val(a) = \val(-a)$ for any $a \in K$. Note the second factor does not depend on $h \in C$, so $c_I$ is a tropical multiple of the tropical polynomial in \eqref{equation: valuated circuits of L(X)}. On the other hand, suppose we start with a circuit $C$ of $\text{D}_{n-d}(U_{n,n})$ such that \eqref{equation: valuated circuits of L(X)} is not identically $\infty$. This implies $q_{[n] \setminus A} \neq 0$ for some $A \subseteq \bigcup_{e \in C} e$ of size $n-d-1$. Take such $A$ and consider $I = C \cup \{Av : v \in [n] \setminus (\bigcup_{e \in C} e)\}$. Then $c_I$ arises from \eqref{equation: valuated circuits of L(X)} by scaling with a nonnegative power of $\val( q_{[n] \setminus A})$, which is finite. Hence all valuated circuits of $L(X)$ are of the claimed form after some scaling.
\end{proof}

%

To obtain the valuated circuits of $L^S(X)$, we use Proposition \ref{proposition: matroid of L^S(X)} and replace $\text{D}_{n-d}(U_{n,n})$ with $\mathcal{M} = \mathcal{M}(L^S(X))$ for generic $X$. 
The circuit $C$ in the above proof must fall into a summand of $\mathcal{M}$. 
So $C$ is either a circuit in $\text{D}_{n-d}(U_{n,n})_{|E_S}$,
a loop, or lives in one of the rank one summands of $\mathcal{M}$. 
The latter case gives valuated circuits of the form
$$ \val(q_{Ja})  \odot x_J \oplus \val(q_{J'a}) \odot x_{J'} $$ where $J,J' \in E_{S,a}, a \in S$ such that $q_{Ja}, q_{J'a} \neq 0$, and it is clear what to do with the loops.
Note that $D_{n-d}(U_{[n] \setminus S})_{|E_S} = D_{n-d}(U_{[n] \setminus S})$, where $U_{[n] \setminus S}$ denotes the free matroid on $[n]\setminus S$.
Circuits of $D_{n-d}(U_{[n] \setminus S})$ can be interpreted as hypergraphs on $[n]\setminus S$ and give valuated circuits according to \eqref{equation: valuated circuits of L(X)}. 

In cocodim 1, any circuit $C$ is given by a cycle in $K_n$. After taking away an edge $e$ there is only one orientation of $C\setminus e$ with a given root. Choosing this root to be an endpoint of $e$ easily leads to the following corollary:

\begin{corollary}\label{corollary: valuated circuits of L^S(X) in cocodim 1}
    Let $X \leq K^n$ be of codimension $1$. Then up to tropical scaling the valuated circuits of $L(X)$ are given by
    \begin{equation}\label{equation: valuated circuits of L(X) in cocodim 1}
        l_C = \bigoplus_{e \in E(C)} \bigodot_{v \in V(C) \setminus e} \val(q_{[n] \setminus v}) \odot x_e,
    \end{equation}
    where $C$ is a (vertex disjoint) cycle in $K_n$ with vertices $V(C)$ and edges $E(C)$.
\end{corollary}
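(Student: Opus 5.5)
Specialise Proposition \ref{proposition: valuated circuits of L^S(X)} to $n-d=2$, where hyperedges are ordinary edges, and evaluate the sum for each $h$ with a good choice of $e_h$ and $I_h$.

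\textbf{Step 1: circuits.} In cocodim $1$, Theorem \ref{theorem: matroid of L(X)} together with the discussion before Proposition \ref{proposition: matroid of L^S(X) in cocodim 1} gives $\text{D}_2(U_{n,n})=\mathcal{M}(K_n)$. Hence the circuits $C$ in Proposition \ref{proposition: valuated circuits of L^S(X)} are exactly the simple cycles of $K_n$. Note also that $x_h=x_{[n]\setminus h}$ is the coordinate attached to the edge $h=e$.

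\textbf{Step 2: one coefficient.} Fix a cycle $C$ with vertex set $V(C)$ of size $k$, and an edge $e=\{u,w\}\in E(C)$. Then $C\setminus e$ is a Hamiltonian path of $V(C)$ from $u$ to $w$. Roots have size $n-d-1=1$, so choose $e_e$ to be the path edge incident to $u$ and $I_e=\{u\}$. An orientation with root $\{u\}$ and pairwise distinct targets must orient every path edge away from $u$; otherwise two edges would share a target, or some vertex other than $u$ would be a source but not a target. Thus there is exactly one orientation $\mathcal{O}$. In it, the sources of the path edges are the vertices $v\in V(C)\setminus\{w\}$, each used once. Omitting $e_e$, whose source is $u$, the product is $\prod_{v\in V(C)\setminus\{u,w\}} q_{[n]\setminus v}$. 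The sign $\varepsilon_{\mathcal{O}}$ is irrelevant because $\val(a)=\val(-a)$. So the coefficient of $x_e$ is $\bigodot_{v\in V(C)\setminus e}\val(q_{[n]\setminus v})$, which is the claimed formula.

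\textbf{Step 3: consistency.} Proposition \ref{proposition: valuated circuits of L^S(X)} allows each $e_h,I_h$ to be chosen arbitrarily, and it already states that every valuated circuit has the form \eqref{equation: valuated circuits of L(X)} up to scaling. Therefore no further consistency check between different edges of $C$ is needed; the result holds up to tropical scaling.

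The only real point to verify is the uniqueness of the orientation in Step 2. It follows from a counting argument: the $k-1$ path edges need $k-1$ distinct targets among the $k-1$ non-root vertices, and propagating from the root along the path forces each edge's direction.
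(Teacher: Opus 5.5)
Your proposal is correct and follows the paper's own argument. You specialise Proposition~\ref{proposition: valuated circuits of L^S(X)} to $n-d=2$, so that the circuits are cycles of $K_n$, use that the path $C\setminus e$ has a unique orientation with a given root, and take that root to be an endpoint of $e$; your Step~2 only writes out the source bookkeeping that the paper summarises as ``easily leads to''.
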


\begin{remark}
    We note that in cocodim 1, we do not need to consider arbitrary cycles in order to cut out $\trop L(X)$. 
    Indeed, any cycle $C$ of length greater than $3$ may be divided into two smaller cycles $C_1$ and $C_2$ by adding a singe new edge. 
    In this case, one can show $\trop (l_{C_1}) \cap \trop (l_{C_2}) \subseteq \trop (l_C)$. 
    Thus it is enough to consider triangles. 
    
    With some effort it can be shown that this argument generalizes: 
    If a circuit $C$ of $\text{D}_{n-d}(U_{n,n})$ can be subdivided into smaller circuits $C_1, C_2$ it remains true that $\trop (l_{C_1}) \cap \trop (l_{C_2}) \subseteq \trop (l_C)$. 
    However it is no longer clear what the "indecomposable" circuits look like.
\end{remark}

\section{The linear degenerate short flag variety}\label{section: 4 - short flags}
In this section, we use our results on $L^S$-spaces to describe tropicalizations of linear degenerate short flag varieties.
\subsection{Covering by $L^S$-spaces in cocodim 1}\label{section: 4 - covering short flags}
We first introduce linear degenerate short flag varieties and discuss how they can be covered by $L^S$-spaces.
\begin{definition}\label{definition: flag varieties}
    Fix an increasing sequence of integers $\underline{d} = 0 < d_1 < d_2 < \ldots < d_r  < n$. The \emph{partial flag variety} $\flag_{\underline{d}}$ is the set of all \emph{partial flags} 
    $$V_{d_1} \leq \ldots \leq V_{d_r} ,$$ 
    where each $V_{d_i} \leq K^n$ is a subspace of dimension $d_i$.
    For $0 < k < n-1$, the special case of $ \underline{d} = (k,k+1)$ is denoted by $\shortFlag(k)$ and called a \emph{short flag variety}. In case $k = n-2$ we drop $k$ from our notation.
    
    Replace the conditions $V_{d_i} \leq V_{d_{i+1}}$  by $f_i(V_{d_i}) \leq V_{d_{i+1}}$, where $f_i: K^n \to K^n$ is a linear map for any $1 \leq i \leq r-1$. Doing so leads to the \emph{linear degenerate partial flag variety} $\flag_{\underline{d}}^f$ \cite{CFFFM19}, where we write $f = (f_1, \ldots, f_{r-1})$. We are especially interested in \emph{$R$-linearly degenerate short flag varieties}, in which the single map $f=f_1$ is the projection $\text{pr}_R$, with $R \subseteq [n]$.
    This case is denoted by $\shortFlag^R(k)$ or just $\shortFlag^R$ when $k = n-2$.
\end{definition}

We always view linear degenerate flag varieties as subvarieties in a suitable product of Grassmannians. 
Clearly any linearly degenerated short flag variety is isomorphic to $\shortFlag^R(k)$, for some $R \subseteq [n]$ and suitable $k$. \\

The goal of this section is to cover the linear degenerate short flag varieties $\shortFlag^R$ by $L^S$-spaces, so that we can exploit our results from earlier sections to describe the cones contained in $\trop \shortFlag^R$.
In what follows we will usually suppress the notation of residue classes. It should always be clear where elements live. \\

The natural action of the torus $(K^*)^n$ on $K^n$ induces an action of $T =(K^*)^n/K^*$ on $\Gr(r,n)$ for any $r$. Given $Y \in \Gr(r,n)$ and $t \in T$, we can express the action of $t$ on $Y$ easily in terms of its Plücker coordinates. For any $J \in \binom{[n]}{r}$ we have $p_J(tY) = t_Jp_J(Y)$ where $t_J = \prod_{j \in J}t_j$. Note that the map
\begin{equation}\label{equation: torus action}
    \phi_{n-r}: T \to (K^*)^{\binom{[n]}{r}}/(K^*) ~;~ t \mapsto (t_J)_J
\end{equation}
is a monomorphism of algebraic groups giving rise to an action of $T$ on the whole projective space $\PP^{\binom{[n]}{r}-1}$. 
%
It is sometimes useful to scale the $t_J$ by $t_{[n]}^{-1}$ so that $t_J = \prod_{j \notin J}t_j^{-1}$. This in particular shows that $\phi_1$ is an isomorphism. 

Take again $X \leq K^n$ a subspace of dimension $d+1$ and let $t \in T$. 
Clearly we have $L(tX) = tL(X)$. More generally, for any $S \subseteq [n]$ one obtains $L^S(tX) = tL^S(X)$, because projections commute with the torus action on $K^n$.

We once again restrict our attention to cocodim 1 and choose $X$ generic, with trivial coefficients. For example $X$ with constant Plücker vector $q \equiv 1$ works. Note that any other subspace $X' \in \Gr(n-1,n)$ is of the form $tX^S$ for some $t \in T$ and $S \subset [n]$.
Thus we can cover the short flag variety using $L^S$-spaces:
\begin{equation*}
    \shortFlag = \bigcup_{S \subset [n]} \bigcup_{t \in T} tL^S(X) \times tq^S.
\end{equation*}
Recall that $q^S_I = q_I$ if $S \subseteq I$ and $q^S_I = 0$ otherwise. This covering is a somewhat wasteful in the sense that for nonempty $S$ the stabilizer of $q^S$ is non trivial as well. The quotient of such a stabilizer is isomorphic to the smaller torus
\begin{equation}\label{equation: torus subgroups}
    T_S = \{t \in T : t_i = t_j ~\text{ for all }~ i,j \in S\},
\end{equation}
which acts freely on the orbit $Tq^S$. Then
\begin{equation}\label{equation: covering short flag}
    \shortFlag = \bigcup_{S \subset [n]} \bigcup_{t \in T_S} tL^S(X) \times tq^S.
\end{equation}
Now let $R \subseteq [n]$ and consider the $R$-linearly degenerate short flag variety $\shortFlag^R$.
We can cover it just as the usual short flag variety leading to
\begin{equation*}
    \shortFlag^R = \bigcup_{S \subset [n]} \bigcup_{t \in T_S} tL^{R \cup S}(X) \times tq^S.
\end{equation*}
Applying Theorem \ref{theorem: L^S(X) = L(X^S) in cocodim 1}, we know $L^{R \cup S}(X)$ is linear if $R\cup S \subset [n]$ and $L^{R \cup S}(X) = \Gr(n-2,n)$ whenever $R\cup S = [n]$. Thus
\begin{equation}\label{equation: covering degenerate short flag}
    \shortFlag^R = 
    \bigcup_{R^C \subseteq S \subset [n]} ~ \bigcup_{t \in T_S} \Gr(n-2,n) \times tq^S \cup 
    \bigcup_{R^C \nsubseteq S \subset [n]} ~ \bigcup_{t \in T_S} tL^{R \cup S}(X) \times tq^S.
\end{equation}

\subsection{Tropicalizing the linear degenerate short flag variety in cocodim 1}\label{section: 4 - trop short flags}
Just as in the previous subsection, we suppress the notation of residue classes. \\

On the tropical side, the torus actions in the previous subsection can be seen as translation. More precisely the embedding $\phi_{n-r}:T \hookrightarrow (K^*)^{\binom{[n]}{r}}/K^*$ from equation \eqref{equation: torus action} tropicalizes to
\begin{equation}
    \psi_{n-r}: \RR^n/\RR \hookrightarrow \RR^{\binom{[n]}{r}}/\RR; s \mapsto (~ \sum_{j \in J}s_j~)_J,
\end{equation}
thus giving us an action of $\RR^n/\RR$ on the tropical projective space $\TT\PP^{\binom{[n]}{r}-1}$. Clearly we have $$\val \circ~ \phi_{n-r} = \psi_{n-r} \circ \val.$$ 
Now for $S \subset [n]$ the closed subgroup $T_S$ tropicalizes to the subspace $\langle e_i ~:~ i \notin S \rangle_\RR$ of $\RR^n/\RR$, and thanks to \cite[Corollary 3.2.13]{MS15} 
$$
\trop ((\phi_2 \times \phi_1)(T_S)) = (\psi_2 \times \psi_1)( \trop( T_S)).
$$
Explicitly $\trop ((\phi_2 \times \phi_1)(T_S))$ is the image of the subspace
$$
A_S = \langle a_i \times e_{[n] \setminus i} ~:~ i \notin S \rangle_\RR 
$$ 
in $\RR^{\binom{[n]}{n-2}}/\RR \times \RR^{\binom{[n]}{n-1}}/\RR$, which we just denote as $A_S$ again. 
Here $(a_i)_J = |\{i\} \setminus J| = |\{i\} \cap J^C|$
for $J \in \binom{[n]}{n-2}$, and the $e_{[n] \setminus i}$ denote standard basis vectors in $\RR^{\binom{[n]}{n-1}}$.

\begin{theorem}\label{theorem: trop (degen) short flag}
    Let $X \in \Gr(n-1,n)$ be generic and with trivial valuations.
    Then the tropicalization of the short flag variety can be covered using tropicalizations of $L^S$-spaces of $X$. More precisely, we have the disjoint union
    \begin{equation*}
        \trop( \shortFlag) = \bigcup_{S \subset [n]} A_S +  \trop( L^S(X)) \times w^S,
    \end{equation*}
    where $w^S = \val(q^S) = \infty \cdot \sum_{i \in S} e_{[n] \setminus i}$. 
    Similarly, $\shortFlag^R$ tropicalizes to the disjoint union \vspace{.5ex}
    \begin{align*}
        \trop( \shortFlag^R) = 
        &\bigcup_{R^C \subseteq S \subset [n]} ~ \trop (\Gr(n-2,n)) \times (\RR^{\binom{[n]}{n-1}}/\RR + w^S) ~~ \cup \\
        &\bigcup_{R^C \nsubseteq S \subset [n]} ~ A_S + \trop(L^{R \cup S}(X)) \times w^S.
    \end{align*}
    
\end{theorem}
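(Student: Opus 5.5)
The plan is to tropicalize the covering \eqref{equation: covering short flag} (resp.\ \eqref{equation: covering degenerate short flag}) stratum by stratum. Theorem \ref{theorem: stratification of trop X} decomposes $\trop(\shortFlag)$ according to the torus orbits of the second factor $\PP^{n-1}\cong\Gr(n-1,n)$. The orbit where exactly the coordinates $q_{[n]\setminus i}$ with $i\in S$ vanish is precisely $Tq^S=T_Sq^S$. Its tropicalization is $\RR^{\binom{[n]}{n-1}}/\RR+w^S$ restricted to the finite coordinates. These strata are pairwise disjoint, which gives disjointness of the union. It then suffices to show, for each $S$, that
\[
\trop\Big(\bigcup_{t\in T_S} tL^{S}(X)\times tq^S\Big) = A_S+\trop(L^S(X))\times w^S ,
\]
and the analogous statement with $L^{R\cup S}$.

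To establish this, I would use the isomorphism of varieties $T_S\times L^S(X)\to Z_S:=\bigcup_{t\in T_S}tL^S(X)\times tq^S$, $(t,Y)\mapsto (tY,tq^S)$. It is injective because $T_S$ acts freely on $Tq^S$, which recovers $t$ from the second factor. Thus $Z_S$ is the image of $L^S(X)\times\{q^S\}$ under the action of the torus $(\phi_2\times\phi_1)(T_S)$. For a torus $T'$ acting diagonally by monomial maps, one has $\trop(T'\cdot V)=\trop(T')+\trop(V)$. I would prove this from the Fundamental Theorem via pointwise valuations: $\val(t\cdot y)=\psi(\val t)+\val(y)$ coordinatewise, which gives the inclusion ``$\supseteq$'' after taking closures. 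The reverse inclusion comes from lifting a point of $\trop(Z_S)$ to $K$-points, using that $K$ is algebraically closed with surjective valuation onto a dense subgroup, and then splitting off the torus part. Combined with the already stated identity $\trop((\phi_2\times\phi_1)(T_S))=A_S$ (via \cite[Corollary 3.2.13]{MS15}) and $\val(q^S)=w^S$ (trivial valuations, $q\equiv1$), this yields the claimed summand. Here $\trop(L^S(X))$ is understood in $\TT\PP^{\binom{n}{n-2}-1}$ with all its strata, and the translation by $A_S$ acts on the finite coordinates.

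For $\shortFlag^R$ I would repeat the argument with $L^{R\cup S}(X)$. When $R^C\subseteq S$, Theorem \ref{theorem: L^S(X) = L(X^S) in cocodim 1} gives $L^{R\cup S}(X)=\Gr(n-2,n)$, which is $T$-invariant. The union over $t\in T_S$ is then just $\Gr(n-2,n)\times Tq^S$, whose tropicalization is $\trop(\Gr(n-2,n))\times(\RR^{\binom{[n]}{n-1}}/\RR+w^S)$. This holds because the tropicalization of a product of subvarieties of (products of) toric varieties is the product of their tropicalizations. The $A_S$-translate is then absorbed, since $\trop\Gr$ is invariant under the lineality $\psi_2(\RR^n)$.

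I expect the main obstacle to be the careful justification that tropicalization commutes with the torus translation when $L^S(X)$ meets boundary strata (coordinates $p_J=0$). One must check that the torus orbit and closure interact correctly there, and in particular that no extra limit points of $\overline{Z_S}$ sit in the same stratum of the second factor. Such points would have second coordinate in $Tq^{S'}$ for some $S'\supsetneq S$ and so belong to another piece of the union. The first approach I would try is to work stratum by stratum in both factors and apply the very affine statement (a torus orbit of a very affine variety is $T'\times V$ up to finite quotient). There, Theorem \ref{theorem: stratification of trop X} together with \cite[Corollary 3.2.13]{MS15} handles everything directly.
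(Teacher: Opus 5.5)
Your overall plan matches the paper's: stratify via Theorem~\ref{theorem: stratification of trop X}, write each piece as the $T_S$-translate of $L^S(X)\times q^S$, and compare pointwise valuations. The easy inclusion (continuity of addition) and your treatment of the case $R^C\subseteq S$ are fine. Two steps fail as written, and the second is a genuine gap.

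\textbf{Injectivity.} The map $(t,Y)\mapsto(tY,tq^S)$ on $T_S\times L^S(X)$ is not injective when $S\neq\emptyset$. Indeed $\dim T_S=n-|S|$, while $\dim Tq^S=n-|S|-1$. The stabilizer of $q^S$ in $T_S$ is the one-dimensional torus of those $t$ that are constant on $S$ and constant on $[n]\setminus S$. Such $t$ fix $X^S$, hence preserve $L^S(X)=L(X^S)$. So $(t,Y)$ and $(1,tY)$ have the same image, and the fibres are one-dimensional, not finite; this also contradicts your ``finite quotient'' remark. Tropically, this stabilizer is the direction $e_{E_S}\times 0$ inside $A_S$ found in Lemma~\ref{lemma: A_S up to infinities}. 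The sentence before \eqref{equation: covering short flag} asserting that $T_S$ acts freely has the same flaw. Nothing depends on injectivity, though: only $\shortFlag\cap(\mathcal{O}_F\times\mathcal{O}_S)=T_S\cdot\big((L^S(X)\cap\mathcal{O}_F)\times q^S\big)$ is used.

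\textbf{The reverse inclusion.} Your argument for $\trop(Z_S)\subseteq A_S+\trop(L^S(X))\times w^S$ gives nothing beyond what is already known. Since $Z_S$ consists of the products $t\cdot y$, one has $\val(Z_S)=\val\big((\phi_2\times\phi_1)(T_S)\big)+\val\big(L^S(X)\times q^S\big)$ exactly. Lifting a point to $K$ and splitting off the torus part only recovers this identity.

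The actual content of the inclusion concerns the remaining points of $\trop(Z_S)=\overline{\val(Z_S)}$, for instance points with coordinates outside the value group. These cannot be lifted. Writing them as limits of $a_k+b_k$ does not help, because $a_k$ and $b_k$ need not converge. What you need is that the right-hand side is closed, and this is precisely the step the paper supplies.

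Within a single orbit $\mathcal{O}_F\times\mathcal{O}_S$, the set $\pi(A_S)+\trop(L^S(X)\cap\mathcal{O}_F)\times\pi(w^S)$ is a linear space plus the finitely many cones of a Bergman fan. It is therefore a finite union of polyhedral cones, hence closed, so the closure of the Minkowski sum equals the sum of the closures. Your fallback of working orbit by orbit in both factors is the right route, but this polyhedrality is the ingredient that makes it work. Equivalently, you can apply \cite[Corollary 3.2.13]{MS15} not only to $T_S$ but to the homomorphism of tori $T\times\mathcal{O}_F\to\mathcal{O}_F\times\mathcal{O}_S$, $(t,y)\mapsto(ty,tq^S)$, which is a homomorphism because $q\equiv 1$. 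Restrict it to the subvariety $T_S\times(L^S(X)\cap\mathcal{O}_F)$, whose image is the closed set $\shortFlag\cap(\mathcal{O}_F\times\mathcal{O}_S)$. The boundary strata are then handled by Theorem~\ref{theorem: stratification of trop X}.
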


\begin{proof}
    We want to apply Theorem \ref{theorem: stratification of trop X}, so we need to consider the intersections of $\shortFlag$ with torus orbits of $\PP^{\binom{[n]}{n-2}-1} \times \PP^{\binom{[n]}{n-1}-1}$. Such an intersection can only be nonempty for torus orbits of the form $\mathcal{O}_F \times \mathcal{O}_S$, where
    $$
    \mathcal{O}_S = \{\mu \in \mathbb{P}^{\binom{[n]}{n-1}} : \mu_{[n] \setminus i} = 0 \text{ if and only if } i \in S\}, 
    $$ 
    with $S \subset [n]$ and
    $$
    \mathcal{O}_F = \{\nu \in \mathbb{P}^{\binom{[n]}{n-2}} : \nu_{I} = 0 \text{ if and only if } I \in F\}
    $$ 
    for a proper flat $F$ in the linear matroid $\mathcal{M}(L^S(X))$ (see the discussion following Theorem \ref{theorem: very affine trop X is bergman fan}).
    Using \eqref{equation: covering short flag} we have
    $$ 
    \shortFlag \cap \mathcal{O}_F \times \mathcal{O}_S = \bigcup_{t \in T_S} tL^S(X)\cap \mathcal{O_F} \times tq^S = T_S( L^S(X)\cap \mathcal{O_F} \times q^S ) ,
    $$
    where $T_S$ acts via $\phi_2 \times \phi_1$ as in \eqref{equation: torus action}.
    To apply Theorem \ref{theorem: stratification of trop X} we need to project away zero coordinates of $ \shortFlag \cap \mathcal{O}_F \times \mathcal{O}_S$ and view $\mathcal{O}_F \times \mathcal{O}_S$ as a torus. Let $\pi$ be this projection, then 
    \begin{equation*}
        \shortFlag \cap \mathcal{O}_F \times \mathcal{O}_S = 
        \pi(\phi_2 \times \phi_1(T_S))((L^S(X)\cap \mathcal{O_F}) \times \pi(q^S)).
    \end{equation*} 
    We claim that taking valuations and passing to the closure on both sides gives
    $$ 
    \trop(\shortFlag \cap \mathcal{O}_F \times \mathcal{O}_S) = 
    \pi(A_S)+\trop(L^S(X)\cap \mathcal{O_F}) \times \pi(w^S),
    $$ 
    where we slightly abuse notation and just denote the tropicalization of $\pi$ by $\pi$ again. For the right hand side
    the inclusion "$\supseteq$" easily follows, because the closure of a Minkowski sum of two sets contains the Minkowski sum of their closures. 
    To achieve equality we need to see that the right hand side is already closed. However exploiting the Bergman fan structure of $\trop(L^S(X)\cap \mathcal{O_F})$ the right hand side is a finite union of cones and thus closed. 
    
    Putting all pieces $\trop(\shortFlag \cap \mathcal{O}_F \times \mathcal{O}_S)$ together we conclude that $\trop(\shortFlag)$ is as claimed. The proof for the degenerate case is similar.
\end{proof}

Our next goal is to derive a Bergman fan like structure on $\trop( \shortFlag)$. By Theorem \ref{theorem: trop (degen) short flag} $\trop(\shortFlag)$ is the union of cones $A_S + C \times w^S$, where $S$ varies over proper subsets of $[n]$ and $C$ is a cone in the extended Bergman fan of $\mathcal{M}(L^S(X))$. That is, $C$ is of the form
\begin{equation*}
    C_{F_\bullet} = \infty \cdot e_{F_0} + \cone(e_{F_1}, \ldots, e_{F_s}),
\end{equation*} 
living inside $\RR^{\binom{[n]}{n-2}}/\RR + \infty \cdot e_{F_0}$,
where $F_\bullet$ is a chain of flats in $\mathcal{L}(\mathcal{M}(L^S(X)))$. However, different choices of $F_\bullet$ may lead to the same result.

\begin{example}\label{example: different chains give same cone}
    Suppose $n = 4$ and $X \leq K^n$ as in Theorem \ref{theorem: trop (degen) short flag}. We identify $ \binom{[4]}{2}$ with $[6]$, and $\binom{[4]}{3}$ with $[4]$, using the lexicographical order. Let $S = \{1\}$. Taking the identifications into account, the lattice of flats for $\mathcal{M}(L^S(X))$ is depicted in Figure \ref{figure: generic lattice for L^1(X) relabeled}. Now consider the cone 
    $$
    C = A_1 + (\infty\cdot e_\emptyset + \cone(e_1)) \times w^1 = A_1 + \cone(e_1) \times (0,0,0, \infty)
    $$ 
    coming from the chain $\emptyset \subset 1$. This cone lives in the stratum 
    $$ 
    \RR^5 \times \RR^2 \cong \RR^6/\RR \times (\RR^3\times \infty)/\RR \subset \TT\PP^5 \times \TT\PP^3.
    $$
    %
    Note that $C$ in particular contains $\psi_2 \times \psi_1(-e_2-e_3-e_4) + 0 \times w^1$ which up to tropical scaling (in each factor) is just $e_{123} \times w^1$. This shows that we do not get a larger cone if we pass to the longer chain $\emptyset \subset 1 \subset 123$.
\end{example}

\begin{figure}[ht]
\begin{center}

\begin{tikzpicture}[scale = 1.5,
                    edge/.style={black, line width=0.9pt, line cap=round},
                    color = {black}]
  

\tikzstyle{node}=[text=black, inner sep=3pt, rectangle, rounded corners=3pt,fill=white, draw=none]

\coordinate (H123) at (0.7,.9);
\coordinate (H1456) at (1.9,.9);
\coordinate (H2456) at (3.1,.9);
\coordinate (H3456) at (4.2,.9);

\coordinate (v1) at (1,-.2);
\coordinate (v2) at (2,-.2);
\coordinate (v3) at (3,-.2);
\coordinate (v456) at (3.9,-.2);
\coordinate (b) at (2.5,-0.9);
\coordinate (t) at (2.5,1.6);

\foreach \i/\k in {123/1, 123/2,123/3,1456/1,1456/456,2456/2,2456/456,3456/3,3456/456} {
   \draw[edge] (H\i) -- (v\k);
   }

\foreach \i in {123, 1456,2456,3456} {
   \draw[edge] (H\i) -- (t);
  }
  
\foreach \k in {1,2,3,456} {
   \draw[edge] (v\k) -- (b);
  }

\node[node]  at (b) {\small $\emptyset$};

 \node[node] at (H123) {\small $123$};
 \node[node] at (H1456) {\small $1456$};
 \node[node] at (H2456) {\small $2456$};
 \node[node] at (H3456) {\small $3456$};
 
\foreach \k in {1, 2, 3, 456} {
   \node[node] at (v\k) {\small $\k$};
  }

\node[node] at (t) {\small $123456$};

\end{tikzpicture}

\caption{Relabeled lattice of flats $\mathcal{L}(\mathcal{M}(L^1(X)))$ for a generic space $X \leq K^4$.}\label{figure: generic lattice for L^1(X) relabeled}

\end{center}
\end{figure}

Recall that by Proposition \ref{proposition: matroid of L^S(X) in cocodim 1} we have the decomposition 
$$
\mathcal{M}(L^S(X)) = \mathcal{M}_S \oplus \mathcal{M}_{S,0} \oplus \bigoplus_{a \in S} \mathcal{M}_{S,a}.
$$
The summands are matroids on the subsets of $\binom{[n]}{n-2}$ defined by $E_S = \{ J : S \subseteq J \}, E_{S,0} = \{J : | S \setminus J| = 2\}$ and $E_{S,a} = \{J : S \setminus J = a \}$ for $a \in S$. In the above example, we identified $\{1,2,3\}$ with $\{12, 13, 14\}$ which is precisely $E_S$ when $S = \{1\}$. 

\begin{lemma}\label{lemma: A_S up to infinities}
    Let $S \subset [n]$ and $E_{S,0} \subseteq F \subset \binom{[n]}{n-2}$. Then $A_S + (\infty \cdot e_F) \times w^S$ contains $(e_{E_S} + \infty \cdot e_F) \times w^S$ and $(e_{\widehat{E}_S} + \infty \cdot e_F) \times w^S$, where $\widehat{E}_S = \bigcup_{a \in S} E_{S,a}$.
\end{lemma}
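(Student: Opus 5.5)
We produce each of the two vectors explicitly as a real combination of the generators $a_i \times e_{[n]\setminus i}$, $i \notin S$, of $A_S$, working modulo the tropical scaling $\RR$ in each factor and remembering that the coordinates indexed by $F \supseteq E_{S,0}$ and by $\{[n]\setminus a : a \in S\}$ are already $\infty$, so only the remaining coordinates need checking.

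\emph{Step 1: the generator sum.} Consider $s = \sum_{i \notin S} a_i \times e_{[n]\setminus i}$, which lies in $A_S$ because $A_S$ is a real span. In the second factor, its restriction to the finite coordinates $[n]\setminus i$ with $i \notin S$ is constant equal to $1$, hence trivial modulo $\RR$. Adding it to $w^S$ therefore leaves $w^S$ unchanged.

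\emph{Step 2: the first factor of $s$.} For $J \in \binom{[n]}{n-2}$, the coordinate $\big(\sum_{i\notin S} a_i\big)_J$ equals $|J^C \setminus S|$, since $J^C$ has two elements. This gives three cases.
\begin{itemize}
\item For $J \in E_S$ we have $J^C \cap S = \emptyset$, so the value is $2$.
\item For $J \in E_{S,a}$ we have $J^C = \{a, b\}$ with $b \notin S$, so the value is $1$.
\item For $J \in E_{S,0}$ the value is $0$, but these coordinates lie in $F$ and are $\infty$.
\end{itemize}
So, ignoring the coordinates in $F$, the first factor of $s$ equals $2e_{E_S} + e_{\widehat{E}_S}$. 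Since $E_S \cup \widehat{E}_S \cup E_{S,0}$ is everything, this is $e_{E_S} + (e_{E_S}+e_{\widehat E_S}) \equiv e_{E_S} + 1$ modulo $\infty\cdot e_F$. After scaling, $s$ yields $e_{E_S}$.

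\emph{Step 3: the second vector.} Because $A_S$ is a linear subspace, $-s$ also lies in $A_S$. Its first factor is $-(2e_{E_S}+e_{\widehat E_S})$. Adding $2$ to every finite coordinate gives $e_{\widehat E_S}$ on the complement of $F$. In the second factor, $-s$ again only shifts the finite coordinates by a constant.

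Finally, pairing each vector with the fixed infinite part $\infty\cdot e_F \times w^S$ gives both claimed points $(e_{E_S}+\infty\cdot e_F)\times w^S$ and $(e_{\widehat E_S}+\infty\cdot e_F)\times w^S$.

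The only delicate point is the bookkeeping in Step 2. We must check that $E_S$, $\widehat E_S$ and $E_{S,0}$ partition $\binom{[n]}{n-2}$, and that the quotients by $\RR$ are taken separately in each factor, so that the constant shifts in the two factors are independent. Once the $J$-coordinate of $\sum_{i \notin S} a_i$ is identified with $|J^C\setminus S|$, both statements follow from this partition.
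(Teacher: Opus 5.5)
Your proposal is correct and follows essentially the same approach as the paper. Both use the element $\sum_{i\notin S} a_i\times e_{[n]\setminus i}$ and observe that its second factor is constant on the finite coordinates of $w^S$. The second claim then comes from negating this element and rescaling.

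The only difference is in the first factor. You evaluate it directly as $|J^C\setminus S|$, whereas the paper rewrites it via $\psi_2(\sum_{i\in S}e_i)=-\sum_{i\in S}a_i$ and sums over $i\in S$. This is a harmless bookkeeping choice, and arguably cleaner.
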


\begin{proof}
    $A_S + (\infty \cdot e_F) \times w^S$ certainly contains the element 
    $$
    \sum_{i \notin S} (a_i \times e_{[n] \setminus i}) + (\infty \cdot e_F) \times w^S =  \psi_2 \times \psi_1(-\sum_{i \notin S} e_i) + (\infty \cdot e_F) \times w^S, 
    $$
    where $a_i = (-|\{i\} \cap J|)_J$, with $J$ running over $\binom{[n]}{n-2}$.
    Clearly 
    $$
    \psi_1(-\sum_{i \notin S} e_i)+w^S = (1, \ldots, 1) + w^S = w^S.
    $$ 
    Moreover 
    $$ 
    \psi_2(-\sum_{i \notin S} e_i) = \psi_2(\sum_{i \in S} e_i) = -\sum_{i \in S} a_i.
    $$  
    Now for $i \in S$
    $$ -a_i + \infty \cdot e_F = e_{E_S}+ \sum_{a \neq i} e_{E_{S,a}} + \infty \cdot e_F,$$
    hence
    \begin{align*}
        \psi_2(-\sum_{i \notin S} e_i) + \infty \cdot e_F 
        &= |S|e_{E_S} + (|S|-1)\sum_{a \in S}e_{E_{S,a}} +\infty \cdot e_F\\
        &= e_{E_S} +\infty \cdot e_F.
    \end{align*}
    This proves the first claim from which the second immediately follows.
\end{proof}

We now solve the problem illustrated in Example \ref{example: different chains give same cone} by restricting our attention to a smaller subset of cones. The cones in the strata corresponding to fixed $S$ will no longer be controlled by an arbitrary chain of flats in the matroid of $L^S(X)$. Instead they come from two smaller chains of flats in the matroids $\mathcal{M}_S$ and $\widehat{ \mathcal{M}}_S = \bigoplus_{a \in S} \mathcal{M}_{S,a}$. Then $\widehat{E}_S = \bigcup_{a \in S}E_{S,a} = \{J : |S \setminus J|=1 \}$ is the ground set of $\widehat{\mathcal{M}}_S$.

Given two chains of flats
\begin{align*}
    F_\bullet :&~ F_0 \subset \ldots \subset F_r \subset E_S, \\
    \widehat{F}_\bullet :&~ \widehat{F}_0 \subset \ldots \subset \widehat{F}_s \subset \widehat{E}_S
\end{align*}
in $\mathcal{M}_S$ and $\widehat{ \mathcal{M}}_S$ respectively, we define the cone
$$
    C_{F_\bullet,\widehat{F}_\bullet} = \infty \cdot (e_{E_{S,0}}+e_{F_0}+e_{\widehat{F}_0}) + \cone(e_{F_1}, \ldots, e_{F_r}, e_{\widehat{F}_1}, \ldots, e_{\widehat{F}_s}) + \RR(1, \ldots, 1).
$$
For technical reasons we also need to allow the \emph{exceptional} chains $F_\bullet: F_0 = E_S$ and $\widehat{F}_\bullet: \widehat{F}_0 = \widehat{E}_S$, but always require at least one of $F_\bullet$, $\widehat{F}_\bullet$ to be not of this form. 

We define the \emph{length} of $F_\bullet$ to be the number of flats occurring in the chain minus $2$. In particular the length of exceptional chains is $-1$. 
The latter is important to make dimensions of cones well behaved.

\begin{theorem}\label{theorem: bergman fan for trop short flags}
    The tropicalization of the short flag variety $\trop(\shortFlag)$ equals the union of cones 
    $$ C_{F_\bullet,\widehat{F}_\bullet}^S = A_S + C_{F_\bullet,\widehat{F}_\bullet} \times w^S ,$$ where $F_\bullet$ and $\widehat{F}_\bullet$ are (not both exceptional) chains of flats in $\mathcal{M_S}$ and $\widehat{\mathcal{M}}_S$ respectively and $S$ varies over proper subsets of $[n]$. These cones are in one to one correspondence to pairs of chains as above and the cones in a non empty stratum of $\trop(\shortFlag)$ form a fan.
\end{theorem}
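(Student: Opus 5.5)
We start from Theorem \ref{theorem: trop (degen) short flag}, which writes $\trop(\shortFlag)$ as the disjoint union over proper $S \subset [n]$ of $A_S + \trop(L^S(X)) \times w^S$. By Corollary \ref{corollary: trop L^S(X) is extended Bergman fan}, $\trop(L^S(X))$ is the extended Bergman fan of $\mathcal{M}(L^S(X))$. Proposition \ref{proposition: matroid of L^S(X) in cocodim 1} splits this matroid as $\mathcal{M}_S \oplus \mathcal{M}_{S,0} \oplus \widehat{\mathcal{M}}_S$, with $\mathcal{M}_{S,0}$ consisting of loops.

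The flats of a direct sum are products of flats of the summands. Every flat of $\mathcal{M}(L^S(X))$ is therefore $E_{S,0} \cup G \cup \widehat{G}$ with $G$ a flat of $\mathcal{M}_S$ and $\widehat{G}$ a flat of $\widehat{\mathcal{M}}_S$. Moreover, for a direct sum the Bergman fan equals the product of the Bergman fans of the summands, modulo a common lineality. Hence each cone $C_{G_\bullet}$ of the extended Bergman fan is contained in the cone built from the two projected chains $F_\bullet = G_\bullet \cap E_S$ and $\widehat{F}_\bullet = G_\bullet \cap \widehat{E}_S$, possibly after adding the generators $e_{E_S}$ or $e_{\widehat{E}_S}$. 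Concretely, a generator $e_{E_{S,0}\cup G\cup \widehat G}$ is a sum $e_G + e_{\widehat G}$ modulo the infinite coordinates, and such sums lie in $\cone(e_{F_\bullet}, e_{\widehat{F}_\bullet})$ once the chains are merged. The leftover generators $e_{E_S}$ and $e_{\widehat{E}_S}$ are not flats in the proper sense, since the top element is excluded. By Lemma \ref{lemma: A_S up to infinities} they are absorbed into $A_S$, because modulo $\RR(1,\dots,1)$ we have $e_{E_S} = -e_{\widehat{E}_S}$ on the finite coordinates. This gives $\trop(\shortFlag) = \bigcup C^S_{F_\bullet,\widehat F_\bullet}$. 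The exceptional chains cover the case where a whole summand sits at infinity, i.e. $F_0 = E_S$ or $\widehat F_0 = \widehat E_S$. Requiring that not both are exceptional matches the fact that $\emptyset$ cannot be the whole ground set, since $L^S(X) \neq 0$.

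Next comes injectivity of $(F_\bullet,\widehat F_\bullet) \mapsto C^S_{F_\bullet,\widehat F_\bullet}$. First, $S$ is recovered from the infinite coordinates in the second factor, namely $w^S$. Then $F_0$ and $\widehat{F}_0$ are recovered from the infinite coordinates in the first factor. For the remaining chains we quotient by $A_S$. The quotient of $\RR^{\binom{[n]}{n-2}}/\RR$ restricted to the finite coordinates by the image of $A_S$ is what the second factor leaves free after fixing $w^S$. Concretely, $A_S$ has dimension $n-|S|$, and its projection to $\RR^{\binom{[n]}{n-1}}/\RR$ restricted to the coordinates outside $S$ is injective. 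So an element of $A_S + C \times w^S$ determines its $A_S$-component by its second factor, and this reduces everything to the first factor. There one shows that the vectors $e_{F_i}$ and $e_{\widehat F_j}$, together with $(1,\dots,1)$, are linearly independent in the stratum. They are distinct nested indicator vectors living in two disjoint blocks, with no $F_i$ equal to $E_S$ and no $\widehat F_j$ equal to $\widehat E_S$. A standard Bergman-fan argument then recovers the chains from the relative interior of the cone through the level sets of a point's coordinates, which are computed separately on $E_S$ and on $\widehat E_S$. I expect this separation of the two blocks, after the $A_S$-ambiguity is removed, to be the main obstacle: the block-wise level sets are only determined up to the one extra relative shift between $E_S$ and $\widehat{E}_S$ that $A_S$ provides. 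That shift is exactly what is killed by disallowing $E_S$ and $\widehat E_S$ as proper flats, and the length convention $-1$ for exceptional chains makes the dimension count $\dim = (n-|S|-1) + (r+1) + (s+1)$ consistent.

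Finally, the fan property within a fixed stratum, i.e. fixed $S$, $F_0$ and $\widehat F_0$, follows from the above. Faces of $C^S_{F_\bullet,\widehat F_\bullet}$ are the cones of pairs of subchains, again with $A_S$ as lineality. The intersection of two such cones is the cone of the greatest common pair of subchains. This uses the fact that each point has a unique minimal pair of chains containing it, by the injectivity argument, exactly as for the ordinary Bergman fan of $\mathcal{M}_S \oplus \widehat{\mathcal M}_S$ modulo the lineality $A_S$.
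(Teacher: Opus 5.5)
Your route is the paper's. You split $\mathcal{M}(L^S(X))$ as a direct sum, project a chain $G_\bullet$ onto the blocks $E_S$ and $\widehat E_S$, absorb the leftover $e_{E_S}$, $e_{\widehat E_S}$ into $A_S$ via Lemma~\ref{lemma: A_S up to infinities}, and reduce injectivity to ordinary Bergman fans on the two blocks. For the inclusion of the cones into $\trop(\shortFlag)$ you use the product structure of Bergman fans of direct sums, where the paper writes down the concatenated chain $G_\bullet$; either works.

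One step, however, is false as stated. For $S\neq\emptyset$ the projection of $A_S$ to the finite part $\RR^{[n]\setminus S}/\RR$ of the second factor is not injective. You yourself count $\dim A_S=n-|S|$, which is one more than the dimension of the target. Explicitly, the element $\sum_{i\notin S}a_i\times e_{[n]\setminus i}$ of $A_S$ equals $(\psi_2\times\psi_1)(e_S)$ up to scaling. Its second factor is $(1,\dots,1)$, i.e.\ zero, on the finite coordinates, while its first factor is $\equiv e_{E_S}$ modulo scaling and infinite coordinates. This is exactly the content of Lemma~\ref{lemma: A_S up to infinities}, so the claim contradicts the absorption step you used one paragraph earlier. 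Hence a point of $C^S_{F_\bullet,\widehat F_\bullet}$ does not determine its $A_S$-component. Its first factor is determined only modulo $\RR e_{E_S}+\RR e_{\widehat E_S}$, not modulo $\RR(1,\dots,1)$ alone. The paper avoids this by lifting to $\RR^{\binom{[n]}{n-2}}\times\RR^{\binom{[n]}{n-1}}$, where the generators $a_i\times e_{[n]\setminus i}$ do have independent second components. The price is that it must carry the two scaling lines $\RR(1,\dots,1)\times 0$ and $0\times\RR(1,\dots,1)$ along, and rewrite them via the lemma as $\RR e_{E_S}\times 0+\RR e_{\widehat E_S}\times 0$.

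With this corrected statement, the ``main obstacle'' you anticipate disappears. The set $\cone(e_{F_\bullet},e_{\widehat F_\bullet})+\RR e_{E_S}+\RR e_{\widehat E_S}$ is the product of $\cone(e_{F_\bullet})+\RR e_{E_S}$ and $\cone(e_{\widehat F_\bullet})+\RR e_{\widehat E_S}$ on the disjoint coordinate sets $E_S\setminus F_0$ and $\widehat E_S\setminus \widehat F_0$. So containment of two such cones splits blockwise into containments of ordinary Bergman cones, which is the paper's ``follows just like for usual Bergman fans''. Your blockwise level-set argument then goes through unchanged, because level sets are invariant under adding a constant on each block. The strict inclusions $F_r\subset E_S$ and $\widehat F_s\subset\widehat E_S$ ensure that these shifts never identify distinct chains.

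Finally, your dimension count is off by one. Since $e_{E_S}+e_{\widehat E_S}=(1,\dots,1)$ vanishes in the first factor, the correct count is $(n-|S|-1)+(\text{length}(F_\bullet)+1)+(\text{length}(\widehat F_\bullet)+1)-1=n-|S|+\text{length}(F_\bullet)+\text{length}(\widehat F_\bullet)$, as in the paper. Your formula would give $2n-2$ instead of $\dim\shortFlag=2n-3$ for $S=\emptyset$ and a maximal chain.
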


\begin{proof}
    Fix an arbitrary proper subset $S \subset [n]$. Given chains of flats \begin{align*}
    F_\bullet :&~ F_0 \subset \ldots \subset F_r \subset E_S \\
    \widehat{F}_\bullet :&~ \widehat{F}_0 \subset \ldots \subset \widehat{F}_s \subset \widehat{E}_S,
\end{align*} in $\mathcal{M}_S$ and $\widehat{ \mathcal{M}}_S$, we define a third chain $G_\bullet$ via 
$$
G_i = \begin{cases}
    E_{S,0} \cup F_i \cup \widehat{F}_0 & \text{ if } i \leq r, \\
    E_{S,0} \cup E_S \cup \widehat{F}_{i-r} & \text{ if } r < i \leq r+s.
\end{cases}
$$
Note that $G_\bullet$ is a chain of flats in $\mathcal{M}(L^S(X))$. Using Lemma \ref{lemma: A_S up to infinities} it is readily verified that 
$$ 
A_S + C_{G_\bullet} \times w^S = A_S + C_{F_\bullet,\widehat{F}_\bullet} \times w^S .
$$ 
The special case where $F_\bullet$ is exceptional is handled the same way. If $\widehat{F}_\bullet$ is exceptional, just swap the roles of $F_\bullet$, $\widehat{F}_\bullet$ and exchange $E_S$ with $\widehat{E}_S$ when defining $G_\bullet$.
This shows the cones we are interested in lie inside $\trop(\shortFlag)$. For the other inclusion take any chain of flats $G_\bullet = G_0 \subset \ldots \subset G_k \subset [n]$ inside $\mathcal{M}(L^S(X))$. We look for chains of flats $F_\bullet$ and $\widehat{F}_\bullet$ in $\mathcal{M}_S$ and $\widehat{ \mathcal{M}}_S$ such that 
$$
A_S + C_{G_\bullet} \times w^S \subseteq A_S + C_{F_\bullet, \widehat{F}_\bullet} \times w^S.
$$
Because of Proposition \ref{proposition: matroid of L^S(X) in cocodim 1}, each $G_i$ is of the form $E_{S,0} \cup (G_i\cap E_S) \cup (G_i \cap \widehat{E}_S)$ and $G_i\cap E_S$ is a flat in $\mathcal{M}_S$ while $G_i \cap \widehat{E}_S$ is a flat in $\widehat{\mathcal{M}}_S$. Collecting all $G_i\cap E_S$ and $G_i \cap \widehat{E}_S$ gives chains of flats $F_\bullet$, $\widehat{F}_\bullet$ in $\mathcal{M}_S$ and $\widehat{ \mathcal{M}}_S$, of which at least one is not exceptional. Clearly $C_{G_\bullet} \subseteq C_{F_\bullet,\widehat{F}_\bullet}$ from which the desired inclusion follows. Hence $\trop(\shortFlag)$ is a union of cones as claimed.

The non empty strata of $\trop(\shortFlag)$ arise by fixing $S, F_0$ and $\widehat{F}_0$ which effectively determines the infinite coordinates. 
Given $F^1_\bullet$ and $\widehat{F}^1_\bullet$ as well as $F^2_\bullet$ and $\widehat{F}^2_\bullet$ compatible with these choices we claim 
$$ 
A_S + C_{F^1_\bullet, \widehat{F}^1_\bullet} \times w^S ~\subseteq~
A_S + C_{F^2_\bullet, \widehat{F}^2_\bullet} \times w^S
$$ 
if and only if $F^1_\bullet$ is a subchain of $F^2_\bullet$ and $\widehat{F}^1_\bullet$ is a subchain of $\widehat{F}^2_\bullet$. In particular we have a one to one correspondence between pairs of chains and cones. The if implication is trivial. For the only if direction we pass to the preimage of the given stratum in $(\barRR^{\binom{[n]}{n-2}} \setminus \infty) \times (\barRR^{\binom{[n]}{n-1}} \setminus \infty)$. This preimage can be written as 
$$
\RR^{\binom{[n]}{n-2}} \times \RR^{\binom{[n]}{n-1}} + v \times w^S, \text{ where }~ v = \infty \cdot (e_{E_{S,0}} + e_{F_0} + e_{\widehat{F}_0}).
$$
Put $\1_1 = (1, \ldots, 1) \times (0, \ldots, 0)$ and $\1_2 = (0, \ldots, 0) \times (1, \ldots, 1)$. Lemma \ref{lemma: A_S up to infinities} implies 
$$
A_S + \RR\1_1 + \RR\1_2 + v \times w^S = A_S + \RR e_{E_S} \times 0 + \RR e_{\widehat{E}_S} \times 0 + v \times w^S.
$$
Hence our hypothesis is equivalent to
$$
A_S + \cone(e_{F_1^1}, \ldots, e_{F_r^1}, e_{\widehat{F}^1_1}, \ldots, e_{\widehat{F}^1_s}) \times 0 + \RR e_{E_S} \times 0 + \RR e_{\widehat{E}_S} \times 0 + v \times w^S
$$ 
being contained in 
$$
A_S + \cone(e_{F_1^2}, \ldots, e_{F_k^2}, e_{\widehat{F}^2_1}, \ldots, e_{\widehat{F}^2_l}) \times 0 + \RR e_{E_S} \times 0 + \RR e_{\widehat{E}_S} \times 0 + v \times w^S.
$$
Since $A_S = \langle a_i \times e_{[n] \setminus i} : i \notin S \rangle_\RR$ and 
$w^S = \infty \cdot \sum_{i \in S}e_{[n] \setminus i}$, this implies 
$$
\cone(e_{F_1^1}, \ldots, e_{F_r^1}, e_{\widehat{F}^1_1}, \ldots, e_{\widehat{F}^1_s}) + \RR e_{E_S} + \RR e_{\widehat{E}_S} + v
$$
is a subset of
$$
\cone(e_{F_1^2}, \ldots, e_{F_k^2}, e_{\widehat{F}^2_1}, \ldots, e_{\widehat{F}^2_l}) + \RR e_{E_S} + \RR e_{\widehat{E}_S} + v.
$$
Now the claim follows just like for usual Bergman fans.

We leave it to the reader to check that the collection of all cones in our fixed stratum indeed forms a fan. More precisely faces of cones correspond to pairs of subchains while intersections of cones correspond to taking pairs of greatest common subchains. The lineality space of this fan is $A_S + v \times w^S$.
\end{proof}

The dimension of a cone $C_{F_\bullet,\widehat{F}_\bullet}^S$ coming from a pair of chains $F_\bullet$, $\widehat{F}_\bullet$ in $\mathcal{M}_S$ and $\widehat{\mathcal{M}}_S$ can be read off as the dimension of $A_S$ plus the lengths of $F_\bullet$ and $\widehat{F}_\bullet$. Indeed, passing to the preimage in 
$$
\RR^{\binom{[n]}{n-2}} \times \RR^{\binom{[n]}{n-1}} + v \times w^S, \text{ with }~ v = \infty \cdot (e_{E_{S,0}} + e_{F_0} + e_{\widehat{F}_0}),
$$
the cone becomes 
$$
A_S + \cone(e_{F_1}, \ldots, e_{F_r}, e_{\widehat{F}_1}, \ldots, e_{\widehat{F}_s}) \times 0 + \RR e_{E_S} \times 0 + \RR e_{\widehat{E}_S} \times 0 + v \times w^S.
$$
If one of the chains is exceptional, then either $e_{E_S}$ or $e_{\widehat{E}_S}$ is completely covered by infinities. 
Other than that the generators are linearly independent. Accounting for the two projective degrees of freedom, and recalling that exceptional chains are of of length $-1$, we obtain 
$$
\dim C_{F_\bullet,\widehat{F}_\bullet}^S = n-|S| + \text{length}(F_\bullet) + \text{length}(\widehat{F}_\bullet).
$$

As expected, the cones of maximal dimension sit in the big open stratum where all coordinates are finite. Note $\mathcal{M}_\emptyset \cong \mathcal{M}(K_n)$ and $\widehat{\mathcal{M}}_\emptyset = \emptyset$ by Theorem \ref{theorem: matroid of L(X)}. The only chain in $\widehat{\mathcal{M}}_\emptyset$ is the exceptional one, while a maximal chain in $\mathcal{M}_\emptyset$ has length $n-2$. We conclude the dimension of a maximal cone in $\trop(\shortFlag)$ is $2n-3$. This of course coincides with the dimension of $\shortFlag$ as variety.

\begin{remark}
    Replacing cones with their closure in $\TT \PP^{\binom{[n]}{n-2}} \times \TT \PP^{\binom{[n]}{n-1}}$ 
yields abstract fan structures inside the collection of strata for fixed $S$, but otherwise unrestricted chains.
That is inside the collection of strata $\trop (\mathcal{O}_F \times \mathcal{O}_S)$, where $\mathcal{O}_F$ are $\mathcal{O}_S$ defined as in Theorem \ref{theorem: trop (degen) short flag} and $F$ runs over the flats in $\mathcal{M}(L^S(X))$. \\
\end{remark}

Next, we have a closer look at $\trop( \shortFlag^R)$. Recall that by Theorem \ref{theorem: trop (degen) short flag} 
\begin{align*}
    \trop( \shortFlag^R) = 
    &\bigcup_{R^C \subseteq S \subset [n]} ~ \trop (\Gr(n-2,n)) \times (\RR^{\binom{[n]}{n-1}}/\RR + w^S) ~~ \cup \\
    &\bigcup_{R^C \nsubseteq S \subset [n]} ~ A_S + \trop(L^{R \cup S}(X)) \times w^S.
\end{align*}

Let $\Gr^0(n-2,n) \cong \Gr^0(2,n)$ denote the open locus where none of the Plücker coordinates vanish. It is well known, that the fan structure of $\trop \Gr^0(n-2,n)$ is controlled by the combinatorics of phylogenetic trees \cite[4.3]{MS15}, \cite{PS05}. These are trees with $n$ labeled leaves and no vertices of degree $2$. 
For any rank $2$ matroid $M$ on $[n]$, let $\Gr_M$ denote the \emph{realization space} of $M$, i.e. the locus of $X \in \Gr(2,n)$ such that the the linear matroid associated to $X$ is $M$. In this language $\Gr^0(n-2,n) = \Gr_{U_{2,n}}$.
The strata of $\trop \Gr(2,n)$ are naturally of the form $\trop \Gr_M$ and in \cite{Cueto20}, it is shown that the cones in $\trop \Gr_M$ are described by phylogenetic trees as well. Here the main difference is that now leaves need to be multilabeled. More precisely, after removing the loops of $M$, each leaf is labeled by a flat of rank one, essentially to keep track of parallels.
The main ingredient needed to understand these results appears as Proposition C.3. in \cite{Cor21} which is contributed by the author of \cite{Cueto20}.

Now let us focus on the parts $A_S + \trop(L^{R \cup S}(X)) \times w^S$ where $R^C \nsubseteq S$. We can describe the cones in this part completely analogous to the non degenerate case. For this we only need adjust the matroids $\mathcal{M}$ and $\widehat{\mathcal{M}}$. Let
\begin{align*}
    \mathcal{M}_S^R &= \mathcal{M}_{R \cup S} \oplus 
\bigoplus_{a \in R \setminus S} \mathcal{M}_{R \cup S,a} , \\ 
\widehat{\mathcal{M}}_S^R &= \bigoplus_{a \in S} \mathcal{M}_{R \cup S,a}.
\end{align*}
Denote their ground sets by $E_S^R$ and $\widehat{E}_S^R$ and observe that we have the inclusions $E_S^R \subseteq E_S, \widehat{E}_S^R \subseteq \widehat{E}_S$ and $E_{R \cup S, 0} \supseteq E_{S,0}$. Thus for any $E_{R \cup S,0} \subseteq F \subset \binom{[n]}{n-2}$ we obtain 
$$
e_{E_S} + \infty \cdot e_F = e_{E_S^R} + \infty \cdot e_F 
~\text{ and }~
e_{\widehat{E}_S} + \infty \cdot e_F = e_{\widehat{E}_S^R} + \infty \cdot e_F.
$$
Thus, in Lemma \ref{lemma: A_S up to infinities}, we can replace $E_S$ with $E_S^R$, $\widehat{E}_S$ with $\widehat{E}_S^R$ and $E_{S,0}$ with $E_{R \cup S,0}$. With the obvious modifications, the argument in the proof of Theorem \ref{theorem: bergman fan for trop short flags} immediately generalizes showing that $A_S + \trop(L^{R \cup S}(X)) \times w^S$ consists of cones 
$$
C_{F_\bullet,\widehat{F}_\bullet}^{R,S} = A_S + C_{F_\bullet,\widehat{F}_\bullet}\times w^S,
$$ 
parameterized by chains of flats $F_\bullet$ in $\mathcal{M}_S^R$ and $\widehat{F}_\bullet$ in $\widehat{\mathcal{M}}_S^R$ respectively. 

In any case every stratum of $\trop( \shortFlag^R) $ has the structure of a fan and the maximum over the dimensions of occurring cones must coincide with $\dim \shortFlag^R$.

\begin{corollary}
    Suppose $R \subseteq [n]$. The dimension of $\shortFlag^R$ is $\max(2(n-2)+ |R|-1, ~ 2n-3)$.
\end{corollary}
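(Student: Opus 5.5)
Since $K$ is algebraically closed and $\val$ is nontrivial, the tropicalization of an irreducible variety is pure of the same dimension as the variety (Bieri--Groves / structure theorem in \cite{MS15}). Hence $\dim \shortFlag^R$ is the maximum of the dimensions of the cones in the strata of $\trop(\shortFlag^R)$, and I will just read these off from Theorem \ref{theorem: trop (degen) short flag}. Strictly, $\shortFlag^R$ need not be irreducible, but each irreducible component tropicalizes to a pure complex of its own dimension. So the maximal cone dimension still equals the dimension of the variety, and this is enough.

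\textbf{The Grassmannian part.} For $R^C \subseteq S \subset [n]$, the pieces are $\trop(\Gr(n-2,n)) \times (\RR^{\binom{[n]}{n-1}}/\RR + w^S)$. Their dimension is
\[
\dim \Gr(n-2,n) + (n-|S|-1) = 2(n-2) + n-|S|-1 .
\]
This is largest for the smallest admissible $S$, namely $S = R^C$, which gives $2(n-2) + |R| - 1$. The piece exists only when $R^C$ is a proper subset, i.e. $R \neq \emptyset$. When $R = \emptyset$ this part is absent, and the value $2(n-2)-1$ is below $2n-3$ anyway, so the formula is unaffected.

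\textbf{The remaining part.} For $R^C \nsubseteq S$, I use the cone description $C^{R,S}_{F_\bullet,\widehat F_\bullet}$, with dimension formula
\[
\dim = n-|S| + \mathrm{length}(F_\bullet) + \mathrm{length}(\widehat F_\bullet),
\]
adapted from the nondegenerate case. Maximal chains have lengths $\operatorname{rk}(\mathcal{M}^R_S)-1$ and $\operatorname{rk}(\widehat{\mathcal{M}}^R_S)-1$. These ranks sum to $\operatorname{rk}\mathcal{M}(L^{R\cup S}(X)) = n-1$, since $L^{R\cup S}(X)$ is linear of dimension $n-2$ by Theorem \ref{theorem: L^S(X) = L(X^S) in cocodim 1}. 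This gives the bound $n-|S|+n-3$ when both chains are nonexceptional. If one chain is exceptional, a rank-zero summand contributes length $-1$, so the total is at most this value again. The maximum is attained at $S=\emptyset$ (note $R^C \nsubseteq \emptyset$ needs $R \neq [n]$). There the dimension is $2n-3$, with $\widehat{\mathcal{M}}^R_\emptyset$ empty and the exceptional chain contributing $-1$. This matches the count $n - 0 + (n-2) + (-1) = 2n-3$.

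\textbf{Main obstacle.} The hardest step is checking the dimension formula for the degenerate cones carefully. I need the generators $A_S$, the $e_{F_i}$, the $e_{\widehat F_j}$, $e_{E_S^R}$ and $e_{\widehat E_S^R}$ to be linearly independent modulo the two scalings, exactly as in the nondegenerate discussion. I also need to handle the edge cases $R=[n]$, where only the Grassmannian part survives and the answer is $3n-6 \geq 2n-3$ for $n \ge 3$, and $R=\emptyset$. Taking the maximum of the two parts then yields $\max(2(n-2)+|R|-1,\,2n-3)$.
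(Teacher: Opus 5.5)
Your proposal is correct and follows the paper's proof essentially step for step: maximize $2(n-2)+n-|S|-1$ over the Grassmannian strata (attained at $S=R^C$, using that tropicalization preserves dimension) and $n-|S|+n-3$ over the remaining strata (attained at $S=\emptyset$, via the lengths of maximal chains in $\mathcal{M}^R_S$ and $\widehat{\mathcal{M}}^R_S$). One small arithmetic slip: for $R=[n]$ the value is $2(n-2)+n-1=3n-5$, not $3n-6$; this still exceeds $2n-3$, so the conclusion is unaffected.
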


\begin{proof}
    We may assume $\emptyset \neq R$, as the nondegenerate case is clear.
    It is easy to see that whenever $R^C \not \subseteq S$, the parts $A_S + \trop(L^{R \cup S}(X)) \times w^S$ contain cones of maximal dimension $n-|S|+n-3$. Indeed, as in the non degenerate case, one only needs to add up the lengths of two maximal chains in $\mathcal{M}_S^R$ and $\widehat{\mathcal{M}}_S^R$ respectively. 
    As for cones coming from $\trop (\Gr(n-2,n)) \times (\RR^{\binom{[n]}{n-1}}/\RR + w^S)$ with $R^C \subseteq S$, we use that tropicalization preserves dimension. Then the maximal dimension of such cones is $2(n-2) + n - |S|-1$. Now for each case choose $S$ minimal to arrive at the conclusion.
\end{proof}

Consider the case $n=4$. Here the dimension of $\shortFlag^R$ is $5,5,5,6,7$, when $R$ increases in cardinality from $0$ to $4$.
More generally the dimension of $\shortFlag^R$ remains $2n-3$ for $|R| \leq 2 $, and then starts increasing. 
These jumps in dimension are to be expected: Indeed in the language of \cite{CFFFM19} the linear map $\pr_R$ is part of the flat locus iff $|R| \leq 2$, and all elements in the flat locus must have fibers of the same dimension. In particular $\shortFlag^R$ and $\shortFlag$ are of the same dimension. For $|R| > 2$ we leave the flat locus thus allowing the dimension to increase. 

\appendix
\setcounter{secnumdepth}{0}

\section{Appendix}
Let $d \leq n-2$. We state adjusted versions of \cite[Lemma 30, 31 and 32]{JMRS}, which are needed to prove Theorem \ref{theorem: algebraic statement}. Their proof as well as their use is completely analogous as in \cite{JMRS}.

\begin{lemma}
    Let $A,B,C \subseteq [n]$ with $|A| = d-1$ and $|B| = |C| = d+1$ and $S \subset [n]$. Suppose $a \in A \setminus (C \cup S)$. For $i \in B \setminus (A \setminus a)$ and $j \in (C \setminus A)a$ with $j \neq i$, denote
    \begin{align*}
        \varphi_i &= |A \cap [i]| + |B \cap [i]| - (~|Ai \setminus a \cap [a]| + |Ca \cap [a]|~) \\
        \psi_j &= \varphi_i + | (A \setminus a)i \cap [j]| + |Ca \cap[j]| - (~ |(A \setminus a)j \cap [i] | + |B\cap [i]| ~).
    \end{align*}
    Then $(-1)^\psi_j$ is independent of $i$, and
    $$
    \sum_{i \in B\setminus (A \setminus a)}(-1)^{\varphi_i}P_{B \setminus i} \cdot I^S_{(A\setminus a)i, Ca} =
    R_{A,B} \cdot Q_C + \sum_{j \in C \setminus (A \cup S)} (-1)^{\psi_j} R_{(A \setminus a)j,B} \cdot Q_{Ca \setminus j}.
    $$
\end{lemma}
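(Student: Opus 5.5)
The identity is polynomial in the variables $P_J$ and $Q_I$, so I will prove it by comparing the coefficients of each monomial $P_{J}P_{J'}Q_{I}$ on both sides. The plan is to follow the structure of the proof of \cite[Lemma 30]{JMRS}, which is the case $S=\emptyset$, and track exactly which terms the restriction to $B\setminus(A\cup S)$ removes.

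First, I expand the left hand side using \eqref{equation: S-incidence relations}. The relation $I^S_{(A\setminus a)i,Ca}$ is a sum over $j\in Ca\setminus((A\setminus a)i\cup S)$ of terms $\pm Q_{Ca\setminus j}P_{(A\setminus a)ij}$. Since $a\notin S$ and $a\notin C$, the index $j=a$ always survives. This produces the term $\pm Q_C P_{B\setminus i}P_{Ai}$, and summing over $i$ gives $R_{A,B}Q_C$. For this I must check that the sign $\varphi_i$ plus the sign of the $j=a$ term reproduces the Plücker sign $|[i]\cap A|+|[i]\cap B|$. Note that $i=a$ contributes zero to $R_{A,B}$, since $P_{Aa}$ is degenerate. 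On the left, the $i=a$ term has $j\neq a$, so it is a genuine term; it goes into the remaining sum, exactly as in \cite{JMRS}. The remaining terms, with $j\in C\setminus(A\cup S)$ and $j\neq i$, are grouped by $j$. For fixed $j$ they read $\sum_i \pm P_{B\setminus i}P_{(A\setminus a)ji}\,Q_{Ca\setminus j}$, and this sum over $i\in B\setminus (A\setminus a)j$ is, up to the global sign $(-1)^{\psi_j}$, the Plücker relation $R_{(A\setminus a)j,B}$.

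Second, I account for $S$. The only place $S$ enters is the range of $j$. Because $a\notin S$, the $j=a$ term is never removed, and the terms removed on the left are exactly those with $j\in C\cap S$. On the right these correspond precisely to the summands $R_{(A\setminus a)j,B}Q_{Ca\setminus j}$ that are excluded by summing over $C\setminus(A\cup S)$. The Plücker relations themselves are not degenerated, so no other terms change. The identity for general $S$ therefore follows from the $S=\emptyset$ identity by deleting matching groups of terms on both sides.

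The main obstacle is the sign bookkeeping: showing that $(-1)^{\psi_j}$ does not depend on $i$, and that it matches the Plücker sign of $R_{(A\setminus a)j,B}$. I would reduce this to parity counts. Compare $|(A\setminus a)i\cap[j]|$ with $|(A\setminus a)j\cap[i]|$: they differ by $[i<j]-[j<i]$ modulo $2$, and this difference combines with the $|B\cap[i]|$ terms in $\varphi_i$ and $\psi_j$ so that all $i$-dependence cancels. This sign verification is exactly the one carried out in \cite{JMRS}, and I would cite it there.
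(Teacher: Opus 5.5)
Your proposal is correct and follows the route the paper intends; the paper gives no argument of its own beyond deferring to \cite{JMRS}. The $k=a$ terms of $I^S_{(A\setminus a)i,Ca}$ survive because $a\notin S$, and they assemble to $R_{A,B}Q_C$ directly from the definition of $\varphi_i$. The remaining terms regroup by $j$ into $(-1)^{\psi_j}R_{(A\setminus a)j,B}Q_{Ca\setminus j}$ directly from the definition of $\psi_j$, and $S$ only trims the range of $j$.

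The one flaw is your parity sketch for the independence of $(-1)^{\psi_j}$ from $i$, which is the only nontrivial content of the lemma. Write $A'=A\setminus a$. Then $|A'i\cap[j]|$ and $|A'j\cap[i]|$ differ by $|A'\cap[j]|-|A'\cap[i]|+[i<j]-[j<i]$, not by $[i<j]-[j<i]$. Moreover, the $|B\cap[i]|$ terms cancel identically by the definition of $\psi_j$. What actually absorbs the $i$-dependent term $|A'\cap[i]|$ are the two remaining terms of $\varphi_i$, namely $|A\cap[i]|=|A'\cap[i]|+[a\le i]$ and $|Ai\setminus a\cap[a]|=|A'\cap[a]|+[i<a]$. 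These identities hold for every admissible $i$, including $i=a$, where $A'i=A$ but $Ai\setminus a=A'$. Together with $|A'i\cap[j]|=|A'\cap[j]|+[i<j]$ and $|A'j\cap[i]|=|A'\cap[i]|+[j<i]$ they give
\[
\psi_j\equiv \bigl([a\le i]+[i<a]\bigr)+\bigl([i<j]+[j<i]\bigr)+c_j\equiv c_j \pmod{2},\qquad c_j=|A'\cap[a]|+|Ca\cap[a]|+|A'\cap[j]|+|Ca\cap[j]|.
\]
Since $c_j$ does not involve $i$, this settles the independence directly; there is no need to rely on the citation for it.
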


\begin{lemma}
    Let $A, C \subseteq [n]$ with $|A| = d-1$ and $|C| = d+1$. Furthermore assume $S \subseteq C$. Then we have $R_{A,C} \in (\mathcal{I}_{in}^S : \langle Q_C \rangle^\infty)$.
\end{lemma}

\begin{lemma}
    Let $A,B,C \subseteq [n]$ with $|A| = d-1$, $|B| = |C| = d+1$, and $A \subset C$. Moreover let $S \subset [n]$. Let $b \in B \setminus (C \cup S)$ and $\beta = |[b] \cap B| + |[b] \cap C| + 1$. Furthermore for $i \in B \setminus A$, and $j \in C \setminus B$, denote $\varphi_i = |[i] \cap A| + |[i] \cap B| + |[b] \cap B \setminus i| + |[b] \cap C|$ and
    $\psi_j = |[j] \cap B| + |[b] \cap B| + |[j] \cap C| + |[b] \cap C|$.
    Then
    \begin{align*}
        R_{A,B} \cdot Q_C + \sum_{j \in C \setminus (B \cup S)} (-1)^{\psi_j} R_{A,(B \setminus b)j} \cdot Q_{Cb \setminus j} =& \\
        (-1)^\beta P_{B \setminus b} \cdot I_{A,Cb}^S &+ \sum_{\substack{i \in B \setminus A \\ i \neq b}}(-1)^{\varphi_i}P_{Ai} \cdot I_{B \setminus ib, Cb} ~. 
    \end{align*}
\end{lemma}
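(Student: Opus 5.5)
The plan is to reduce to the non-degenerate identity and then account for everything that depends on $S$. For $S=\emptyset$ the statement is exactly \cite[Lemma 32]{JMRS}. In that case $I^\emptyset_{A,Cb}=I_{A,Cb}$ and the left-hand sum runs over all $j\in C\setminus B$. I take that identity as given, with the same sign conventions $\beta$, $\varphi_i$, $\psi_j$. These signs do not depend on $S$, so they carry over verbatim.

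Subtracting the stated identity from the $S=\emptyset$ one, the claim becomes equivalent to a correction identity. On the left, only the summands with $j\in (C\setminus B)\cap S$ are dropped. On the right, the last sum is literally unchanged, since it already uses the full relations $I_{B\setminus ib,Cb}$. The first term changes by $(-1)^\beta P_{B\setminus b}\,(I_{A,Cb}-I^S_{A,Cb})$. By \eqref{equation: S-incidence relations}, and since $b\notin S$, this difference equals
\begin{equation*}
(-1)^\beta P_{B\setminus b}\sum_{k\in (C\setminus A)\cap S}(-1)^{|[k]\cap A|+|[k]\cap Cb|}\,Q_{Cb\setminus k}\,P_{Ak}.
\end{equation*}
So it remains to prove
\begin{equation*}
\sum_{j\in (C\setminus B)\cap S}(-1)^{\psi_j}R_{A,(B\setminus b)j}\,Q_{Cb\setminus j}=(-1)^\beta P_{B\setminus b}\sum_{k\in (C\setminus A)\cap S}(-1)^{|[k]\cap A|+|[k]\cap Cb|}Q_{Cb\setminus k}P_{Ak}.
\end{equation*}

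Both sides are trilinear in the form $Q\cdot P\cdot P$, so I compare coefficients monomial by monomial. I group the monomials by their $Q$-factor $Q_{Cb\setminus k}$ with $k\in C\cap S$, and expand each $R_{A,(B\setminus b)j}$ into its terms $P_{Ai}P_{(B\setminus b)j\setminus i}$.

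\emph{Case $k=j\in C\setminus B$.} The summand of $R_{A,(B\setminus b)j}$ with $i=j$ is $\pm P_{Aj}P_{B\setminus b}$. This should match the $k=j$ term on the right once the exponents $\psi_j$ and $\beta+|[j]\cap A|+|[j]\cap Cb|$ are reconciled.

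\emph{Remaining monomials.} These are the terms $P_{Ai}P_{(B\setminus b)j\setminus i}$ with $i\neq j$, together with the right-hand terms having $k\in (B\cap C\setminus A)\cap S$. They must cancel or match among themselves.

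This last point is the main obstacle. A priori the leftover $i\neq j$ terms of the Plücker relations have nothing to pair with. Since $A\subset C$, these terms are $P_{Ai}P_{(B\setminus b)j\setminus i}$ with $i\in B\setminus A$, $i\neq b$.

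The first thing I would try is to avoid the difference approach altogether. Instead I would redo the \cite{JMRS} proof of Lemma 32 with $S$ present: expand both sides of the stated identity fully and verify the coefficient of every monomial $Q_D P_E P_F$ directly. The key observation to check is that the terms of the full relations $I_{B\setminus ib,Cb}$ indexed by $k\in S$ are exactly what absorbs those leftover Plücker terms. Each such $k$-term has the form $Q_{Cb\setminus k}P_{(B\setminus ib)k}$. These should cancel the $i\neq j$ terms $P_{Ai}P_{(B\setminus b)j\setminus i}$ of the $R$'s for $j\in S$, which are missing on the left. The pairing is via the exchange $i\leftrightarrow k$, and the exchange should be sign-reversing given the definitions of $\varphi_i$ and $\psi_j$.

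I expect the sign bookkeeping, namely comparing $\varphi_i+|[k]\cap(B\setminus ib)|+|[k]\cap Cb|$ with $\psi_j$ plus the Plücker sign, to be the bulk of the work. It follows the same parity manipulations as in \cite{JMRS}. If some monomial with $k\in B\cap C\cap S$ turns out to be unmatched, that would show the identity needs the hypothesis $S\cap B\cap C\subseteq A$ or similar. I would then check whether the applications in Theorem \ref{theorem: algebraic statement} satisfy it.
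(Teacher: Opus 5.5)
Your subtraction from \cite[Lemma 32]{JMRS} is set up correctly, but it settles the question in the negative. With the full relations $I_{B\setminus ib,Cb}$ in the last sum, the identity is false. So the ``main obstacle'' cannot be overcome. Redoing the \cite{JMRS} computation directly cannot help either: once Lemma 32 is granted, it is logically equivalent to your correction identity.

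To see the failure, sort all terms by their $Q$-factor; each is $Q_{Cb\setminus k}$ for some $k\in Cb$. For $k\in S\cap C$, the variable $Q_{Cb\setminus k}$ occurs neither on the left nor in $P_{B\setminus b}I^S_{A,Cb}$. Yet the last sum contributes $\sum_i \pm P_{Ai}P_{(B\setminus ib)k}$ to it. The sum runs over all $i\in(B\setminus b)\setminus A$ if $k\notin B$, and only over $i=k$ if $k\in B\setminus A$. This is in general a nonzero polynomial.

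These $k\in S$ terms are exactly the ones you hoped would ``absorb'' the missing $i\neq j$ Plücker terms. But they are the very same monomials, present on the right and absent on the left. So they are the obstruction, not the remedy.

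Concretely, take $d=2$, $C=123$, $A=1$, $B=245$, $b=4$, $S=\{2\}$. The summand $P_{12}I_{5,1234}$ contributes $\pm P_{12}P_{25}Q_{134}$. The left side involves only $Q_{123},Q_{234},Q_{124}$, and $P_{25}I^S_{1,1234}$ contains no $Q_{134}$. Your fallback hypothesis $S\cap B\cap C\subseteq A$ does not rescue the statement. With $S=\{3\}$ in the same example, the $Q_{124}$-coefficient on the right is $\pm P_{12}P_{35}\pm P_{15}P_{23}\neq 0$, while the left side has no $Q_{124}$. Moreover, the application only provides $S\subseteq C$.

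The missing idea is that the last sum has to be $\sum_i(-1)^{\varphi_i}P_{Ai}\,I^S_{B\setminus ib,Cb}$. This is also what the lemma is needed for: in the saturation argument behind Theorem~\ref{theorem: algebraic statement}, the right-hand side must lie in $\mathcal{I}^S_{in}$. For this corrected statement the proof is immediate, which is what the paper's ``completely analogous'' amounts to. Apply to \cite[Lemma 32]{JMRS} the ring homomorphism that fixes all $P_J$ and all $Q_I$, except that it sends $Q_{Cb\setminus k}\mapsto 0$ for $k\in S\cap C$. Every term is linear in a single $Q_{Cb\setminus k}$, and $b\notin S$. Hence the factor $Q_C$ survives, every $I_{A',Cb}$ becomes $I^S_{A',Cb}$, and the left-hand summands with $j\in S$ vanish. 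The signs are untouched.
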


\bibliographystyle{plain}
\bibliography{shortflags.bib}

\end{document}